  \let\original@@tocwrite=\@tocwrite
  \newif\ifAHVflag
  \def\jlreq@uniqtoken{\jlreq@uniqtoken}
  \def\jlreq@endmark{\jlreq@endmark}
  \long\def\jlreq@getfirsttoken#1#{\jlreq@getfirsttoken@#1\bgroup\jlreq@endmark}
  \long\def\jlreq@getfirsttoken@#1#2\jlreq@endmark#3\jlreq@endmark{#1}
  \renewcommand{\@tocwrite}[2]{%
    \begingroup
      \AHVflagfalse % true if section is not empty
      \@ifempty{#2}{}{%
        \expandafter\expandafter\expandafter\ifx\jlreq@getfirsttoken#2\jlreq@uniqtoken{}\jlreq@endmark\Sectionformat\expandafter\@firstoftwo\else\expandafter\@secondoftwo\fi
        {%
          \def\Sectionformat##1##2{\@ifempty{##1}{}{\AHVflagtrue}}%
          #2
        }{\AHVflagtrue}%
      }%
      \def\@tempa{}%
      \ifAHVflag\def\@tempa{\original@@tocwrite{#1}{#2}}\fi
    \expandafter\endgroup
    \@tempa
  }%
\newcommand{\Hom}{\operatorname{Hom}}
\newcommand{\ind}{\operatorname{ind}}
\newcommand{\Ker}{\operatorname{Ker}}
\newcommand{\End}{\operatorname{End}}
\newcommand{\Gal}{\operatorname{Gal}}
\newcommand{\Mod}{\operatorname{Mod}}
\DeclareMathOperator{\vol}{vol}
\DeclareMathOperator{\Aut}{Aut}
 \DeclareMathOperator{\lcm}{lcm}
\theoremstyle{plain} 
\newtheorem{theorem}{Theorem}[section]
\newtheorem{corollary}[theorem]{Corollary}
\newtheorem{lemma}[theorem]{Lemma}
\newtheorem{proposition}[theorem]{Proposition}
\newtheorem{fact}[theorem]{Fact}
\newtheorem{proposition-definition}[theorem]{Proposition-Definition}
\theoremstyle{definition}
\newtheorem{definition}[theorem]{Definition}
\theoremstyle{remark}
\newtheorem{remark}[theorem]{Remark}
\numberwithin{equation}{section}
\title{Existence of  supersingular  representations of  $p$-adic reductive groups}
\author{M.-F. Vign\'eras}
\address[M.-F. Vign\'eras]{Institut de Math\'ematiques de Jussieu, 175 rue du Chevaleret, Paris 75013 France}
\email{vigneras@math.jussieu.fr}
\keywords{integral structure, admissibility, discrete cocompact subgroups, minimal representations, square integrable representations,  reflection module, parabolic induction, pro-$p$ Iwahori Hecke algebra}
\subjclass[2010]{primary 20C08, secondary  11F70}
\date{\today}
\begin{document} 

\maketitle
%\abstract   
\setcounter{tocdepth}{2}  
\tableofcontents

Let  $G$ be a connected   simple adjoint $p$-adic group not  isomorphic  to  a projective linear group $PGL_m(D)$  of a  division algebra $D$ $(m\geq 2$), or an adjoint ramified  unitary group $PU(h)$ of a split hermitian form $h$ in $3$ variables. 
We prove that  $G $ admits an irreducible admissible  supercuspidal (= supersingular) representation over any  field of characteristic $p$.

\section{Introduction} \label{sec:introduction}  
Throughout this paper,  
$F$ is a local non-archimedean field of characteristic $a$ and residue field  of characteristic $p$ with $q$ elements,  $G={\bf G}(F)$ where $\bf G$ is   a connected reductive $F$-group,  $C$   a field (of coefficients) of characteristic $c$ and $C^{alg}$ an algebraic closure of $C$.

\bigskip Recent applications of automorphic forms to number theory have imposed the study of smooth representations of $G $  on $C$-vector spaces for $C$  not algebraically closed  and often finite with $c=p$.  Indeed one expects a strong relation, \`a la Langlands, with $C$-representations of the Galois group of $F$ - the only established case, however, is that of $GL(2,\mathbb Q_p)$.

\bigskip An irreducible admissible $C$-representation $\pi$ of $G$ is called supercuspidal if it is not  isomorphic to a subquotient of a representation parabolically induced from an irreducible admissible $C$-representation of a Levi subgroup. In the established cases of the Langlands correspondence, they correspond to the irreducible continuous $C$-representations of the Galois group of $F$.   All irreducible admissible $C$-representations of $G$ are constructed from the irreducible admissible supercuspidal $C$-representations of the Levi subgroups of $G$ using parabolic induction.  

\bigskip 
 
When $c=p$, the
 finite group analogue  $\mathbf{H}(k)$  of $G$, where $\bf H$ is a connected reductive  group over a finite field $k$ of characteristic $p$, 
admits no irreducible supercuspidal $C$-representation \cite[Thm.6.12]{CabEng}. 

For $C=\mathbb F_p^{alg}$, irreducible admissible supercuspidal $C$-representations have been constructed  for  some rank $1$  groups:
 
$PGL(2,\mathbb Q_p)$  Breuil \cite{Breuil}   after the pionner work of Barthel-Livne \cite{BL};  this was the   starting point of the Langlands $p$-adic correspondance (Colmez and al.), 
 
 $PGL(2,F)$,  Paskunas  \cite{Paskunas},\cite{BreuilPaskunas},   using   $G$-equivariant coefficient systems on the adjoint Bruhat-Tits building of $G$,

 $ SL(2, \mathbb Q_p)$   \cite{Abdellatif}, \cite{Cheng}, 
 
 $ U(1,1)( \mathbb Q_p)$ unramified  \cite{Koziol}, 
 
 $   U(1,2)(F)$  unramified and $p\neq 2$  \cite{KoziolXu}.

\begin{theorem}\label{thm main} Assume $(a,c)=(0,p)$ and  $\bf G$  absolutely simple adjoint  not isomorphic to 
  the projective linear group $\bf PGL_m$ of a $d^2$-dimensional central division $F$-algebra $(m\geq 2, d\geq 1$), or to the adjoint unitary group   of a split hermitian form  in $3$ variables  over a ramified quadratic extension  of $F$. 
 
 Then, $G$ admits an irreducible admissible supercuspidal $C$-representation. \end{theorem}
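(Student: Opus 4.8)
\emph{Plan of proof.} The strategy is to realise the desired representation inside a space of automorphic forms for a global anisotropic form of $\mathbf G$, and to certify supercuspidality by working on the pro-$p$ Iwahori Hecke algebra side.

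\textbf{Step 1 (globalisation).} Since $a=0$ one may write $F=k_{v_0}$ for a number field $k$ and a finite place $v_0$. The first task is to build a connected absolutely simple adjoint $k$-group $\mathbf H$ with an isomorphism $\mathbf H_{k_{v_0}}\simeq\mathbf G$ which is anisotropic at every archimedean place of $k$, so that $\mathbf H(k_\infty)$ is compact and $\mathbf H$ is $k$-anisotropic. Such $\mathbf H$ is an inner twist of the quasi-split form with the prescribed $\ast$-action, and its existence is a Hasse principle question for adjoint groups over $k$, the only genuine obstruction being a reciprocity condition in a Brauer group. This is exactly where the excluded cases enter: for an inner form of type $A$, anisotropy at a real place forces the underlying central simple algebra to be quaternionic there, hence of degree $2$, so no global form anisotropic at $\infty$ can localise at $v_0$ to $\PGL_m$ of a division algebra of higher degree; the ramified $\PU_3$ is obstructed for a parallel reason involving hermitian forms of odd rank. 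For every remaining type $\mathbf H$ is produced. Godement's compactness criterion then shows that $\Gamma:=\mathbf H(\mathcal O_k[1/v_0])$, embedded diagonally into $G=\mathbf H(k_{v_0})$, is a discrete \emph{cocompact} subgroup; passing to a finite-index subgroup (Selberg's lemma) we may assume $\Gamma$ is torsion-free, and we may take its covolume as small as we wish and its image inside any prescribed compact open subgroup of $G$.

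\textbf{Step 2 (the automorphic module and its Hecke module).} Let $V:=C^\infty(\Gamma\backslash G,C)$ with the right-translation action; since $\Gamma\backslash G$ is compact, $V^K=C[\Gamma\backslash G/K]$ is finite dimensional for every compact open $K\subseteq G$, so $V$ is admissible, hence so is every subrepresentation of $V$. In particular an irreducible representation realised inside $V$ (say in $\Soc V$) is automatically admissible and, by construction, square integrable. It thus suffices to exhibit an irreducible supercuspidal subrepresentation of $V$. Fix a pro-$p$ Iwahori subgroup $I(1)\subseteq G$ and let $\mathcal H=\mathcal H_C(G,I(1))$; it acts on $V^{I(1)}=C[\Gamma\backslash G/I(1)]$, a \emph{finite-dimensional} $\mathcal H$-module of dimension $\vol(\Gamma\backslash G)/\vol(I(1))$, which we can make arbitrarily large. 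We invoke the dictionary between the two meanings of ``supersingular'': for an irreducible admissible $C$-representation $\pi$ with $\pi^{I(1)}\neq 0$, $\pi$ is supercuspidal iff every simple subquotient of the $\mathcal H$-module $\pi^{I(1)}$ is supersingular, supersingularity of a simple $\mathcal H$-module meaning that the Bernstein-type central operators indexed by strictly dominant coweights act nilpotently. So it is enough to produce a supersingular simple subquotient of the finite-dimensional $\mathcal H$-module $V^{I(1)}$ together with an irreducible subrepresentation of $V$ carrying it.

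\textbf{Step 3 (producing a supersingular constituent — the main point).} Suppose no simple subquotient of $V^{I(1)}$ is supersingular. By the classification of simple $\mathcal H$-modules, each such subquotient is then a subquotient of $\Ind_{\mathcal H_M}^{\mathcal H}N$ for a proper Levi $M\subsetneq G$ and a \emph{supersingular} simple $\mathcal H_M$-module $N$; and by compatibility of Hecke-level induction and restriction with parabolic induction and Jacquet functors for $G$, the modules $N$ occurring are subquotients of the finite-dimensional space $(V_N)^{I(1)_M}$ (the $I(1)_M$-invariants of the Jacquet module of $V$ along the unipotent radical $N$). Since $V^{I(1)}$ has finite length only finitely many pairs $(M,N)$ intervene, giving a bound of the shape
\[
\dim_C V^{I(1)}\ \le\ \sum_{M\subsetneq G}\ d_M\cdot\dim_C(V_N)^{I(1)_M},
\]
with $d_M$ depending only on $M$. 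The essential input is that, because $\Gamma$ is \emph{cocompact} — so $\Gamma\backslash G$ has no cusps and (via Casselman's canonical pairing) the Jacquet modules of $V$ along proper parabolics are of strictly smaller order than $V$ — the right-hand side is of lower order than $\dim_C V^{I(1)}$, forcing a contradiction and hence a supersingular simple subquotient of $V^{I(1)}$; tracking it through the universal module $\ind_{I(1)}^G\mathbf 1\otimes_{\mathcal H}(-)$ and using the classification to isolate a simple quotient embedding into $V$ yields an irreducible admissible supercuspidal subrepresentation of $V$. In practice one does not run the abstract count but exhibits a concrete candidate — the \emph{reflection module} of $\mathcal H$ (the supersingular module built, in the spirit of the reflection representation of the finite reductive quotient, from the sign/reflection data of the affine Weyl group and Steinberg-type cohomology of the Bruhat–Tits building) — together with a Poincaré-series vector of $V$ generating a copy of it. Finally, $V^{I(1)}$ and $V$ carry the evident integral structure $\mathbb Z[\Gamma\backslash G/\,\cdot\,]$, so the statement over an arbitrary coefficient field $C$ of characteristic $p$ follows from the case $C=\mathbb F_p$ by scalar extension, supersingularity and supercuspidality being insensitive to it.

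\textbf{Main obstacle.} The hardest point is Step 3: pinning down exactly which non-supersingular constituents can occur in $C[\Gamma\backslash G/I(1)]$ and showing the resulting dimensions are genuinely of lower order — which forces one to use the cocompactness of $\Gamma$ (absence of an Eisenstein part) in an essential way — together with the parallel, delicate verification that for the two excluded families the relevant Hecke-algebraic structure degenerates, so that no supersingular reflection-type module survives and the construction indeed fails. By comparison the globalisation of Step 1 and the Hecke-module translation of Step 2 rest on now-standard structure theory. A subsidiary subtlety, handled along the way, is that an irreducible smooth $C$-representation need not a priori be admissible, nor a quotient of an admissible one; this is why the final representation is extracted as a \emph{sub}representation of the admissible module $V$ rather than as an abstract subquotient.
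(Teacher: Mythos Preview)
Your proposal has two genuine gaps.

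First, your explanation in Step~1 of why $PGL_m(D)$ and the ramified $PU_3$ are excluded is wrong. Discrete cocompact subgroups of $G$ exist for \emph{every} connected reductive $p$-adic group when $\mathrm{char}\,F=0$ (Borel--Harder); no anisotropy at infinity is required, and $PGL_m(D)$ admits such $\Gamma$ as well as any other group. The paper excludes these groups for a reason that has nothing to do with globalisation: they are precisely the absolutely simple adjoint groups whose Iwahori Hecke algebra has affine type $A_\ell$ ($\ell\ge 1$) with equal parameters, and in that case the only discrete character of $H_{\mathbb C}$ is the special (Steinberg) one --- whose reduction mod $p$ is not supersingular --- and the reflection module is not discrete either (Lusztig). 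So no discrete simple $H_{\mathbb C}$-module with supersingular reduction exists, and the method genuinely breaks; it is not that $\Gamma$ cannot be found.

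Second, and more seriously, your Step~3 is not a proof. The inequality you write down is neither made precise nor justified: in characteristic $p$ the Jacquet functor is badly behaved, ``of lower order'' has no meaning here, and Casselman's pairing does not yield what you assert for $V=C^\infty(\Gamma\backslash G,C)$. You then acknowledge the count is not actually run and point to the reflection module and a ``Poincar\'e-series vector'', but give no mechanism for showing such a vector generates a supersingular submodule of $V^{I(1)}$. The paper's route avoids this entirely by a detour through characteristic zero. One constructs, case by case on the type of $\Sigma$, a simple \emph{discrete} right $H_{\mathbb C}(G,\mathfrak B)$-module $M_{\mathbb C}$ (the $\mathfrak B$-invariants of a square-integrable $\mathbb C$-representation) carrying an $H_{\mathbb Z[q^{1/2}]}$-integral structure with supersingular reduction mod $p$: for $B_\ell,C_\ell,F_4,G_2$ and $A_1$ with unequal parameters one uses a discrete non-special character from Borel's list; for the split types $D_\ell,E_6,E_7,E_8$ one uses the twist of the reflection module, discrete by Lusztig. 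The de George--Wallach limit multiplicity formula then gives a cocompact $\Gamma$ with $M_{\mathbb C}\hookrightarrow\mathbb C[\Gamma\backslash G/\mathfrak B]$; intersecting with the integral lattice $\mathbb Z[q^{1/2}][\Gamma\backslash G/\mathfrak B]$ and reducing produces a nonzero supersingular element of $\mathbb F_p[\Gamma\backslash G/\mathfrak B]$, hence of $C[\Gamma\backslash G/\mathfrak B]$ by scalar extension. Finally one takes an irreducible \emph{quotient} (not a subrepresentation) of the $G$-span of this element inside $\rho_C^\Gamma$; it is admissible because $\rho_C^\Gamma$ is and $a=0$, and supercuspidal by the supersingularity criterion. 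This characteristic-zero bridge through discrete series and limit multiplicities is the heart of the argument, and it is absent from your proposal.
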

  
The irreducible admissible supercuspidal $C$-representation of $G$ will be a subquotient  of a subrepresentation  $\rho^\Gamma_{C} :=C(\Gamma\backslash G)^\infty$ (smooth induction of the trivial $ C$-representation) for a discrete cocompact subgroup $\Gamma$ of $G$ \cite{BorelHarder}.
The proof is local and  uses  a criterion of supercuspidality proved
 with  R. Ollivier for  $C$ algebraically closed  \cite[Thm 3]{OV} and that we extend to any $C$:

\begin{proposition}\label{prop ss=sc}{\rm [Supercuspidality criterion]} Assume $c=p$. An irreducible admissible $C$-representation $\pi$ of $G$ is supercuspidal if and only if it contains a non-zero pro-$p$-Iwahori invariant supersingular element if and only if $\pi$ is supersingular (all pro-$p$-Iwahori invariant elements  of $\pi$ are supersingular).\end{proposition}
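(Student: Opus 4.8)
The statement to prove is the supercuspidality criterion (Proposition \ref{prop ss=sc}): for $c = p$, an irreducible admissible $C$-representation $\pi$ of $G$ is supercuspidal iff it contains a non-zero supersingular pro-$p$-Iwahori-invariant element iff every pro-$p$-Iwahori-invariant element is supersingular. The reference \cite[Thm 3]{OV} gives exactly this for $C$ algebraically closed, so the task is purely one of \emph{descent of coefficients} from $C\abs = C^{alg}$ to an arbitrary field $C$ of characteristic $p$. The plan is to run the standard dictionary between $\pi$ and its $I(1)$-invariants $\pi^{I(1)}$ as a module over the pro-$p$-Iwahori Hecke algebra $\HH_C = \HH_C(G,I(1))$, and to check that each of the three notions in the statement --- supercuspidality of $\pi$, supersingularity of a vector, supersingularity of the whole module $\pi^{I(1)}$ --- is insensitive to the base change $C \rightsquigarrow C\abs$.

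\emph{Step 1: the Hecke-module side is stable under scalar extension.} First I would record that $\HH_{C\abs} = \HH_C \otimes_C C\abs$ (the structure constants lie in $\Z$, indeed in $\Z[q]$), that the supersingular central ideal/maximal-ideal structure of $\HH$ is defined over the prime field, and hence that for any $\HH_C$-module $M$ one has: $M$ is supersingular (killed by a power of the supersingularity ideal, in the sense of \cite{OV}) iff $M \otimes_C C\abs$ is supersingular as an $\HH_{C\abs}$-module; and a vector $v \in M$ is supersingular iff $v \otimes 1$ is. This is the elementary half. Applying it with $M = \pi^{I(1)}$ and using the exactness of $(-)^{I(1)}$ on smooth $C[G]$-modules for $c = p$ (Bernstein-style, or as in \cite{OV}), together with $(\pi \otimes_C C\abs)^{I(1)} = \pi^{I(1)} \otimes_C C\abs$, reduces the ``supersingular vector'' and ``supersingular module'' equivalences to the algebraically closed case already handled by \cite[Thm 3]{OV}.

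\emph{Step 2: supercuspidality descends.} The genuinely representation-theoretic point is: $\pi$ is supercuspidal over $C$ iff $\pi \otimes_C C\abs$ is ``supercuspidal'' in the weak sense that none of its irreducible subquotients is a subquotient of a parabolic induction. Here I would argue as follows. Parabolic induction $\Ind_P^G$ commutes with $- \otimes_C C\abs$ and is exact; if $\pi$ were a subquotient of $\Ind_P^G \sigma$ for an irreducible admissible $C$-representation $\sigma$ of a Levi $M$, then $\pi \otimes C\abs$ is a subquotient of $\Ind_P^G(\sigma \otimes C\abs)$, and each irreducible $C\abs$-constituent of $\pi \otimes C\abs$ is a subquotient of $\Ind_P^G$ of an irreducible constituent of $\sigma \otimes C\abs$ --- so no constituent of $\pi\otimes C\abs$ is supercuspidal, contradicting the $C\abs$-case. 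Conversely, if $\pi$ is not supercuspidal over $C\abs$ --- i.e. some $\pi\abs \subseteq \pi \otimes C\abs$ is a subquotient of $\Ind_P^G \tau$ with $\tau$ irreducible admissible over $C\abs$ --- I would descend $\tau$: since $C\abs/C$ is algebraic, $\tau$ is defined over a finite subextension $C'$, and $\Ind_P^G$ of the finite-$C$-length $C'$-representation underlying $\tau$ is a finite-length $C$-representation one of whose constituents $\pi'$ satisfies $\pi \subseteq \pi' \otimes C\abs$ up to the usual $\Aut(C\abs/C)$-orbit bookkeeping; hence $\pi$ is a subquotient of a parabolic induction from a Levi. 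Combining Steps 1 and 2 with \cite[Thm 3]{OV} applied over $C\abs$ yields the three-way equivalence over $C$.

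\emph{Main obstacle.} The elementary linear-algebra descent (Step 1) is routine once one knows $(-)^{I(1)}$ is exact and commutes with $\otimes_C C\abs$. The real work is Step 2, and within it the delicate point is the \emph{converse} direction: transporting a non-supercuspidality witness from $C\abs$ back to $C$. One must ensure that an irreducible admissible $C\abs$-representation of a Levi $M$ appearing in the obstruction is already defined over a finite extension of $C$ and, after restricting scalars to $C$, still produces --- via parabolic induction and passage to a subquotient --- a representation of which the original $C$-representation $\pi$ is a genuine subquotient (not merely a constituent of some Galois twist). Handling the $\Aut(C\abs/C)$-action on isomorphism classes, and the fact that $\pi \otimes_C C\abs$ need not be irreducible but is a finite direct sum of Galois-conjugate isotypic pieces, is where care is required; I would lean on the finiteness of length of $\pi \otimes_C C\abs$ (admissibility plus $\HH$-module finiteness over $C\abs$) to make the bookkeeping finite and clean.
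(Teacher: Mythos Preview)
Your proposal is correct in outline and follows the same route as the paper: reduce to the algebraically closed case handled in \cite[Thm.~3]{OV} by checking that each of the three properties (supercuspidal, contains a supersingular $\mathfrak U$-invariant, $\pi^{\mathfrak U}$ supersingular) is stable under the passage $C \leftrightarrow C^{alg}$. Two remarks on the execution.

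First, in Step~1 you invoke exactness of $(-)^{I(1)}$ on smooth mod-$p$ representations. This is false in general (already for $K=\mathbb Z/p\mathbb Z$ acting on its regular representation over $\mathbb F_p$, invariants are not right exact), and in any case you do not need it: the identity $(\pi \otimes_C C^{alg})^{\mathfrak U} = \pi^{\mathfrak U} \otimes_C C^{alg}$, which holds by flatness of $C^{alg}$ over $C$, is all that is required on the Hecke-module side.

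Second, in Step~2 you correctly locate the delicate point (descent of non-supercuspidality), but your plan to descend the \emph{Levi} representation $\tau$ from $C^{alg}$ to a finite extension of $C$ is awkward: an arbitrary irreducible admissible $C^{alg}$-representation of $M$ need not obviously descend, and you would have to manufacture such a $\tau$ from the data at hand. The paper avoids this entirely by invoking the structural description of $\pi_{C^{alg}}$ from \cite{HenV} (recorded as Fact~\ref{fact HenV} and Prop.~\ref{prop HenV}): each irreducible subquotient $\rho$ of $\pi_{C^{alg}}$ is absolutely irreducible, descends to some finite $C'/C$, and the descent $\rho'$, viewed as a $C$-representation, is $\pi$-isotypic of finite length. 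From this one reads off immediately that $\pi$ is supercuspidal iff some (equivalently every) $\rho$ is, and that $\pi^{\mathfrak U}$ is supersingular iff ${\rho'}^{\mathfrak U}$ is --- the latter because ${\rho'}^{\mathfrak U}$ is a finite direct sum of copies of $\pi^{\mathfrak U}$ as $H_C(G,\mathfrak U)$-modules. This dissolves your ``main obstacle'' without ever touching $\tau$ or tracking $\Aut_C(C^{alg})$-orbits on the Levi side.
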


The equivalence supercuspidal $\Leftrightarrow $ supersingular  follows also from the  classification   \cite[Thm.9]{HenV}.   
Let  $\mathfrak B$ denote an Iwahori subgroup of $G$ and $H_C(G,\mathfrak B)$ the Iwahori Hecke $C$-algebra.

\begin{proposition}\label{prop Gammasuper}  Assume $(a,c)=(0,p)$ and $\bf G$ as in  Thm.\ref{thm main}. Then,   there exists a discrete cocompact subgroup $\Gamma$ of $G$ such that the $H_C(G,\mathfrak B)$-module $ C[\Gamma \backslash G/\mathfrak B]$ contains  a non-zero supersingular element.\end{proposition}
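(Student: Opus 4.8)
The plan is to produce, for a suitable discrete cocompact $\Gamma\subset G$, a supersingular element inside $C[\Gamma\backslash G/\mathfrak B]$ as an $H_C(G,\mathfrak B)$-module. First I would recall what ``supersingular'' means on the level of the Iwahori Hecke algebra: the center $Z_C$ of $H_C(G,\mathfrak B)$ (or, in characteristic $p$, of the pro-$p$ Iwahori Hecke algebra, restricted via the idempotent) carries a canonical ideal $\mathcal J$ generated by certain ``positive/negative'' central elements associated to the Bernstein-type basis, and an element $v$ is supersingular exactly when it is annihilated by a power of $\mathcal J$; equivalently, in the finite-dimensional setting here, when $\mathcal J v = 0$, so $v$ is killed by the augmented versions of the central operators $z_{\lambda^+}$ coming from dominant coweights in the center of the Levi data. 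So the target is: find $v\in C[\Gamma\backslash G/\mathfrak B]$, $v\neq 0$, on which every such central Hecke operator acts by the appropriate scalar (zero after the augmentation twist). Because $C[\Gamma\backslash G/\mathfrak B]=C\otimes_{\Z}\Z[\Gamma\backslash G/\mathfrak B]$ and the Hecke operators are defined over $\Z$, it suffices to produce the element over $\o{\mathbb F}_p$ (or over $\Z$ and reduce), which is why the statement for general $C$ reduces to the algebraically closed case already treated in \cite{OV}; hence the real content is the \emph{existence} of $\Gamma$ making the finite module $C[\Gamma\backslash G/\mathfrak B]$ nonzero in the relevant generalized eigenspace.

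The second step is the construction of $\Gamma$. Here I would invoke the existence of discrete cocompact subgroups of $G$ (Borel--Harder \cite{BorelHarder}, as already cited) and then pass to a deep enough congruence subgroup: choosing $\Gamma$ inside the pro-$p$ Iwahori (or with controlled image modulo $\mathfrak B$) one can arrange that $\Gamma\backslash G/\mathfrak B$ is a controlled finite set and that the Hecke action on $C[\Gamma\backslash G/\mathfrak B]$ is ``as generic as possible.'' The key mechanism is a counting / dimension argument: the trivial character and more generally the characters of $H_C(G,\mathfrak B)$ that factor through parabolic induction occupy only a bounded-codimension piece of $C[\Gamma\backslash G/\mathfrak B]$, while the dimension of the whole module grows with the index $[\![\Gamma]\!]$; so for $\Gamma$ small enough the supersingular (equivalently, non-induced) part is forced to be nonzero. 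Concretely I would compare $\dim_C C[\Gamma\backslash G/\mathfrak B]$, which equals $|\Gamma\backslash G/\mathfrak B|$, with an upper bound for the total dimension of all generalized eigenspaces of the central Hecke operators attached to \emph{non}-supersingular characters; the latter is controlled by the combinatorics of the affine Weyl group and the (finitely many) Levi subgroups, and in particular does not see the full growth coming from $\Gamma$. The excluded cases — $\mathbf{PGL}_m(D)$ and ramified $\mathrm{PU}_3$ — are precisely those where this growth estimate degenerates (the building has too small a dimension, or the Hecke algebra is too small), which is consistent with the hypothesis of Theorem~\ref{thm main}.

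The main obstacle, and the step I expect to require the most care, is making the ``dimension of the non-supersingular part'' bound effective and uniform in $\Gamma$. Parabolically induced modules for $H_C(G,\mathfrak B)$ are classified (via Ollivier--Vignéras and the work behind \cite{HenV}), but bounding how much of $C[\Gamma\backslash G/\mathfrak B]$ they can account for requires either (a) an explicit Euler-characteristic / Lefschetz-type computation on the Bruhat--Tits building — using that $\rho_C^\Gamma=C(\Gamma\backslash G)^\infty$ has a resolution by chains of the building, so its $\mathfrak B$-invariants have an alternating-sum description in terms of the finite flag varieties of the parahoric quotients — or (b) a direct automorphic input showing that the ``Eisenstein'' part is small. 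Route (a) seems cleanest: the building of $G$ has dimension equal to the rank, the chain complex is finite modulo $\Gamma$, and taking $\mathfrak B$-invariants turns the boundary maps into explicit matrices over the finite Hecke algebras of the $\mathbf{H}(k)$'s; then \cite[Thm.~6.12]{CabEng} (no supercuspidals for finite reductive groups) is exactly what lets one identify the non-supersingular contribution with the part visible on proper parahorics, whose size is bounded independently of the depth of $\Gamma$. Assembling this into a clean inequality, and checking that the two excluded families are the only ones where it fails, is where the real work lies; everything else is bookkeeping with the pro-$p$ Iwahori Hecke algebra and a reduction-mod-$p$ / base-change argument to descend from $\o{\mathbb F}_p$ to an arbitrary $C$ of characteristic $p$.
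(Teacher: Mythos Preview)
Your proposal has a genuine gap, and the paper takes an entirely different route.

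The paper's argument does not work directly in characteristic $p$ at all. Instead it lifts to $\mathbb C$: one first exhibits, type by type, a simple \emph{discrete} (i.e.\ coming from a square-integrable representation) right $H_{\mathbb C}(G,\mathfrak B)$-module $M_{\mathbb C}$ that carries a $\mathbb Z[q^{1/2}]$-integral structure whose reduction modulo $p$ is supersingular. For types $B_\ell,C_\ell,F_4,G_2$ and $A_1$ with unequal parameters this is a discrete non-special character of the affine Hecke algebra (Borel's list); for types $D_\ell,E_6,E_7,E_8$ it is the twisted reflection module of rank $|S|$ (Lusztig). Then the $p$-adic limit multiplicity theorem (de George--Wallach / DKV / Kazhdan) guarantees an embedding $M_{\mathbb C}\hookrightarrow \mathbb C[\Gamma\backslash G/\mathfrak B]$ for some cocompact $\Gamma$. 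Intersecting with $\mathbb Z[q^{1/2}][\Gamma\backslash G/\mathfrak B]$ gives another integral lattice in $M_{\mathbb C}$, whose reduction is a supersingular submodule of $\mathbb F_p[\Gamma\backslash G/\mathfrak B]$; tensoring up to $C$ finishes.

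Your counting strategy, by contrast, never gets off the ground. The crucial claim --- that the non-supersingular part of $C[\Gamma\backslash G/\mathfrak B]$ has dimension bounded independently of $\Gamma$ --- is false as stated: non-supersingular Hecke modules arise via parabolic induction, and their contribution to $(\rho_C^\Gamma)^{\mathfrak B}$ is governed by spaces of the shape $C[\Gamma\backslash G/\mathfrak P]$ for proper parahorics $\mathfrak P$, which grow with the index of $\Gamma$ just as $C[\Gamma\backslash G/\mathfrak B]$ does. Your route (a) would at best produce an alternating-sum (Euler-characteristic) identity, not the one-sided inequality you need; route (b) is the limit-multiplicity input the paper actually uses, but in characteristic $0$, not $p$. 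Finally, your diagnosis of the excluded cases is off: $PGL_m(D)$ for large $m$ has a building of large rank, so ``too small a dimension'' is not the obstruction. The real reason type $A_\ell$ with equal parameters is excluded is that the special character is the \emph{only} discrete character of $H_{\mathbb C}$ (so its reduction is not supersingular), and the reflection module fails to be discrete precisely in type $A$.
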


To prove the theorem, we  pick a non-zero supersingular  element $v$  in  $C[\Gamma \backslash G/\mathfrak B]$ and  an  irreducible quotient $\pi$ of the subrepresentation of $\rho^\Gamma_{C}  $   generated by $v$.  Then $\pi$ is  admissible ($\rho^\Gamma_{C}$ is admissible and $a=0$) and contains a non-zero $\mathfrak B$-invariant supersingular element, hence is  supercuspidal by the  supercuspidality criterion.

\bigskip  We explain now the meaning of supersingular,  why we exclude  $PGL(m,D)$ and $PU(h)$ and how we prove Prop.\ref {prop Gammasuper}. 
 
We choose  a minimal parabolic $F$-subgroup $\bf B$ of $\bf G$ containing a maximal $F$-subtorus $\bf T$.  Bruhat and Tits associated to  them  an affine Coxeter system $(W,S)$,    parameters $q_s=q^{d(s)} $ for $s\in S$, where $q$ is the residue field of $F$ and the $d(s)$ are integers $\geq 1$, and a commutative group $\Omega$  acting on the Dynkin diagram $Dyn$  of $(W,S)$  decorated with the parameters $d(s)$. The     diagram $Dyn$  is the completed Dynkin diagram of a reduced root system $\Sigma$.  
 
 The Iwahori Hecke ring $H(G^{sc}, \mathfrak B^{sc})$  of an Iwahori subgroup  $\mathfrak B^{sc}$ of  the simply connected cover $G^{sc}$ of the derived group of $G$ is  isomorphic to  the Hecke ring $H:=H(W,S,q_s)$ associated to $(W,S),(q_s)_{ s\in S}$, and the  Iwahori Hecke ring   $H(G,\mathfrak B)$   is  isomorphic to  $H:=\mathbb Z[\Omega] \ltimes  H(W,S,q_s)$. To each cocharacter $\mu\in X_*(T)$  of $T$ is associated a central element $z_\mu$ of $H$ \cite{Vigss}. Write $H_C:=C\otimes H$.  When $c=p$, an element $v$ in a right $H_C$-module is called supersingular if  $v z_\mu^n=0$ for all $\mu\in X_*(T)$ dominant but  $\mu^{-1}$ not dominant and $n\in \mathbb N$ large.

 A simple right $ H_{\mathbb C} $-module $M_\mathcal C$ which is the
  $\mathfrak B$-invariants of  an irreducible admissible square integrable modulo center $\mathbb C$-representation of $G$ is called  discrete. If  $\bf G$ is semi-simple,  $M_\mathcal C$ is discrete if and only if   the simple components of  its restriction to  $H_{\mathbb C}^{sc}$ are discrete; the equivalence uses Casselman's criterion of square integrability \cite[\S 2.5]{Cas} and the Bernstein Hecke elements \cite[Cor. 5.28]{Vig1}.
   
\medskip Assume that $\bf G$ is absolutely simple adjoint. If the type of $\Sigma$  is $A_\ell \, (\ell\geq 2)$, the parameters are equal and     $G= PGL_m(D)$ for a $d^2$-dimensional central division $F$-algebra $D$ ($m\geq 2, d\geq 1$). If the type of $\Sigma$   is $A_1$ and the two parameters  are equal, then $G=PU(h)$ is the adjoint unitary group of a split hermitian form $h$ in $3$ variables  over a ramified quadratic extension  of $F$  \cite[\S 4]{Tits}. 

When the type of $\Sigma$ is  different from  $A_\ell \, (\ell\geq 1)$ if the parameters are equal,  we find  a simple discrete   $H_{\mathbb C} $-module $M_{\mathbb C} $   with an  $ H_{\mathbb Z[q^{1/2}]}$-integral structure $M$ of  reduction,  modulo a maximal ideal $P$ of $\mathbb Z[q^{1/2}]$, a supersingular $ H_{\mathbb F_p}$-module $M_{\mathbb F_p}$:

-    When  the type   is $B_{\ell} (\ell \geq 3),C_{\ell}(\ell \geq 2) ,F_4,G_2$,  or $A_1$ with  two distinct parameters,   then $H_{\mathbb C}^{sc}$ admits a discrete character which is not the  special character \cite[5.9]{Borel}\footnote{In Borel, the Iwahori group is the fixer $\tilde Z_0\mathfrak B$ of an alcove, where $\tilde Z_0$ is the maximal compact subgroup of a minimal Levi subgroup; if $\bf G$ is $F$-split, $\tilde Z_0\mathfrak B=\mathfrak B$}.  By extending or inducing such a character, we construct  a simple discrete right $H_{\mathbb C}$-module  of dimension $\leq 2$  with a natural $H$-integral structure. 
 As we avoided the Steinberg character,  the  reduction modulo $p$ of the $H$-integral structure  is a supersingular $H_{\mathbb F_p}$-module. See  Prop. \ref{prop  12}.

-  When   the type   is $D_{\ell}  (\ell \geq 3), E_6,E_7,E_8$, the group $\bf G$ is $F$-split \cite[\S 4]{Tits}. The image  by a natural involution of $H_{\mathbb C}$   of the reflection  right $H_{\mathbb C}$-module   of $\mathbb C$-dimension $|S|$ is discrete \cite[4.23]{Lusztig}, has a  natural  $ H_{\mathbb Z[q^{1/2}]}$-integral structure of  reduction modulo a maximal ideal $P$ of $\mathbb Z[q^{1/2}]$ a    supersingular  $ H_{\mathbb F_p}$-module.  See Prop. \ref{prop discsinc}.

Now, we find  $\Gamma$ such that   $ C[\Gamma \backslash G/\mathfrak B]$ contains  a non-zero supersingular element.
The existence of discrete cocompact subgroups $\Gamma$ in $G$  is  ensured by $a=0$   \cite{Margulis}.  By the $p$-adic version of the de George-Wallach limit multiplicity formula (\cite[Appendice3, Prop.]{DKV} plus \cite[Thm.K]{Kazhdan}),  the simple discrete $H_{\mathbb C}$-module  $M_\mathbb C$ embeds in   $\mathbb C[\Gamma \backslash G/\mathfrak B]$ for some discrete cocompact subgroup  $\Gamma$ of $G$.  All the  $ H_{\mathbb Z[q^{1/2}]}$-integral structures of  $M_\mathbb C$ have the same semi-simplification hence have supersingular reduction modulo $P $, and   $M':= M_\mathbb C \cap  \mathbb Z[q^{1/2}][\Gamma \backslash G/\mathfrak B]$ is another $ H_{\mathbb Z[q^{1/2}]}$-integral structure of $M_\mathbb C$ of reduction modulo $P$ a  $ H_{\mathbb F_p}$-submodule $M'_{\mathbb F_p}$ of $\mathbb F_p[\Gamma \backslash G/\mathfrak B]$.  See Prop. \ref{prop mod}.
The scalar extension from  $\mathbb F_p$ to $C$ of the supersingular $ H_{\mathbb F_p}$-module $M'_{\mathbb F_p}$ is  a non-zero supersingular $H_C$-submodule of $C[\Gamma \backslash G/\mathfrak B]$. This ends the proof of Prop.\ref {prop Gammasuper}.

A similar argument  for an irreducible admissible supercuspidal complex representation of $G$ produces an integral structure  with admissible reduction (Cor. \ref{cor sca}).

\bigskip Returning to $\bf G$ general,  Kret proved that  $G$ admits  irreducible  admissible supercuspidal   complex representations  \cite{Kret} (if $a=0$ \cite{BP}). We extend this to  
 any field  $C$ of characteristic $0$   in \S\ref{S 8}:

  \begin{proposition}\label{prop Cfinite} {\rm [Change of coefficient field]} 
If $G$ admits an irreducible admissible  supercuspidal  representation over  some field  of characteristic $c$, supposed to be  finite if $c=p$, 
then $G$ admits an irreducible admissible  supercuspidal  representation over any field   of characteristic $c$.  
    \end{proposition}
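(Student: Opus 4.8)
The plan is to link an arbitrary field of characteristic $c$ to the coefficient field furnished by the hypothesis by iterating three operations that all preserve admissibility — extension of scalars $\pi\mapsto\pi\otimes_CC'$ along an inclusion $C\subseteq C'$, restriction of scalars $\pi'\mapsto\pi'|_C$ along a finite extension $C'/C$, and, when $c=0$, a specialisation that descends a representation defined over a finitely generated field to a number field — while tracking supercuspidality throughout. Supercuspidality is detected differently for $c=p$ and for $c=0$, so the two cases will be treated separately.

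For $c=p$ the hypothesis provides an irreducible admissible supercuspidal representation $\tau$ over a finite field $\mathbb F_q$, and I would argue through the supersingularity criterion of Proposition~\ref{prop ss=sc}, exactly as in the proof of Theorem~\ref{thm main}. Since the pro-$p$-Iwahori subgroup is pro-$p$ and $c=p$, the space of pro-$p$-Iwahori invariants of $\tau$ is non-zero, and $\tau$ being supercuspidal it is a supersingular module (all of its elements annihilated by a fixed power of each central $z_\mu$ with $\mu$ anti-dominant, the module being finite-dimensional). As $[\mathbb F_q:\mathbb F_p]<\infty$, the restriction of scalars $\tau|_{\mathbb F_p}$ is admissible of finite length (its base change to $\mathbb F_q$ is the sum of the $\Gal(\mathbb F_q/\mathbb F_p)$-conjugates of $\tau$), so it has a simple subrepresentation $\tau_0$, which is admissible and whose space of pro-$p$-Iwahori invariants is non-zero (pro-$p$ subgroup, characteristic $p$) and supersingular (a submodule of the finite-dimensional supersingular module attached to $\tau$); by Proposition~\ref{prop ss=sc}, $\tau_0$ is supercuspidal over $\mathbb F_p$. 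Finally, for an arbitrary field $C$ of characteristic $p$ one has $\mathbb F_p\subseteq C$; the base change $\tau_0\otimes_{\mathbb F_p}C$ is admissible, its space of pro-$p$-Iwahori invariants is the base change of that of $\tau_0$, hence a non-zero supersingular module, and an irreducible admissible constituent again has non-zero supersingular pro-$p$-Iwahori invariants, so is supercuspidal by Proposition~\ref{prop ss=sc}. The only external input here is that this last base change has finite length with admissible constituents, which is provided by the classification \cite{HenV}; supersingularity is visibly unaffected by either scalar operation since the conditions $vz_\mu^n=0$ are defined over $\mathbb Z$.

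For $c=0$ supercuspidality coincides with cuspidality (vanishing of all Jacquet modules $\pi_N$ along proper parabolic subgroups $P=MN$), the Jacquet functor is exact, and irreducible smooth representations are admissible over any field of characteristic $0$. The plan has four steps. \emph{(i) Descent to a finitely generated field.} Choose a small Iwahori-factorizable congruence subgroup $K$ with $\pi^K\neq0$ and $\pi$ generated by $\pi^K$; then $M:=\pi^K$ is a finite-dimensional simple module over $H_{C_0}(G,K)=H_{\mathbb Q}(G,K)\otimes_{\mathbb Q}C_0$, and by Casselman's computation of Jacquet modules on such invariants \cite{Cas} the vanishing of the $\pi_N$ is the vanishing of the $\mathbb Q$-rationally defined functors $r_N(M)$. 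Taking for $C_1$ the subfield of $C_0$ generated over $\mathbb Q$ by the matrix entries, in a $C_0$-basis of $M$, of a finite spanning set of the (finite-dimensional) image of $H_{\mathbb Q}(G,K)$ in $\End_{C_0}(M)$, one obtains a simple $H_{C_1}(G,K)$-module $M_1$ with $M_1\otimes_{C_1}C_0\cong M$, whence $r_N(M_1)=0$ for all $N$ by faithfully flat descent; the smooth $C_1$-representation $\pi_1$ generated by $M_1$ (so $\pi_1^K=M_1$ and $\pi_1$ has no non-zero subrepresentation with zero $K$-invariants) is then irreducible (as $M_1$ is simple), has all Jacquet modules zero (each $(\pi_1)_N$ is generated by the image of $M_1$, which lies in the $(K\cap M)$-invariants and equals $r_N(M_1)=0$), hence is admissible and supercuspidal over the finitely generated field $C_1$. \emph{(ii) Specialisation to a number field.} Spread $M_1$ out to a finite free $H_A(G,K)$-module $M_A$ over a finitely generated $\mathbb Q$-subalgebra $A$ of $C_1$ with $\operatorname{Frac}(A)=C_1$, shrinking $\Spec A$ so that each $r_N(M_A)$ is finite over $A$; since $r_N(M_A)\otimes_AC_1=r_N(M_1)=0$ each $r_N(M_A)$ is a torsion $A$-module, supported on a proper closed subset of the irreducible finite-type $\mathbb Q$-scheme $\Spec A$, so there is a closed point $\mathfrak m$ avoiding the finitely many of these supports, its residue field $\kappa(\mathfrak m)$ a number field (Zariski's lemma); then $M_{\mathfrak m}:=M_A\otimes_A\kappa(\mathfrak m)$ is a non-zero finite-dimensional $H_{\kappa(\mathfrak m)}(G,K)$-module with $r_N(M_{\mathfrak m})=0$, so the finitely generated representation it generates has vanishing Jacquet modules and an irreducible quotient, which is admissible (characteristic $0$) and cuspidal (right-exactness of the Jacquet functor) — an irreducible admissible supercuspidal representation $\tau$ over the number field $\kappa(\mathfrak m)$. \emph{(iii) Descent to $\mathbb Q$.} As $[\kappa(\mathfrak m):\mathbb Q]<\infty$, $\tau|_{\mathbb Q}$ is finitely generated and admissible with $(\tau|_{\mathbb Q})_N=(\tau_N)|_{\mathbb Q}=0$, so an irreducible quotient of it is an irreducible admissible supercuspidal $\mathbb Q$-representation $\sigma$. \emph{(iv) Ascent to $C$.} For any field $C$ of characteristic $0$ one has $\mathbb Q\subseteq C$, and $\sigma\otimes_{\mathbb Q}C$ is finitely generated and admissible with $(\sigma\otimes_{\mathbb Q}C)_N=\sigma_N\otimes_{\mathbb Q}C=0$, so an irreducible quotient of it is an irreducible admissible supercuspidal $C$-representation, as desired.

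I expect the main obstacle to be step (ii): one must ensure that a \emph{generic} specialisation of the finitely-generated-field model stays supercuspidal. This is precisely what the torsion argument secures — in characteristic $0$ supercuspidality is the vanishing of the Jacquet modules, which on the Hecke side are computed by functors defined over $\mathbb Q$ carrying the torsion-free model $M_A$ to torsion $A$-modules, so the vanishing over $C_1$ forces vanishing over $\kappa(\mathfrak m)$ for $\mathfrak m$ outside a proper closed subset of $\Spec A$. The remaining characteristic-$0$ inputs are standard (exactness of the Jacquet functor and its computation on Iwahori-factorizable invariants \cite{Cas}, Jacquet's admissibility theorem, and the equivalence cuspidal $\Leftrightarrow$ supercuspidal); in characteristic $p$, where these tools fail, one leans instead on Proposition~\ref{prop ss=sc} and the finiteness in \cite{HenV} — which is exactly why the hypothesis restricts to a finite, hence already small, coefficient field when $c=p$.
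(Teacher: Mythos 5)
Your proposal splits the statement into two cases, $c=p$ and $c=0$, and addresses each. The real gap is that the proposition is for \emph{any} characteristic $c$, and you have silently dropped the third family of cases, $c=\ell$ a prime with $0\neq\ell\neq p$. This is not a cosmetic omission: the mechanism you rely on for $c=0$ — vanishing of Jacquet modules — only detects \emph{cuspidality}, and in residue characteristic $\ell\neq p$ the implication cuspidal $\Rightarrow$ supercuspidal is false in general (this is precisely the non-banal phenomenon). So one cannot simply rerun your specialisation argument with $\mathbb Q$ replaced by $\mathbb F_\ell$: it would produce a cuspidal, not visibly supercuspidal, representation. What is needed in that range, and what the paper actually uses, is the parabolic-induction compatibility argument that works directly with the \emph{definition} of supercuspidal as ``not a subquotient of a proper parabolic induction.'' Concretely, the paper's Step~1 (twist by a character and apply \cite[II.4.9]{Viglivre} to descend to a finite extension of $F_c$) and Step~2 (descend along a finite field extension $C/C'$, showing by contraposition that if the $C'$-subrepresentation $\pi'\subset\pi$ is a subquotient of $\ind_P^G\tau'$, then $\pi$ is a subquotient of $\ind_P^G\tau'_C$, which has admissible irreducible subquotients by \cite[Cor.4]{HenV}) work uniformly for all $c\neq p$, and Step~3 (ascent to an arbitrary extension, using the structure of scalar extensions of irreducible admissibles from \cite{HenV}) works for all $c$. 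Your proof needs a separate argument for $c=\ell$ along these lines.

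On the two cases you did treat: your $c=p$ argument is correct and is close in spirit to the paper's (you descend $\mathbb F_q\to\mathbb F_p$ and then ascend $\mathbb F_p\to C$, tracking supercuspidality via the supersingularity criterion of Prop.~\ref{prop ss=sc}; the paper's Steps~2--3 do the same journey but deduce supercuspidality through parabolic-induction compatibility and the structure of $\pi_{C^{alg}}$). Minor point: in the final ascent you should take an irreducible \emph{sub}representation of $\tau_0\otimes_{\mathbb F_p}C$ rather than an arbitrary constituent, so that its $\mathfrak U$-invariants sit inside $(\tau_0^{\mathfrak U})\otimes_{\mathbb F_p}C$ and supersingularity is manifestly inherited; finite length (hence a nonzero socle) is supplied by \cite[Cor.4]{HenV}, as you note. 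Your $c=0$ argument takes a genuinely different route: the paper simply twists to put the central character into $\overline{\mathbb Q}$ and invokes \cite[II.4.9]{Viglivre} to descend to a number field, whereas you re-prove the descent by hand via a Hecke-module spreading-out and generic specialisation of the Jacquet functors. That buys you the ability to avoid the twisting step, at the cost of needing Casselman's Hecke-algebraic computation of Jacquet modules and the fact that the relevant functors $r_N$ are defined over $\mathbb Q$ and commute with flat base change — all plausible but considerably more machinery than the paper's citation.
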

We reduce  the construction of an irreducible admissible supercuspidal representation of $G$  to the case of $\bf G$ absolutely simple and adjoint in \S\ref{S 2}:
\begin{proposition} \label{prop adj} {\rm [Reduction to a simple adjoint group]} 
   Assume  $a=0$ and $C$ algebraically closed  or   finite.  If     any connected absolutely simple adjoint $p$-adic  group  admits an irreducible admissible supercuspidal $C$-representation, then any connected reductive  $p$-adic group 
  admits an irreducible admissible supercuspidal $C$-representation. \end{proposition}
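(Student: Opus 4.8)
The plan is to descend through the structure theory of $\mathbf G$ in two stages, after reducing the coefficient field to an algebraically closed one. For that reduction I would use Proposition~\ref{prop Cfinite}: by scalar extension, an irreducible admissible supercuspidal $C$-representation of an absolutely simple adjoint group breaks into finitely many irreducible admissible supercuspidal $C^{alg}$-representations, so the hypothesis passes to $C^{alg}$; and the conclusion obtained over $C^{alg}$ yields the conclusion over $C$ by Proposition~\ref{prop Cfinite} — when $c=p$ one checks in addition that the representation of $G$ produced below is realised over a finite subfield of $C^{alg}$, so that the finiteness requirement of that proposition is met. Over an algebraically closed $C$ I may freely invoke Schur's lemma for irreducible admissible representations, the existence of central characters, and the irreducibility of external tensor products of irreducible admissibles \cite{HenV}. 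The two stages are: first pass from a general connected reductive $\mathbf G$ to its adjoint quotient $\mathbf G^{\mathrm{ad}}=\mathbf G/\mathbf Z(\mathbf G)$, then from $\mathbf G^{\mathrm{ad}}$ to its absolutely simple adjoint factors, for which the hypothesis is exactly what is assumed.

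For the first stage, let $\psi\colon G=\mathbf G(F)\to\mathbf G^{\mathrm{ad}}(F)$ be the map induced by $\mathbf G\to\mathbf G^{\mathrm{ad}}$; its kernel $\mathbf Z(\mathbf G)(F)$ is central and its image $G'$ is an open normal subgroup of finite index in $\mathbf G^{\mathrm{ad}}(F)$ (the index is bounded by $\#H^1(F,\mathbf Z(\mathbf G))$, finite because $F$ is local and $\mathbf Z(\mathbf G)$ is of multiplicative type), so $\psi$ factors as $G\onto G/\mathbf Z(\mathbf G)(F)\congto G'$. Given an irreducible admissible supercuspidal $C$-representation $\bar\pi$ of $\mathbf G^{\mathrm{ad}}(F)$, I would first show $\bar\pi|_{G'}$ is a finite direct sum of irreducible admissible supercuspidal representations of $G'$: semisimplicity and finite length follow from Clifford's theorem for the finite quotient $\mathbf G^{\mathrm{ad}}(F)/G'$ and from admissibility of the restriction, while the essential point — that every constituent $\sigma'$ is supercuspidal — comes from the observation that $G'$ contains the $F$-points of the unipotent radical of every parabolic subgroup of $\mathbf G^{\mathrm{ad}}$ (because $\psi$ restricts to an isomorphism on each such unipotent radical), so that $\mathbf Q(F)\cdot G'=\mathbf G^{\mathrm{ad}}(F)$ for every parabolic $\mathbf Q$; Mackey's formula then identifies the restriction to $G'$ of a parabolic induction over $\mathbf G^{\mathrm{ad}}(F)$ with a parabolic induction over $G'$, and combined with Frobenius reciprocity and the Clifford description this upgrades any relation "$\sigma'$ is a subquotient of a parabolic induction over $G'$" to "$\bar\pi$ is a subquotient of a parabolic induction over $\mathbf G^{\mathrm{ad}}(F)$", which is excluded. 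I would then inflate a chosen $\sigma'$ along $\psi$ to an irreducible admissible representation $\pi$ of $G$ that is trivial on $\mathbf Z(\mathbf G)(F)$, and check $\pi$ is supercuspidal: a parabolic induction $i_{\mathbf P(F)}^{G}(\tau)$ of which $\pi$ is a subquotient has trivial central character on the central subgroup $\mathbf Z(\mathbf G)(F)\subseteq\mathbf M(F)$, so $\tau$ descends to a Levi of the parabolic $\psi(\mathbf P(F))$ of $G'$, and then $i_{\mathbf P(F)}^{G}(\tau)$ is the inflation of a parabolic induction over $G'$ admitting $\sigma'$ as a subquotient — impossible by the previous step.

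For the second stage, write $\mathbf G^{\mathrm{ad}}$ as the direct product of its $F$-simple factors $\Res_{F_i/F}\mathbf H_i$, each $\mathbf H_i$ absolutely simple adjoint over a finite separable extension $F_i/F$; since $a=0$ every $F_i$ is a $p$-adic field, so the hypothesis furnishes an irreducible admissible supercuspidal $C$-representation $\pi_i$ of $\mathbf H_i(F_i)=(\Res_{F_i/F}\mathbf H_i)(F)$, and $\boxtimes_i\pi_i$ is one for $\mathbf G^{\mathrm{ad}}(F)=\prod_i\mathbf H_i(F_i)$ — irreducible and admissible since $C$ is algebraically closed, and supercuspidal because the parabolic subgroups of a direct product are the products of parabolic subgroups of the factors, so a parabolic induction over the product is an external tensor product of parabolic inductions over the factors and each of its irreducible subquotients is an external tensor product of irreducible subquotients, one of which is non-supercuspidal as soon as one parabolic is proper. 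Feeding $\bar\pi=\boxtimes_i\pi_i$ into the first stage produces the required representation of $G$. I expect the main obstacle to be making the transfer of supercuspidality run cleanly in characteristic $p$ — both along the restriction to the open finite-index subgroup $G'$ and along inflation — since this rests on Clifford theory and on Schur's lemma and the existence of central characters for irreducible admissible mod-$p$ representations, facts that must be quoted from the classification literature \cite{HenV}, and on dovetailing the coefficient-field reduction with the finiteness requirement of Proposition~\ref{prop Cfinite} when $c=p$; the geometric and Mackey-theoretic ingredients, by contrast, are insensitive to the characteristic.
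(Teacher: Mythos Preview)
Your approach is correct and structurally identical to the paper's \S\ref{S 2}: build a supercuspidal on $\mathbf G^{\mathrm{ad}}(F)$ as an external tensor product over the absolutely simple factors (restriction of scalars being the identity on $F$-points, parabolics of a product being products of parabolics), then transfer along the central isogeny via Clifford theory. Your Mackey/Frobenius argument that supercuspidality survives restriction to $G'=i(G)$ and inflation to $G$ is the content of Prop.~\ref{prop HG}; the paper phrases it through the compatibility $(\ind_Q^{G^{\mathrm{ad}}}\sigma)\circ i\simeq\ind_P^G(\sigma\circ i)$ rather than Mackey, but the substance is the same.

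The genuine difference is the finite-field case. The paper keeps $C$ fixed and treats the algebraically closed and finite cases in parallel: the central-extension step (Prop.~\ref{prop HtoG}) works over any $C$, while the tensor product over finite $C$ gets its own analysis (Prop.~\ref{prop GHfinite}), showing $\sigma\otimes_C\tau$ splits into $\gcd(d_\sigma,d_\tau)$ non-isomorphic irreducible admissible summands, each supercuspidal if and only if $\sigma,\tau$ are. You instead pass to $C^{alg}$ at the outset and defer descent to a finite subfield to an unargued ``one checks''. That descent does go through---each absolutely irreducible $C^{alg}$-constituent descends to a finite extension $C'/C$ by Fact~\ref{fact HenV}, the tensor product of absolutely irreducible $C'$-representations stays absolutely irreducible, and the central-extension step runs unchanged over $C'$---after which Prop.~\ref{prop Cfinite} takes you back to $C$. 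So your route trades the explicit finite-field tensor analysis for the descent machinery of \S\ref{S 5} and \S\ref{S 8}; it is a valid alternative, but the step you yourself flag as the ``main obstacle'' is exactly the one left unwritten.
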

  Summarizing our results,  we obtain:
   \begin{theorem} Assume  $(a,c)=(0,p)$.  Then  $G$ admits  an irreducible admissible supercuspidal  $C$-representation, if:
   
    $PGL(m,D)$ and   $PU(h)$ (as in Thm.\ref{thm main}) admit  an irreducible admissible supercuspidal   representation over some finite field of characteristic $p$.
  \end{theorem}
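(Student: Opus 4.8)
The plan is to deduce the statement by stringing together three results already recorded above: Theorem~\ref{thm main}, Proposition~\ref{prop Cfinite} (change of coefficient field) and Proposition~\ref{prop adj} (reduction to a simple adjoint group). No new argument is required; the work consists in arranging the hypotheses correctly, the one slightly delicate point being that Proposition~\ref{prop adj} asks for the coefficient field to be algebraically closed or finite, whereas the conclusion is wanted for an arbitrary field of characteristic $p$.

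First I would reduce to the case where $C$ is finite. By Proposition~\ref{prop Cfinite} applied to $G$ itself, it is enough to exhibit an irreducible admissible supercuspidal representation of $G$ over \emph{some} finite field of characteristic $p$: such a representation then exists over the given field $C$ as well. So I would fix from here on a finite field $C$ of characteristic $p$ and prove the statement for it.

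Next I would verify the hypothesis needed to feed Proposition~\ref{prop adj}, namely that every connected absolutely simple adjoint $p$-adic group $\mathbf H(F')$, with $F'$ of characteristic $0$ and residue characteristic $p$, admits an irreducible admissible supercuspidal $C$-representation. If $\mathbf H$ is not isomorphic to $PGL_m$ of a $d^2$-dimensional central division $F'$-algebra ($m\geq 2$, $d\geq 1$) nor to the adjoint unitary group of a split hermitian form in $3$ variables over a ramified quadratic extension of $F'$, this is precisely Theorem~\ref{thm main}, which is valid over every field of characteristic $p$, in particular over $C$. If $\mathbf H$ does belong to one of these two excluded families, the hypothesis of the present theorem gives an irreducible admissible supercuspidal representation of $\mathbf H(F')$ over some finite field of characteristic $p$, and Proposition~\ref{prop Cfinite} transports it to our fixed finite field $C$. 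Either way the required $C$-representation exists.

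Finally, since $a=0$ and $C$ is finite, Proposition~\ref{prop adj} applies and tells us that every connected reductive $p$-adic group --- in particular $G$ --- admits an irreducible admissible supercuspidal $C$-representation; together with the first reduction this is the theorem. The only place that even resembles an obstacle is the bookkeeping around coefficient fields: Proposition~\ref{prop Cfinite} must be used twice, once to descend from an arbitrary coefficient field to a finite one and once to move from the a priori finite field supplied by the hypothesis on $PGL(m,D)$ and $PU(h)$ to the chosen finite field $C$, while Theorem~\ref{thm main} needs no such step because it already holds over all characteristic-$p$ fields.
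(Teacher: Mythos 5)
Your argument is correct and is precisely what the paper intends: the theorem is introduced with the phrase ``Summarizing our results,'' and the intended proof is exactly the combination of Theorem~\ref{thm main}, Proposition~\ref{prop Cfinite}, and Proposition~\ref{prop adj} that you carry out, passing through a finite coefficient field so that Proposition~\ref{prop adj} applies and invoking Proposition~\ref{prop Cfinite} both to treat the excluded groups $PGL(m,D)$, $PU(h)$ and to return to an arbitrary field of characteristic $p$.
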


When $(a,c)=(0,p)$,  $PGL(m,D)$ and   $PU(h)$ should also admit  an irreducible admissible supercuspidal  $C$-representation.
 We miss  $PGL(m,D)$ and   $PU(h)$ because we do not know 
the  integrality properties of the unramified irreducible admissible complex representations  of $G$ corresponding to the integral reflections modules of the generalized affine Hecke algebras of supersingular reduction   modulo $p$. 
 The  missing cases will probably be completed by Herzig with a global method  for $PGL(m,D)$ and by Koziol with coefficient systems on the tree for   $PU(h)$.  
  
 \medskip 
{\it Ackowledgements}   I thank Pioline, Savin and the organizers of the conference on automorphic forms and string theory in Banff (2017-10-30) for emails, discussions and their invitation   who led me to look closely at   unramified minimal representations corresponding to the reflection modules  of the affine Hecke algebras \cite[Thm. 12.3]{GS}. Minimal representations  are important for string theorists   \cite{KPW}   and for mathematicians as they allow to construct analogues  of theta series. Their relation to supersingular representations, a complete surprise for me,  is at the origin of this work on a basic question raised  fifteen years ago  for modulo $p$ representations. 

I thank also Heiermann for a discussion on discrete modules,  Harder for  emails on discrete cocompact subgroups, Waldspurger for reminding me the antipode and a reference,   Henniart for providing the proof of Prop.\ref{prop GtimesH} b),   Herzig for writing  a  draft of his global method for  $GL(m,F)$, Herzig and Koziol for  working on   the missing cases.

%\cite[II.5.2 Prop.14]{SerreCohGal}, \cite[Thm.6.14]{Platonov-Rapinchuk}. 

 \section{Iwahori Hecke ring } 
 
We review in this section the parts of the theory of Iwahori Hecke algebras \cite{Vig1}, \cite{Vigcenter},  \cite{Vigss}  appearing in  the proof of the key proposition \ref{prop Gammasuper}.

Let $\bf G$ be a reductive   connected $F$-group,  $\bf T$   a  maximal $F$-split subtorus of $\bf G$,  $\bf B $ a minimal $F$-parabolic subgroup of $\bf G$ containing $\bf T$, and $x_0$ a   special point of the apartment of  the adjoint Bruhat-Tits buiding defined by $\bf T$. 
Associated to the triple  ($\bf G$, $\bf T$, $\bf B $, $x_0$) are:
the center  $\bf Z(G)$  of $\bf G$,  the root system $\Phi$, the set of simple roots $\Delta$, the  $\bf G$-centralizer  $\bf Z$   of  $\bf T$,  the normalizer $\bf N$, the  unipotent radical $\bf U$  of   $\bf B $, (hence ${\bf B}=\bf Z \bf U$),    the triples  $({\bf G^{sc}}, {\bf T^{sc}}, {\bf B^{sc}}) $ and   $({\bf G^{ad}}, {\bf T^{ad}}, {\bf B^{ad}}) $ for the simply connected covering and the adjoint group of the derived subgroup of $\bf G$,   the  natural homomorphisms $G^{sc}:={\bf G^{sc}}(F)\xrightarrow{ i_{sc}}G={\bf G}(F)\xrightarrow{ i^{ad}}G^{ad}={\bf G^{ad}}(F)$, an alcove $\mathcal C$ of the apartment with vertex $x_0$,  the Iwahori subgroups  $\mathfrak B, \mathfrak B^{sc},\mathfrak B^{ad}$   of $G,  G^{sc}, G^{ad}$  fixing  $\mathcal C$, the  Iwahori Hecke ring  $H (G, \mathfrak B):=\End_{\mathbb Z G}\mathbb Z[\mathfrak B \backslash G] $ and similarly for the simply connected and adjoint semi-simple groups.

 The  natural ring homomomorphism $H(G^{sc}, \mathfrak B^{sc}) \to H(G , \mathfrak B )
$ induced by  $i_{sc}$  is injective and we identify $H(G^{sc}, \mathfrak B^{sc}) $ with a subring of $H(G , \mathfrak B )$.
   There is a  canonical   isomorphism  $H(G^{sc}, \mathfrak B^{sc})  \xrightarrow{j^{sc}} H (W,S, q_s)$ where 
$H:=H (W,S, q_s) $ is the Hecke ring of  an  affine Coxeter system $(W,S)$ with  parameters  $(q_s=q^{d(s)})_{s\in S} $ where $q$ is the residue field of $F$ and the $d(s)$ are integers $\geq 1$.
The  Dynkin diagram of   $(W,S)$  is the completed Dynkin diagram $Dyn$ of a  reduced root system $\Sigma$. The  image    $\Omega=\Omega_G $  of the Kottwitz morphism of $G$ acts  on  the Dynkin diagram $Dyn$ decorated  with the parameters $(d_s)_{s\in S} $, and the isomorphism $j^{sc}$ extends to an isomorphism
 \begin{equation}\label{eq Had}H (G, \mathfrak B) \xrightarrow{j} \mathbb Z[\Omega] \ltimes  H (W,S, q_s),  
 \end{equation} 
  The Iwahori Hecke ring of  $G $ is  determined by the type of $\Sigma$, the parameters  and the group $\Omega$ acting on the decorated  Dynkin diagram $Dyn$. The quotient of   $\Omega $ by the fixer of  $\mathcal C$ in $\Omega$ is isomorphic to a subgroup $\Psi$ of the group of automorphisms $\Aut(W,S,d_s) $ of the decorated Dynkin diagram.   
  The  generalized affine Hecke ring $\tilde H:=\mathbb Z[\Omega] \ltimes  H (W,S, q_s) 
$   is a free $\mathbb Z$-module of basis $T_w$ for $w\in  \tilde W :=W\rtimes \Omega$  satisfying the braid  and quadratic relations:
$$
  T_w T_{w'}=T_{ww' } \  \ \text{for} \ \  w,w'\in  \tilde W, \  \ell(w)+\ell(w')=\ell(ww'),
$$ 
\begin{equation}\label{eq Tsq}(T_{s}- q_sT_1)(T_{s}+T_1)=0  \ \  \text{for } \  s\in S, 
  \end{equation}
where $\ell$ is the length on $W$ associated to $S$ extended to $\tilde W $  by $\ell(uw)=\ell(wu)=\ell(w)$ for $u\in \Omega, w\in W$.  
The linear map $T_w\mapsto (-1)^{\ell (w)}T_w^*$ for $w\in \tilde W$,  where for $w=us_1\ldots s_n $ with $ u\in \Omega, s_i\in S,n=\ell(w)$,  
\begin{equation}T_w T^*_{w^{-1}}=q_w, \ \text{where} \ q_w:=q_{s_1}\ldots q_{s_n}, \quad T_s^*:=T_s-q_s+1, \quad T_w^*:= T_u T_{s_1}^*\ldots T_{s_n}^* ,
 \end{equation}
   is an automorphism of  $\tilde H$.

  The   unique parahoric subgroup $Z_0$ of  $Z:=\bf Z(F)$ is contained in the  maximal compact subgroup $\tilde Z_0$.  When $\bf G$ is $F$-split or semi-simple simply connected, $Z_0=\tilde Z_0$. The group  $N/Z_0$  is isomorphic to $\tilde W$,   acts on the apartment and forms a system of representatives of the double classes of $G$ modulo $\mathfrak B$.
The subgroup $\Lambda=Z/Z_0$ of $N/Z_0$ is  commutative finitely generated  of torsion $\tilde Z_0/Z_0$ acting by translation on the apartment,  the quotient map $N/Z_0\to N/Z$ splits, identifying   the (finite) Weyl group $W_0$ of $\Sigma$ with the fixer in $W$ of a special vertex of the alcove  $\mathcal C$.  The semi-direct product $\Lambda \rtimes W_0$ is  equal to $\tilde W$ and  
$\Lambda^{sc}\rtimes W_0=W$ where  $ \Lambda^{sc}:= \Lambda \cap W$. 
An element  $\lambda\in \Lambda $ is called dominant  (and $\lambda^{-1}$ anti-dominant),  if $z(U\cap  {\mathfrak B}) z^{-1} \subset (U\cap  {\mathfrak B}) $ for $z\in Z$ lifting $\lambda$. The dominant monoid $\Lambda^+$ consists of dominant elements of  $\Lambda$.
 
The  cocharacter group $ X_*(T)$ of $T$ is isomorphic to  $T/T_0 \simeq TZ_0/Z_0=\Lambda_T$    by the  $W_0$-equivariant map $\mu \mapsto \lambda_\mu:=\mu(p_F)Z_0/Z_0 $ for a  fixed  an uniformizer $p_F$ of $F$. A cocharacter
$\mu $ is  dominant   if $\lambda_\mu$ is.
For  $\lambda\in 
\Lambda$ and   $n\in \mathbb N$ large, $\lambda^n \in \Lambda_T$, and $ \langle \chi, \lambda ^n\rangle$ is defined for any character $\chi \in X^*(T)$.   The fixer of the alcove  $\mathcal C$ in $\Lambda$  is  $\Lambda_{Z(G)\tilde Z_0}=Z(G)\tilde Z_0/ Z_0$ and

 $\Lambda_{Z(G)\tilde Z_0} =\{\lambda\in \Lambda \ | \  \langle \alpha, \lambda ^n\rangle=0 \ \text{ $n$ large such that $ \lambda^n \in \Lambda_T$ and } \ \alpha \in \Delta\}$. 
 
 \begin{lemma}\label{lem fix}
  The fixer of the alcove $\mathcal C$ in $\Omega$  is  $\Lambda_{Z(G)\tilde Z_0}$.
   \end{lemma}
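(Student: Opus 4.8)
The plan is to prove the two inclusions directly, the key structural fact being that $\Omega$ is precisely the set of length-$0$ elements of $\tilde W$, equivalently the setwise stabiliser of the alcove $\mathcal C$. First I would record this: since $\ell(wu)=\ell(w)$ for $u\in\Omega$, an element $g=wu$ with $w\in W$, $u\in\Omega$ of length $0$ has $\ell(w)=0$, so $w=1$ and $g\in\Omega$; conversely every element of $\Omega$ has length $0$ by the definition of $\ell$ on $\tilde W$. As $W$ acts simply transitively on the alcoves of the apartment (being an affine Coxeter group), the length-$0$ elements of $\tilde W$ are exactly those that stabilise $\mathcal C$; hence $\Omega$ is this stabiliser, and the fixer of $\mathcal C$ in $\Omega$ (the kernel of the action of $\Omega$ on the decorated Dynkin diagram, i.e. on the walls of $\mathcal C$) consists of those $\omega\in\Omega$ fixing every wall of $\mathcal C$, hence every vertex, hence all of $\mathcal C$.

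For the inclusion ``$\subseteq$'', let $\omega$ lie in the fixer of $\mathcal C$ in $\Omega$ and write $\omega=\lambda w_0$ with $\lambda\in\Lambda$, $w_0\in W_0$, using the decomposition $\tilde W=\Lambda\rtimes W_0$. Since $\omega$ fixes the special vertex $x_0$ of $\mathcal C$ and $W_0$ fixes $x_0$, we get $\lambda(x_0)=\omega(x_0)=x_0$; a translation of an affine space with a fixed point is the identity (here one uses that $\Lambda$ acts by translations), so $\omega$ and $w_0$ induce the same affine transformation of the apartment. As $\omega$ stabilises $\mathcal C$, so does $w_0\in W$, and simple transitivity of $W$ on alcoves forces $w_0=1$, i.e. $\omega=\lambda\in\Lambda$. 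Thus $\omega$ lies in the fixer of $\mathcal C$ in $\Lambda$, which is $\Lambda_{Z(G)\tilde Z_0}$ by the identity recalled just before the statement. For ``$\supseteq$'', any $\lambda\in\Lambda_{Z(G)\tilde Z_0}$ fixes $\mathcal C$, hence stabilises $\mathcal C$, hence has length $0$, hence lies in $\Omega$; and, fixing $\mathcal C$, it then lies in the fixer of $\mathcal C$ in $\Omega$.

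I do not expect a serious obstacle. The points needing care are the bookkeeping of what ``fixer of the alcove'' means in $\Omega$ as opposed to in $\Lambda$ — pointwise fixing versus setwise stabilising, reconciled because $\mathcal C$ affinely spans the apartment and $\Omega$ acts on $\mathcal C$ through permutations of its walls — together with the identification of $\Omega$ with the length-$0$ part of $\tilde W$. Beyond these, the argument is just the elementary geometry of the affine Weyl group $W$ acting simply transitively on alcoves, combined with the already-recorded description of the fixer of $\mathcal C$ in $\Lambda$.
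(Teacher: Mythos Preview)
Your argument is correct and follows essentially the same route as the paper: decompose an element of the fixer as $\lambda w_0$ with $\lambda\in\Lambda$, $w_0\in W_0$, use that $w_0$ fixes the special vertex $x_0$ to deduce that the translation $\lambda$ fixes $x_0$ and hence acts trivially on the apartment, conclude that $w_0$ stabilises $\mathcal C$ and is therefore trivial, so the element lies in $\Lambda$ and hence in $\Lambda_{Z(G)\tilde Z_0}$. The paper's proof is terser and omits the reverse inclusion and the preliminary identification of $\Omega$ with the length-$0$ elements, but the substance is the same.
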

 
 \begin{proof}
Write $u\in \Omega$ as  $u=\lambda w_0$ with $\lambda\in \Lambda, w_0\in W_0$. As $w_0$ fix a special point of $\mathcal C$, $\lambda$ does also. As $\lambda$ acts by translation on the apartment, $\lambda$ fixes $\mathcal C$. As  $\lambda$ and $u$ fix $\mathcal C$,  $w_0$ fixes also $\mathcal C$ hence $w_0=1$. \end{proof}

The  action of $\Omega$ on the alcove $\mathcal C$ and  the embeddings of $\Lambda$ and $\Omega$ into  $\tilde W$ induce isomorphisms 
\begin{equation}\label{eq fix} \Omega /\Lambda_{Z(G)\tilde Z_0 } \to \Psi, \quad  \Lambda /(\Lambda_{Z(G)\tilde Z_0} \times  \Lambda^{sc})
 \to \tilde W /(\Lambda_{Z(G)\tilde Z_0} \times  W) \leftarrow \Omega/\Lambda_{Z(G)\tilde Z_0 }.
 \end{equation}

 \begin{lemma}\label{lem fin} The subgroup $\Lambda_{Z(G) } \times  \Lambda^{sc}$ of $\Lambda$ is finitely generated of finite index. 
 
 The submonoid $\Lambda_{Z(G) } \times  \Lambda_{sc}^+$ of  the dominant monoid $\Lambda^+$  is finitely generated of finite index. 
 \end{lemma}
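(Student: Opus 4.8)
The plan is to prove both statements by reducing the question about the group $\Lambda$ to a question about finitely generated abelian groups and their quotients, using the structure maps \eqref{eq fix}.

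For the first statement, recall that $\Lambda$ is itself finitely generated abelian. The subgroup $\Lambda^{sc} = \Lambda \cap W$ is finitely generated as a subgroup of a finitely generated abelian group, and likewise $\Lambda_{Z(G)} = Z(G)/Z_0 \cap \Lambda$ (more precisely the image of $Z(G)/Z_0$) is finitely generated. Hence $\Lambda_{Z(G)} \times \Lambda^{sc}$ is finitely generated; the only content is that the natural map $\Lambda_{Z(G)} \times \Lambda^{sc} \to \Lambda$ is injective, i.e.\ that $\Lambda_{Z(G)} \cap \Lambda^{sc}$ is trivial inside $\Lambda$ — this follows because an element of $\Lambda^{sc} = \Lambda \cap W$ fixing a special point would lie in $W_0 \cap \Lambda^{sc}$ which is trivial (elements of $\Lambda$ are translations), while elements of $\Lambda_{Z(G)}$ fix the whole alcove. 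For finite index, I would combine the two isomorphisms in \eqref{eq fix}: the first gives $\Lambda/\Lambda_{Z(G)\tilde Z_0} \cong$ (something mapping onto) a subgroup of the finite group $\Psi \subset \Aut(W,S,d_s)$, and the middle isomorphism identifies $\Lambda/(\Lambda_{Z(G)\tilde Z_0} \times \Lambda^{sc})$ with a subgroup of the finite group $\tilde W/(\Lambda_{Z(G)\tilde Z_0} \times W)$; since $\Lambda_{Z(G)\tilde Z_0}/\Lambda_{Z(G)}$ is the finite group $\tilde Z_0/Z_0$ (trivial in the split or simply connected case, finite in general), $\Lambda_{Z(G)} \times \Lambda^{sc}$ has finite index in $\Lambda_{Z(G)\tilde Z_0} \times \Lambda^{sc}$, which in turn has finite index in $\Lambda$.

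For the second statement, I would first observe that $\Lambda_{Z(G)} \subset \Lambda^+$, since $Z(G)$ acts trivially on the apartment and hence trivially (by conjugation) on $U \cap \mathfrak B$, so every element of $\Lambda_{Z(G)}$ is dominant; thus $\Lambda_{Z(G)} \times \Lambda_{sc}^+$ does embed in $\Lambda^+$. The submonoid $\Lambda_{sc}^+ = \Lambda^{sc} \cap \Lambda^+$ is the monoid of dominant coweights in the coroot lattice direction, which is finitely generated (it is the intersection of the lattice $\Lambda^{sc}$ with the dominant cone, a standard fact: such ``lattice points in a rational polyhedral cone'' monoids are finitely generated by Gordan's lemma), and $\Lambda_{Z(G)}$ is a finitely generated group hence a finitely generated monoid; so the product is finitely generated. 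For finite index inside $\Lambda^+$, I would argue: given $\lambda \in \Lambda^+$, by the finite-index statement already proved there is an integer $N$ (independent of $\lambda$, the exponent of the finite quotient $\Lambda/(\Lambda_{Z(G)} \times \Lambda^{sc})$ times a bound) such that $\lambda^N \in \Lambda_{Z(G)} \times \Lambda^{sc}$; writing $\lambda^N = z \cdot \mu$ with $z \in \Lambda_{Z(G)}$, $\mu \in \Lambda^{sc}$, dominance of $\lambda$ forces dominance of $\lambda^N$ hence of $\mu$ (since $z$ is central and contributes nothing to the pairing with simple roots), so $\lambda^N \in \Lambda_{Z(G)} \times \Lambda_{sc}^+$. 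Then finitely many coset representatives $\lambda_1, \dots, \lambda_r$ of $\Lambda^+$ modulo this submonoid together with the generators of the submonoid generate $\Lambda^+$, giving both finite index and finite generation of $\Lambda^+$ itself as a byproduct.

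The main obstacle I anticipate is the finite-index part of the second statement, specifically making the passage from ``$\lambda^N$ lies in the subgroup'' to ``$\lambda^N$ lies in the sub\emph{monoid} $\Lambda_{Z(G)} \times \Lambda_{sc}^+$'' fully rigorous, and then deducing finite index of the submonoid in $\Lambda^+$ (as opposed to just finite index of the generated \emph{group}) — monoids can have finite-index generated groups while the submonoid itself has infinite index, so one needs the dominance cone to be preserved under the relevant powers. This is handled by the observation that the finite quotient group acts on the set of cosets and that taking $N$th powers lands a dominant element in the correct ``chamber'' of the submonoid; the pairing $\langle \alpha, \lambda^n \rangle \geq 0$ for $\alpha \in \Delta$ is stable under multiplying the exponent, which is what makes the argument go through. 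I would also need to be slightly careful that $\Lambda_{sc}^+$ rather than $(\Lambda^{sc})^+$-in-some-other-sense is the correct object, but this is just unwinding the definition of dominant given in the text via $z(U \cap \mathfrak B)z^{-1} \subset U \cap \mathfrak B$.
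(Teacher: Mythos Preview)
Your first assertion is correct and matches the paper's argument. For the second, you have the right ingredients --- Gordan's lemma and the fact that $N\lambda \in \Lambda_{Z(G)}\times\Lambda_{sc}^+$ for every $\lambda\in\Lambda^+$ --- but the logic is circular: you assert ``finitely many coset representatives $\lambda_1,\dots,\lambda_r$'' and then derive finite generation of $\Lambda^+$ as a byproduct, yet nothing before that point establishes that the set of cosets is finite. The $N$-th--power property alone does not force finite index of a submonoid without finite generation of the ambient monoid (take $2M\subset M=\bigoplus_{i\ge 1}\mathbb Z_{\ge 0}$: every element doubled lies in $2M$, but the index is infinite). You correctly flag this as the main obstacle, but the resolution you propose --- an action of the finite quotient group on cosets --- does not settle it.

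The fix is to apply Gordan to $\Lambda^+$ itself, not only to $\Lambda_{sc}^+$: since $\Lambda^+$ is the intersection of the lattice $\Lambda$ with the rational polyhedral dominant cone, it is finitely generated, say by $g_1,\dots,g_k$. Then your $N$-th--power observation gives $Ng_j\in\Lambda_{Z(G)}\times\Lambda_{sc}^+$ for each $j$, so any $\sum_j a_j g_j\in\Lambda^+$ lies in $\sum_j (a_j\bmod N)g_j+(\Lambda_{Z(G)}\times\Lambda_{sc}^+)$, giving at most $N^k$ cosets. This is what the paper's one-line appeal to Gordan's lemma is pointing at.
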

 \begin{proof} The commutative group $\Lambda_{Z(G) } \times  \Lambda^{sc}$ is finitely generated and a finite index subgroup of $\Lambda$, as  $\tilde Z_0/Z_0$ is finite and 
  \eqref{eq fix} implies that  $\Lambda /( \Lambda_{Z(G)\tilde Z_0 } \times  \Lambda^{sc})\simeq \Psi$. Gordan's lemma implies  the second assertion (as in the proof of \cite[7.2 Lemma]{HV1}). 
\end{proof}

 The   $\tilde W$-conjugacy class of  $\lambda \in \Lambda$  is the $W_0$-orbit   of $\lambda$.   A basis of the center of $ \tilde  H $ \cite[Thm.1.2]{Vigcenter} is
$$\sum_{\lambda\in \mathcal O} E_{\lambda} \quad \text{for the  $W_0$-orbits } \ \mathcal O \subset \Lambda,$$
where  $E_\lambda $  for  $\lambda\in \Lambda$ are the integral Bernstein elements    of $\tilde H$ \cite[Cor. 5.28, Ex.5.30]{Vig1}:
  \begin{align}\label{eq	 E}E_{\lambda}=\begin{cases} T_\lambda \   \text{if}  \   \lambda \ \text{ is anti-dominant}\\  T_\lambda^* \   \text{if}  \   \lambda  \ \text{is dominant}
  \end{cases},  \quad E_{\lambda_1} E_{\lambda_2} = (q_{\lambda_1}q_{\lambda_2}q_{\lambda_1 \lambda_2}^{-1})^{1/2} E_{\lambda_1 \lambda_2}  \ \text{for} \  \lambda_1, \lambda_2\in \Lambda.
\end{align}
When $\lambda_1, \lambda_2$ are both dominant (or anti-dominant), $ E_{\lambda_1} E_{\lambda_2} =E_{\lambda_1 \lambda_2} $.
 For $\mu\in X_*(T)$, let  $\mathcal O_\mu$  denote    the $W_0$-orbit  of $\lambda_\mu$  and  write
 $z_\mu:= \sum_{\lambda\in \mathcal O_\mu} E_{\lambda} $.  Any $W_0$-orbit $\mathcal O_\mu$ contains a unique dominant (resp. anti-dominant) cocharacter $\mu^+$  (resp. $\mu^-$), and  $z_\mu=z_{\mu^+}=z_{\mu^-}$.    
 
 The   invertible elements in   the  dominant monoids  $\Lambda^+$ and $ X_*(T)^+$ are the subgroups $\Lambda_{Z(G)\tilde Z_0}$ and $ \cap _{\alpha\in \Delta}\Ker \alpha$.    Only the trivial element is invertible in
   the dominant monoid $\Lambda_{sc}^+$. Write  $\Lambda_{Z(G)}:=Z(G)Z_0/Z_0$.

   The generalized affine ring $\tilde H$ contains the commutative subring $\mathcal A$ of  $\mathbb Z$-basis
   $(E_{\lambda})_{\lambda\in \Lambda}$. When $G=Z$,  the Bernstein elements are simply the classical elements $T^Z_\lambda$, and the Iwahori Hecke ring  $H (Z,Z_0)$ is isomorphic to $\mathbb Z[\Lambda]$.  In general, $\mathcal A$ is not isomorphic to $\mathbb Z[\Lambda]$ but   the subring  $\mathcal A^+$ of basis $(E_{\lambda})_{\lambda\in \Lambda^+}$ is   isomorphic to $\mathbb Z[\Lambda^+]$.
Denote by  $\mathcal A^{sc},  \mathcal A_T,  \mathcal A_{sc}^+, \mathcal A^+_T$ the subrings of  respective bases $(E_{\lambda})_{\lambda \in  \Lambda_{sc}}$, $(E_{\lambda_\mu})_{\mu\in X_*(T)}$, $(E_{\lambda})_{\lambda\in \Lambda_{sc}^+}$,   $(E_{\lambda_\mu})_{\mu\in X_*(T)^+}$.

 \begin{lemma}\label{lem fixA}   $\mathcal A $  is  finitely generated as an  $\mathcal A_T$-module and  as a  $ \mathbb Z[\Lambda_{Z(G)} ]\times \mathcal A_{sc} $-module. 
  
  $\mathcal A^+ $  is   finitely generated  as an $\mathcal A_T^+$-module and as a $ \mathbb Z[\Lambda_{Z(G) }]\times \mathcal A_{sc}^+ $-module.  
    \end{lemma}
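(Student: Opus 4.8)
The plan is to deduce all four assertions from the elementary fact that a commutative ring which is finitely generated as an algebra over a subring $R$, by elements integral over $R$, is finitely generated as an $R$-module. So two inputs are needed: that $\mathcal A$ and $\mathcal A^+$ are finitely generated $\mathbb Z$-algebras, and that for each of the four subrings $R$ occurring in the statement some finite set of algebra generators of $\mathcal A$, resp.\ $\mathcal A^+$, is integral over $R$.

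For the first input, $\mathcal A^+\cong\mathbb Z[\Lambda^+]$ and $\Lambda^+$ is a finitely generated monoid (Gordan's lemma, or Lemma~\ref{lem fin}), so $\mathcal A^+$ is a finitely generated $\mathbb Z$-algebra. For $\mathcal A$, I would cover $\Lambda$ by its intersections with the finitely many closed Weyl chambers of $W_0$ in the apartment; each such intersection is a finitely generated monoid by Gordan's lemma. The key point is that if $\lambda_1,\lambda_2$ lie in the closure of a common Weyl chamber then $\ell(\lambda_1\lambda_2)=\ell(\lambda_1)+\ell(\lambda_2)$ (the sign of $\langle\sigma,\,\cdot\,\rangle$ is constant there for each $\sigma\in\Sigma$), so the product formula $E_{\lambda_1}E_{\lambda_2}=(q_{\lambda_1}q_{\lambda_2}q_{\lambda_1\lambda_2}^{-1})^{1/2}E_{\lambda_1\lambda_2}$ becomes $E_{\lambda_1}E_{\lambda_2}=E_{\lambda_1\lambda_2}$ there; writing an element of such an intersection as a product of monoid generators then expresses its Bernstein element as a monomial in the corresponding finitely many $E_g$, and these $E_g$, over all chambers, generate $\mathcal A$ as a $\mathbb Z$-algebra.

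For the second input, iterating the product formula gives, for $g\in\Lambda$ and $k\ge1$, an identity $E_g^k=c\,E_{g^k}$ with $c\in\mathbb Z$ (a power of $q$; in fact $c=1$ when $g$ is dominant). Let $n$, resp.\ $m$, denote the exponent of the finite group $\Lambda/\Lambda_T$, resp.\ of $\Lambda/(\Lambda_{Z(G)}\times\Lambda^{sc})$ (finite by the discussion above, resp.\ by Lemma~\ref{lem fin}). For any of the algebra generators $E_g$ produced above, $g^n\in\Lambda_T$, hence $E_{g^n}\in\mathcal A_T$ and $E_g^n=c\,E_{g^n}\in\mathcal A_T$, so $E_g$ is integral over $\mathcal A_T$; this gives the first assertion, and running the same argument with generators of $\Lambda^+$ (for which $g^n$ is moreover dominant, so $E_{g^n}\in\mathcal A_T^+$) gives it for $\mathcal A^+$ over $\mathcal A_T^+$. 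For the statements involving $\Lambda_{Z(G)}$, write $g^m=z\sigma$ with $z\in\Lambda_{Z(G)}$ and $\sigma\in\Lambda^{sc}$; since $z$, being central, acts trivially on the apartment, $\ell(z)=0$ and $\ell(z\sigma)=\ell(\sigma)$, so $E_{g^m}=E_zE_\sigma$ lies in $\mathbb Z[\Lambda_{Z(G)}]\times\mathcal A_{sc}$, and in $\mathbb Z[\Lambda_{Z(G)}]\times\mathcal A_{sc}^+$ when $g$ (hence $\sigma$) is dominant; therefore $E_g^m=c\,E_{g^m}$ lies there too and each generator is integral, giving the last two assertions.

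The one real obstacle I anticipate is the first input: unlike $\mathcal A^+$, the ring $\mathcal A$ is not the group ring $\mathbb Z[\Lambda]$ but a $q$-twisted form of it, so finite generation of $\Lambda$ does not directly exhibit $\mathcal A$ as a finitely generated $\mathbb Z$-algebra, and the chamber-by-chamber application of Gordan's lemma is what provides a genuine finite generating set with untwisted multiplication. Granting that, the integrality input is a short computation with the product formula, using only that finite-index subgroups have finite exponent quotients and that central translations have length zero.
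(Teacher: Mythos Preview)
Your argument is correct and is essentially what the paper's one-line proof is pointing at: the paper simply cites \cite[Lemmas~2.14,~2.15]{Vigcenter} for the $\mathcal A_T$-statements and invokes Lemma~\ref{lem fin} (the finite-index statement) for the $\mathbb Z[\Lambda_{Z(G)}]\times\mathcal A_{sc}$-statements, and your write-up unpacks exactly those two ingredients --- finite index of the relevant sublattice/submonoid, together with untwisted multiplication of Bernstein elements inside a closed Weyl chamber.

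Two minor remarks. First, since any $g\in\Lambda$ and all its positive powers lie in the same closed face of the Weyl fan, lengths are additive and one always has $E_g^{\,k}=E_{g^k}$; the constant $c$ you carry is in fact $1$ in every instance you need it, not just for dominant $g$. Second, with the same chamber-by-chamber bookkeeping one can bypass the integrality step and exhibit explicit module generators directly: inside each closed chamber take finitely many monoid-coset representatives for $\Lambda_T$ (resp.\ $\Lambda_{Z(G)}\times\Lambda^{sc}$), and then $E_{\lambda_i}E_\mu=E_{\lambda_i\mu}$ there shows that the $E_{\lambda_i}$ generate. This is likely what the cited reference does, but your integrality packaging is equally valid and no less efficient.
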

\begin{proof} For $T$  \cite[Lemma 2.14, 2.15]{Vigcenter}. Otherwise, Lemma \ref{lem fin}. \end{proof}

\section{Supersingular and discrete modules}
Let $C$ be a field of characteristic $c$ and the  Iwahori Hecke $C$-algebras 
$ H_C(G^{sc},\mathfrak B^{sc}):=C\otimes_{\mathbb Z} H(G^{sc},\mathfrak B^{sc})$  and $ H_C(G,\mathfrak B):=C\otimes_{\mathbb Z} H(G,\mathfrak B)$, 
  isomorphic to the   affine  and generalized affine Hecke $C$-algebras   $H_C=C\otimes_{\mathbb Z} H  (W,S, q_s)$ and $\tilde H_C= C[\Omega] \ltimes_{\mathbb Z}  H  (W,S, q_s)$. 
  
  This section  introduces the supersingular  right $\tilde H_C$-modules when $c=p$ (there are none  when $c\neq p$) and   the discrete simple  right $ H_{\mathbb C}(G,\mathfrak B)$ modules.

 \begin{definition} Let $M$ be  a non-zero right $\tilde  H_C $-module. 
An element $v\in M$ is called supersingular if and only if $v z_\mu^n=0$  for all dominant  $\mu\in X_*(T) $ with $\mu^{-1}$ not dominant, and some large positive integer $n$.
The $ \tilde H_C $-module $M$ is called supersingular when all its elements are supersingular\footnote{In \cite[Def. 6.10]{Vigss} there is a different definition:  there exists $n>0$ with $M z_\mu^n=0$ for all $\mu$ not invertible in $X_*(T)^+$}.
\end{definition}
 
\begin{remark} {\rm   $v$ is supersingular  if and only if  $v z_\lambda^n=0$ for any $\lambda \in  \Lambda \setminus \Lambda_{Z(G)\tilde Z_0}$ and  large $n$. 
We can restrict to $\mu$ (or $\lambda$) dominant, or anti-dominant.  }
\end{remark}
 
\begin{fact}\label{fact}   

-  A simple $ \tilde H _C $-module $M$  is finite dimensional, and is semi-simple as an  $ H_C $-module.

-  A  $ \tilde  H _C $-module is supersingular if and only if its restriction to  $H_C   $ is supersingular   \cite[Cor.6.13]{Vigss}.

-   The simple  supersingular $H_C $-modules are\footnote{There are no non-zero supersingular modules if $c\neq p$} the characters  which are not special or trivial   {\rm (see the next section)} when $C$ is     ``large''  of characteristic $c=p$  \cite[Cor.6.13]{Vigss}. 
\end{fact}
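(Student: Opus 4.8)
The plan is to trace each of the three assertions to an input already available: the first I would derive by hand from the Bernstein presentation and Clifford theory, while for the last two I would invoke \cite[Cor.~6.13]{Vigss}, which is where the substance lies.

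\emph{Finite-dimensionality.} The key point is that $\tilde H_C$ is module-finite over a finitely generated commutative central $C$-subalgebra. Writing $\tilde W=\Lambda\rtimes W_0$, the Bernstein basis $(E_\lambda)_{\lambda\in\Lambda}$ realizes $\tilde H$ as a free $\mathcal A$-module on the finite Hecke part (rank $|W_0|$), while by \cite[Thm.~1.2]{Vigcenter} the centre of $\tilde H$ has $\mathbb Z$-basis $\sum_{\lambda\in\mathcal O}E_\lambda$ over the $W_0$-orbits $\mathcal O\subset\Lambda$; since $q_\lambda$ depends only on the $W_0$-orbit of $\lambda$, this centre is the invariant ring $\mathcal A^{W_0}$. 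Now $\mathcal A$ is a finitely generated ring (Gordan's lemma, cf.\ Lemmas~\ref{lem fin} and~\ref{lem fixA}), so by Noether's theorem $\mathcal A^{W_0}$ is finitely generated and $\mathcal A$ is finite over it; hence, after base change, $\tilde H_C$ is a finite module over the finitely generated commutative central subalgebra $Z:=C\otimes\mathcal A^{W_0}$. For a simple $\tilde H_C$-module $M$ one has $M=\tilde H_C\,m$ ($0\ne m\in M$), so $M$ is a finitely generated $Z$-module; the ideal $\mathfrak m=\mathrm{Ann}_Z(M)$ is maximal --- a nonzero non-unit $\bar s\in Z/\mathfrak m$ would give $0\ne\bar sM$, hence $\bar sM=M$ by simplicity, and the determinant trick applied to $M=\bar sM$ would make $\bar s$ a unit --- so $M$ is finite-dimensional over the residue field $Z/\mathfrak m$, which is a finite extension of $C$ by Zariski's lemma; thus $\dim_C M<\infty$.

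\emph{Semisimplicity over $H_C$.} I would use Clifford theory for the crossed product $\tilde H_C=C[\Omega]\ltimes H_C$. Here $W$ is normal in $\tilde W=W\rtimes\Omega$, and every $u\in\Omega$ stabilises the alcove $\mathcal C$ and so has length $0$; hence $T_u$ is a unit of $\tilde H_C$ (inverse $T_{u^{-1}}$) and conjugation by $T_u$ is a ring automorphism of $H_C$ (it permutes the $T_s$ following the decorated diagram). Therefore each $T_u$ carries a simple $H_C$-submodule of $M$ to a simple $H_C$-submodule, so $\mathrm{Soc}_{H_C}(M)$ --- nonzero because $M$ is finite-dimensional --- is stable under $H_C$ and all $T_u$, hence is a $\tilde H_C$-submodule of the simple module $M$, so it equals $M$. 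No finiteness of $\Omega$ enters this argument.

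\emph{The last two assertions} --- that supersingularity of a $\tilde H_C$-module is detected after restriction to $H_C$, and that over a large field of characteristic $p$ the simple supersingular $H_C$-modules are precisely the characters that are neither trivial nor special --- are \cite[Cor.~6.13]{Vigss}, and I would invoke them directly (combined with the first assertion this also reconciles the two definitions of supersingular in the footnote to the Definition). The main obstacle is therefore not internal to this Fact but lies in that cited classification; within the present framework the only real care needed is bookkeeping: matching the normalisation of the Bernstein elements (whence the role of $\mathbb Z[q^{1/2}]$) and remembering that the parameters here are the specialisations $q_s=q^{d(s)}$ with $q\equiv0\bmod p$, so that $T_s$ is not invertible although the $T_u$ ($u\in\Omega$) still are.
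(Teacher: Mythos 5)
The paper offers no argument of its own for this Fact: the first bullet is stated as standard, and the last two are attributed to \cite[Cor.6.13]{Vigss}, exactly the reference you also invoke. Your contribution is to actually supply a proof of the first assertion, and what you write is correct and is essentially the standard argument. Finite-dimensionality: once one knows $\tilde H_C$ is module-finite over a finitely generated commutative central subalgebra, a simple module $M$ is cyclic hence finitely generated over that subalgebra, the determinant trick shows the central annihilator $\mathfrak m$ is maximal, and Zariski's lemma makes $Z/\mathfrak m$ finite over $C$. Semisimplicity over $H_C$: since each $T_u$ ($u\in\Omega$) is a unit with $T_uT_sT_u^{-1}=T_{u(s)}$, conjugation by $T_u$ is an automorphism of $H_C$, hence permutes simple $H_C$-submodules; the $H_C$-socle of $M$ is therefore a nonzero $\tilde H_C$-submodule of the simple module $M$, so equals $M$. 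Both arguments are sound, and you are right that finiteness of $\Omega$ is not needed for the second.

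One small gloss worth tightening: Lemmas~\ref{lem fin} and~\ref{lem fixA} give module-finiteness of $\mathcal A$ over $\mathcal A_T$ and over $\mathbb Z[\Lambda_{Z(G)}]\times\mathcal A_{sc}$, not directly that $\mathcal A$ is a finitely generated ring, so the chain \textquotedblleft Gordan $\Rightarrow$ $\mathcal A$ finitely generated $\Rightarrow$ Noether $\Rightarrow$ centre finitely generated with $\mathcal A$ module-finite over it\textquotedblright\ still needs the finite generation of $\mathcal A_T$ (or of $\mathbb Z[\Lambda^+]$, which is where Gordan genuinely applies) spelled out; note also that in characteristic $p$, where $q=0$, one cannot reduce arbitrary $E_\lambda$ to products of dominant and antidominant ones by inverting $q$-powers. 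The cleanest fix is to cite directly the theorem of \cite{Vigcenter} that $\tilde H$ is module-finite over its centre and that this centre is a finitely generated ring, rather than re-deriving it.
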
 

  \bigskip We denote by $\Mod_C(G,\mathfrak B)$  and $\Mod_C(H(G,\mathfrak B))$ the  categories of $ C$-representations of $G$ generated by their $\mathfrak B$-invariant vectors and  of right  $ H_{ C}(G,\mathfrak B)$-modules. 
   The  $\mathfrak B$-invariant functor $\pi \mapsto \pi^{\mathfrak B}:\Mod_C(G,\mathfrak B)\to \Mod_C(H(G,\mathfrak B))$  has  a left adjoint  $\mathfrak T: \tau \mapsto 
\tau \otimes_{ H_{ C}(G,\mathfrak B)}  C[\mathfrak B\backslash G]$. 

\begin{fact}\label{fact1}

 When $c\neq p$, the functor $\pi \mapsto \pi^{\mathfrak B}$ induces a bijection between the isomorphism classes of the irreducible $C$-representations $\pi$ of $G$ with  $\pi^{\mathfrak B} \neq 0$ and of the simple right  $H_C(G,\mathfrak B)$-modules  \cite[I.6.3]{Viglivre}.  
When $C=\mathbb C$, the functors are inverse equivalences of categories {\rm (Bernstein-Borel-Casselman)}.
\end{fact}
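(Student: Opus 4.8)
The plan is to prove the two assertions separately: the first from the soft formalism of idempotents in the smooth Hecke algebra, the second from Bernstein's structure theory.

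\emph{The bijection (arbitrary $C$ with $c\neq p$).} Since the pro-order of $\mathfrak B$ is invertible in $C$, let $e=e_{\mathfrak B}$ be the idempotent of the global smooth Hecke algebra $\HH_C(G)$ given by normalised integration over $\mathfrak B$; then $H_C(G,\mathfrak B)\cong e\HH_C(G)e$ as rings, $C[\mathfrak B\backslash G]=\operatorname{c-Ind}_{\mathfrak B}^G\trivrep$ is the natural $(H_C(G,\mathfrak B),G)$-bimodule (identified with $\HH_C(G)e$), and $\pi\mapsto\pi^{\mathfrak B}=e\pi$ is an exact functor with left adjoint $\mathfrak T$. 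First I would show $\pi^{\mathfrak B}$ is simple when $\pi$ is irreducible and $\pi^{\mathfrak B}\neq0$: a non-zero $H_C(G,\mathfrak B)$-submodule $N\subseteq\pi^{\mathfrak B}$ generates a non-zero, hence full, $G$-subrepresentation $\HH_C(G)N$ of $\pi$, and $e\HH_C(G)N=e\HH_C(G)eN=H_C(G,\mathfrak B)N=N$ forces $N=\pi^{\mathfrak B}$. Next, for a simple right $H_C(G,\mathfrak B)$-module $M$, the unit $M\to\mathfrak T(M)^{\mathfrak B}$ is an isomorphism, by the computation $\mathfrak T(M)^{\mathfrak B}=M\otimes_{H_C(G,\mathfrak B)}\bigl(C[\mathfrak B\backslash G]\bigr)^{\mathfrak B}=M\otimes_{H_C(G,\mathfrak B)}H_C(G,\mathfrak B)=M$, using $e\HH_C(G)e=H_C(G,\mathfrak B)$ and the fact that the $\HH_C(G)$-action on $\mathfrak T(M)$ is through the $C[\mathfrak B\backslash G]$ factor. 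Moreover $\mathfrak T(M)$ is generated by its $\mathfrak B$-invariant vectors, so any proper $G$-subrepresentation $V\subsetneq\mathfrak T(M)$ has $V^{\mathfrak B}\subsetneq M$, hence $V^{\mathfrak B}=0$ by simplicity of $M$; exactness of $e(-)$ then gives $(V_0)^{\mathfrak B}=0$ for the sum $V_0$ of all proper subrepresentations, so $V_0$ is the largest proper subrepresentation and $\pi:=\mathfrak T(M)/V_0$ is the unique irreducible quotient of $\mathfrak T(M)$, with $\pi^{\mathfrak B}\cong M\neq0$. Finally $\pi\mapsto\pi^{\mathfrak B}$ and $M\mapsto\mathfrak T(M)/V_0$ are mutually inverse: for $\pi$ as above the counit $\mathfrak T(\pi^{\mathfrak B})\to\pi$ is non-zero, hence surjective, so $\pi$ is \emph{the} irreducible quotient attached to $M=\pi^{\mathfrak B}$.

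\emph{The equivalence ($C=\mathbb C$).} Here I would invoke the Borel--Bernstein--Casselman theorem that the Iwahori idempotent $e_{\mathfrak B}$ is \emph{special}, i.e.\ $\Mod_{\mathbb C}(G,\mathfrak B)$ is a Serre subcategory of all smooth $\mathbb C$-representations of $G$, equivalently $C[\mathfrak B\backslash G]$ is a projective generator of it; this is the deep input, resting on the Bernstein decomposition and the structure of the centre and of parabolic induction. Granting it, restrict $\pi\mapsto\pi^{\mathfrak B}$ and $\mathfrak T$ to $\Mod_{\mathbb C}(G,\mathfrak B)$ and $\Mod_{\mathbb C}(H(G,\mathfrak B))$: the unit $M\to\mathfrak T(M)^{\mathfrak B}$ is an isomorphism for every $M$ (as above), and the counit $\mathfrak T(\pi^{\mathfrak B})\to\pi$ is surjective with kernel an object of $\Mod_{\mathbb C}(G,\mathfrak B)$ having vanishing $\mathfrak B$-invariants, hence zero since $\Mod_{\mathbb C}(G,\mathfrak B)$ contains no non-zero such object; thus both unit and counit are isomorphisms and the functors are inverse equivalences.

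\emph{Main obstacle.} The crux is the second part: the special-idempotent property of $\mathfrak B$ is genuinely deep and lies well beyond the idempotent formalism used for the bijection, and it is precisely what can fail for a general coefficient field of characteristic $\neq p$---which is why only the bijection on irreducibles is asserted in that generality. A subsidiary point for the first part over an arbitrary field is to check that an irreducible smooth $\pi$ with $\pi^{\mathfrak B}\neq0$ is admissible, so that $\pi^{\mathfrak B}$ is finite-dimensional; with $c\neq p$ this follows from uniform admissibility estimates for representations possessing vectors fixed by a compact open subgroup of invertible pro-order.
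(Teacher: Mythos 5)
The paper states this as a Fact with no proof, citing \cite[I.6.3]{Viglivre} for the first assertion and attributing the second to Bernstein--Borel--Casselman; so there is no ``paper's proof'' to compare against. Your sketch reconstructs, correctly, the standard arguments behind those references: the idempotent formalism (invertibility of the pro-order of $\mathfrak B$, $H_C(G,\mathfrak B)\cong e\HH_C(G)e$, the adjunction $(\mathfrak T, e(-))$, uniqueness of the irreducible quotient of $\mathfrak T(M)$) for the bijection on simples over any $C$ with $c\neq p$, and the special-idempotent/Bernstein-block property of $\mathfrak B$ for the equivalence over $\mathbb C$. You also correctly flag that the second part is the genuinely deep input and cannot be extracted from the idempotent formalism alone. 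One minor point: the subsidiary worry about admissibility in the last paragraph is unnecessary for the bijection on isomorphism classes---simplicity of $\pi^{\mathfrak B}$ and the unique-irreducible-quotient argument go through without knowing $\pi^{\mathfrak B}$ is finite-dimensional---though admissibility is in any case automatic for $c\neq p$ by \cite[II.2.8]{Viglivre}.
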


\medskip Let  $\pi$ be an  irreducible   complex representation  of $G$ with   $\pi^{\mathfrak B} \neq 0$. We recall the   classical properties of    $\pi$, including the Cassleman's criterion of square integrability modulo center, before giving the definition of a discrete  simple  right $ H_\mathbb C(G,\mathfrak B)$-module.

\begin{fact}\label{fact2}
- a)    $\pi$ is isomorphic to a subrepresentation of $\ind_B^G\sigma$ where $\sigma$ is a   $\mathbb C$-character  of $Z$ trivial on $\tilde Z_0$. 

- b)   The representation of $Z$ on the $U$-coinvariants
 $(\ind_B^G\sigma)_U$ is semi-simple, trivial on $\tilde Z_0$ and contains a subrepresentation isomorphic to  $\pi_U$.

- c)  The quotient map $f: \pi\mapsto \delta_B^{-1/2}\pi_U$ induces an $H_{\mathbb Z[q^{-1/2}]} (Z,Z_0)$-equivariant  isomorphism  $\pi^\mathfrak B\to \pi_U^{Z_0}(=\pi_U)$ for the Bernstein  $\mathbb Z[q^{-1/2}]$-algebra embedding  
$$H_{\mathbb Z[q^{-1/2}]} (Z,Z_0) \xrightarrow{t_B} H _{\mathbb Z[q^{-1/2}]} (G, \mathfrak B) \quad t_B(T^Z_\lambda)=\theta_\lambda:=q_\lambda^{-1/2}E(\lambda) \quad {\text for\ } \lambda \in \Lambda, $$
that is, $f (v \theta_\lambda)= f(v) T^Z_{\lambda }$ for $\lambda \in \Lambda , v\in \pi^{\mathfrak B}$  \cite[II.10.1]{VigSelecta}.
{\rm Note that $q_{\lambda}=\delta_B(z)$ where  $\delta_B$ is the modulus  of $B$ and $ z\in Z$ of image $\lambda\in \Lambda$ and  $\theta_{\lambda_1} \theta_{\lambda_2}=\theta_{\lambda_1\lambda_2}$ for $\lambda_1,\lambda_2\in \Lambda$. }

    -  d) Casselman's criterion: 
  $\pi $  is square integrable modulo center    \cite[\S 2.5]{Cas} if and only if its central character is unitary and \begin{align*} |\chi  (\mu(p_F))|& \leq 1  \ \text{for all anti-dominant  }\  \mu \in X_*(T)   \text{ but }  \mu ^{-1}   \text{ not anti-dominant, }
  \end{align*} 
 for any character  $\chi$ of  $Z$ contained in $\delta_B^{-1/2}\pi _U$  \cite[Thm. 6.5.1]{Cas}.
 \end{fact}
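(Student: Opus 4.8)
All four assertions are classical, due in essence to Casselman, Borel and Bernstein, so the plan is to assemble them while tracking normalizations carefully rather than to prove anything \emph{ab initio}.

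\emph{Parts (a) and (b).} The first step is to note that $\pi^{\mathfrak B}\neq 0$ places $\pi$ in the Bernstein component of the unramified principal series: by Borel's theorem $\Mod_{\mathbb C}(G,\mathfrak B)$ is exactly that block, so an irreducible $\pi$ with $\pi^{\mathfrak B}\neq 0$ has non-zero, admissible, hence finite-length Jacquet module $\pi_U$, and choosing a $Z$-stable quotient character of $\pi_U$ and applying Casselman's Frobenius reciprocity $\Hom_G(\pi,\ind_B^G\sigma)\cong\Hom_Z(\pi_U,\delta_B^{1/2}\sigma)$ embeds $\pi\into\ind_B^G\sigma$ for some character $\sigma$ of $Z$; membership in the unramified block forces $\sigma$ trivial on the maximal compact subgroup $\tilde Z_0$ of $Z$, which is (a). For (b) the step is Casselman's computation of the Jacquet module of a minimal principal series through the Bruhat stratification $G=\bigsqcup_{w\in W_0}\mathfrak B w\mathfrak B$: $(\ind_B^G\sigma)_U$ carries a $Z$-stable filtration whose graded pieces are the $W_0$-conjugates of $\delta_B^{1/2}\sigma$, all trivial on $\tilde Z_0$; in the minimal-parabolic case this filtration splits, giving the asserted semisimplicity, and exactness of the Jacquet functor turns the embedding of (a) into $\pi_U\into(\ind_B^G\sigma)_U$.

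\emph{Part (c).} Here I would invoke the Borel--Casselman isomorphism $\pi^{\mathfrak B}\congto\pi_U^{Z_0}$ furnished by the normalized Jacquet projection $f\colon\pi\onto\delta_B^{-1/2}\pi_U$ restricted to $\mathfrak B$-invariants; bijectivity of $f$ on these spaces is Borel's theorem. The only thing left is the equivariance for the Bernstein embedding $t_B(T^Z_\lambda)=\theta_\lambda=q_\lambda^{-1/2}E_\lambda$, i.e.\ $f(v\,\theta_\lambda)=f(v)T^Z_\lambda$ for $\lambda\in\Lambda$; this is verified on generators from the product relations for the Bernstein elements $E_\lambda$ recalled above together with $q_\lambda=\delta_B(z)$, and the clean reference is \cite[II.10.1]{VigSelecta}, which I would cite after recording the normalizations (the $q^{-1/2}$ twist and the $\delta_B^{\pm1/2}$ factors).

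\emph{Part (d) and the main obstacle.} Part (d) is Casselman's square-integrability-modulo-centre criterion \cite[Thm.~6.5.1]{Cas}: identifying, via (c), the exponents of $\pi$ along $B$ with the characters of $Z$ occurring in $\delta_B^{-1/2}\pi_U$, his condition on their valuations becomes $|\chi(\mu(p_F))|\le 1$ for all anti-dominant $\mu\in X_*(T)$ with $\mu^{-1}$ not anti-dominant, unitarity of the central character handling the remaining (central) directions; I would cite it directly. The genuinely delicate point throughout is the normalization bookkeeping — matching Casselman's normalization of $\pi_U$ with the Bernstein element $\theta_\lambda$ so that the constants in (c) come out exactly as stated, and isolating the minimal-parabolic situation in which the semisimplicity in (b) is actually available. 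Everything else is a citation to \cite{Borel}, \cite{Cas} and \cite{VigSelecta}.
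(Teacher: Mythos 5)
Your proposal matches the paper's treatment: Fact \ref{fact2} is stated in the text as a recollection of classical results with embedded citations (to Casselman for the criterion and \cite[II.10.1]{VigSelecta} for the Bernstein-equivariant isomorphism), and you assemble exactly the same ingredients with the same references. The one place you go slightly beyond citation is the assertion that the Bruhat filtration of $(\ind_B^G\sigma)_U$ splits in the minimal-parabolic case; that claim is used but not justified, though the paper treats it identically (stating semisimplicity as a recalled fact without reference), and in any case only the Jordan--H\"older constituents of $\delta_B^{-1/2}\pi_U$ are used downstream.
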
 
  \begin{definition} A  simple  right $H_ {\mathbb C}(G,\mathfrak B)$-module    is called discrete when it is   isomorphic to $\pi^{\mathfrak B} $ for an irreducible admissible square integrable modulo center  $\mathbb C$-representation $\pi$ of $G$.   \end{definition}
 \begin{proposition} A  simple  right $H_ {\mathbb C}(G,\mathfrak B)$-module  $M$  is discrete if and only if   any complex character $\chi$ of $\mathcal A $ contained in  $M$ satisfies: the  restriction of $\chi$  to  $\Lambda_{Z(G)} $ is  unitary  and   
$$ |\chi  (\theta_\mu)| \leq 1  \ \text{for  any dominant  }\  \mu \in X_*(T) \text{ but }  \mu ^{-1}   \text{ not dominant, } \quad  \theta_\mu:=\theta_{\lambda_\mu}
$$
.\end{proposition}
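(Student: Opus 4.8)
The plan is to transport Casselman's square-integrability criterion, Fact~\ref{fact2}(d), through the identifications of Fact~\ref{fact1} and Fact~\ref{fact2}(a)--(c), each step being an equivalence. Since $C=\mathbb C$, the Bernstein--Borel--Casselman equivalence of Fact~\ref{fact1} lets us write $M\cong\pi^{\mathfrak B}$ for an irreducible admissible $\mathbb C$-representation $\pi$ of $G$ with $\pi^{\mathfrak B}\neq 0$, unique up to isomorphism, and by definition $M$ is discrete iff $\pi$ is square integrable modulo centre. By Fact~\ref{fact2}(d) this happens iff the central character $\omega_\pi$ of $\pi$ is unitary and $|\chi_Z(\mu(p_F))|\le 1$ for every character $\chi_Z$ of $Z$ occurring in $\delta_B^{-1/2}\pi_U$ and every anti-dominant $\mu\in X_*(T)$ with $\mu^{-1}$ not anti-dominant. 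It remains to rewrite these two clauses in terms of the $\mathcal A$-characters of $M$.

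For that I set up a dictionary between the characters. By Fact~\ref{fact2}(a),(b) we have $\pi\into\ind_B^G\sigma$ and $\pi_U$ is a subrepresentation of the semisimple $Z$-representation $(\ind_B^G\sigma)_U$, which is trivial on $\tilde Z_0$; since $\delta_B$ is trivial on $\tilde Z_0$ (it is a modulus) and on $Z(G)$ (which centralizes $U$), the normalized Jacquet module $\delta_B^{-1/2}\pi_U$ is a finite direct sum of characters of $\Lambda=Z/Z_0$, so the $\chi_Z$ above are genuine eigencharacters. By Fact~\ref{fact2}(c), $f$ is an $\mathcal A$-equivariant isomorphism $\pi^{\mathfrak B}\congto\pi_U$ intertwining the action of $\theta_\lambda$ on $\pi^{\mathfrak B}$ with that of $T^Z_\lambda$ on $\pi_U$; over $\mathbb C$, where $q$ is invertible and $\theta_\lambda=q_\lambda^{-1/2}E_\lambda$ satisfies $\theta_{\lambda_1}\theta_{\lambda_2}=\theta_{\lambda_1\lambda_2}$, this makes $\mathcal A_{\mathbb C}\cong\mathbb C[\Lambda]$ (via $\theta_\lambda\leftrightarrow\lambda$, matching $H(Z,Z_0)_{\mathbb C}\cong\mathbb C[\Lambda]$) and identifies $M$ with the semisimple $\mathbb C[\Lambda]$-module $\delta_B^{-1/2}\pi_U$. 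Tracking the normalization, this identification sends the operator $\theta_{\lambda_\mu}$ ($\mu\in X_*(T)$) on $M$ to the action of $\mu(p_F)^{-1}$ on $\delta_B^{-1/2}\pi_U$; hence the characters $\chi$ of $\mathcal A$ occurring in $M$ are exactly those with $\chi(\theta_{\lambda_\mu})=\chi_Z(\mu(p_F))^{-1}$ for some character $\chi_Z$ occurring in $\delta_B^{-1/2}\pi_U$. As $\delta_B|_{Z(G)}=1$, every such $\chi_Z$ restricts on $Z(G)$ to $\omega_\pi$, so $\chi|_{\Lambda_{Z(G)}}$ is the character obtained from $\omega_\pi^{-1}$ through $Z(G)\to\Lambda_{Z(G)}=Z(G)Z_0/Z_0$; since $Z(G)\cap Z_0$ is compact, $\chi|_{\Lambda_{Z(G)}}$ is unitary for every $\chi$ occurring in $M$ iff $\omega_\pi$ is unitary.

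Feeding the dictionary into Casselman's criterion finishes the argument. The unitarity of $\omega_\pi$ becomes ``$\chi|_{\Lambda_{Z(G)}}$ unitary for every $\chi$ occurring in $M$''. Using $\chi(\theta_{\lambda_\mu})=\chi_Z(\mu(p_F))^{-1}$, the identity $|c|\le 1\iff |c^{-1}|\ge 1$, and the fact that $\mu\mapsto\mu^{-1}$ exchanges dominant and anti-dominant cocharacters (and correspondingly exchanges the side conditions ``$\mu^{-1}$ not anti-dominant'' and ``$\mu^{-1}$ not dominant''), the bound ``$|\chi_Z(\mu(p_F))|\le 1$ for anti-dominant $\mu$ with $\mu^{-1}$ not anti-dominant'' becomes ``$|\chi(\theta_\mu)|\le 1$ for dominant $\mu$ with $\mu^{-1}$ not dominant'', with $\theta_\mu=\theta_{\lambda_\mu}$. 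These are exactly the conditions in the statement, and since every implication used is an equivalence the proposition follows.

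The only step that is not bookkeeping is the normalization in the dictionary: one must carefully track the $\delta_B^{\pm1/2}$-twist of the Jacquet module and, above all, the inversion coming from the passage to $U$-coinvariants, which is what makes $\theta_{\lambda_\mu}$ correspond to $\mu(p_F)^{-1}$ rather than $\mu(p_F)$ under $f$. That inversion is precisely what converts the ``anti-dominant'' of Casselman's criterion into the ``dominant'' appearing in the proposition; everything else is routine transport of structure through Fact~\ref{fact2}(a)--(c).
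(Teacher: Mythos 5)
Your proof is correct and follows essentially the same approach as the paper's own: both proofs run Casselman's criterion through the dictionary supplied by Fact~\ref{fact2}(c), noting that $T^Z_\lambda$ acts on $\pi_U$ as $z^{-1}$ and that $\mu\mapsto\mu^{-1}$ exchanges the dominant and anti-dominant conditions, so the two inversions combine to preserve the bound $\le 1$ while flipping "anti-dominant" to "dominant". You are more explicit than the paper about the normalization bookkeeping ($\delta_B^{-1/2}$-twist, $Z(G)\cap Z_0$ compact, $\mathcal A_{\mathbb C}\cong\mathbb C[\Lambda]$), but the dictionary $\chi(\theta_{\lambda_\mu})=\chi_Z(\mu(p_F))^{-1}$ is exactly the paper's equivalence "$\chi$ in $\delta_B^{-1/2}\pi_U$ iff $\chi^{-1}$ in $\pi^{\mathfrak B}$".
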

\begin{proof}  
 Let $\chi:Z\to \mathbb C^*$  be a character trivial on $Z_0$. Writing $\chi(z)=\chi(\lambda)$ for $z \in Z$ of image $\lambda \in \Lambda$ and noting $f(v) T^Z_{\lambda }=z^{-1} f(v)$,
we have in c) $$z^{-1} f(v)=\chi(z^{-1}) f(v)\Leftrightarrow f (v \theta(\lambda ))=\chi(\lambda^{-1}) f(v) \Leftrightarrow v \theta(\lambda )=\chi(\lambda^{-1})v .$$
Hence $\chi$ is contained  in $\delta_B^{-1/2}\pi_U $ if and only  $\chi^{-1} :
\mathcal A \to  \mathbb C, \ \chi^{-1}( \theta_\lambda):= \chi^{-1}(\lambda),$ is contained $\pi^\mathfrak B$.
Apply Casselman's criterion (Fact \ref{fact2} d)) (the inverse of an anti-dominant element is dominant).
   \end{proof} 
\begin{remark}\label{rem danger} {\rm  Some authors see $\pi^\mathfrak B$ as a left $H_ {\mathbb C}(G,\mathfrak B)$-module. One exchanges ``left'' and ``right''  by putting $T_w v = vT_{w^{-1}}$ for $w\in \tilde W, v\in \pi^\mathfrak B$.  The left or right  $H_ {\mathbb C}(G,\mathfrak B)$-module  $\pi^\mathfrak B$ is called discrete if $\pi$ is square integrable modulo center. For left modules, the proposition  holds true with anti-dominant instead of   dominant.}
\end{remark}
\begin{lemma} \label{lem chi}For  a  character $\chi:\mathcal A\to \mathbb C$, the  following properties are equivalent:

(i) $\chi|_{\Lambda_{Z(G)}} $ is  unitary  and   
$ |\chi  (\theta_\mu)| \leq 1 $ for  any dominant  $\mu \in X_*(T)$ with $\mu^{-1}$ not dominant. 

(ii) $ |\chi  (\theta_\lambda)| \leq 1$ for any  dominant $ \lambda \in \Lambda$.  

(iii) $ |\chi  (\theta_\lambda)| =1$ for any $\lambda \in \Lambda_{Z(G)}$ and $ |\chi  (\theta_\lambda)| \leq 1$ for any  dominant $ \lambda \in \Lambda_{sc}$.  
\end{lemma}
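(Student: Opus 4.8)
The plan is to prove the three equivalences in Lemma~\ref{lem chi} by a direct analysis of the monoid structure on $\Lambda$ and the multiplicativity of $\lambda \mapsto \theta_\lambda$. The key algebraic input is \eqref{eq E}: since $\theta_\lambda = q_\lambda^{-1/2} E(\lambda)$ and $E_{\lambda_1}E_{\lambda_2} = (q_{\lambda_1}q_{\lambda_2}q_{\lambda_1\lambda_2}^{-1})^{1/2}E_{\lambda_1\lambda_2}$, one checks that $\theta_{\lambda_1}\theta_{\lambda_2} = \theta_{\lambda_1\lambda_2}$ for all $\lambda_1,\lambda_2 \in \Lambda$ (this is recalled in Fact~\ref{fact2}c)), so $\chi \circ \theta$ is a genuine group homomorphism $\Lambda \to \mathbb{C}^*$ and $|\chi(\theta_{\cdot})|\colon \Lambda \to \mathbb{R}_{>0}$ is a homomorphism to the multiplicative group of positive reals. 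Thus a dominant $\lambda$ that is also anti-dominant (i.e.\ invertible in $\Lambda^+$, which by the discussion before Lemma~\ref{lem fix} means $\lambda \in \Lambda_{Z(G)\tilde Z_0}$) automatically satisfies $|\chi(\theta_\lambda)| = 1$, because both $|\chi(\theta_\lambda)| \le 1$ and $|\chi(\theta_{\lambda^{-1})}| = |\chi(\theta_\lambda)|^{-1} \le 1$.

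First I would prove (ii)$\Rightarrow$(i): this is immediate since every dominant $\mu \in X_*(T)$ gives $\lambda_\mu \in \Lambda_T^+ \subset \Lambda^+$, and unitarity on $\Lambda_{Z(G)}$ follows because every $\lambda \in \Lambda_{Z(G)}$ is invertible in $\Lambda^+$ (it lies in $\Lambda_{Z(G)\tilde Z_0}$), so the homomorphism argument above forces $|\chi(\theta_\lambda)| = 1$. Next, (iii)$\Rightarrow$(ii): using Lemma~\ref{lem fin}, the submonoid $\Lambda_{Z(G)} \times \Lambda_{sc}^+$ has finite index in $\Lambda^+$, so any dominant $\lambda \in \Lambda$ has a power $\lambda^m$ lying in $\Lambda_{Z(G)} \times \Lambda_{sc}^+$; writing $\lambda^m = \lambda' \lambda''$ with $\lambda' \in \Lambda_{Z(G)}$ and $\lambda'' \in \Lambda_{sc}^+$, multiplicativity gives $|\chi(\theta_\lambda)|^m = |\chi(\theta_{\lambda'})|\,|\chi(\theta_{\lambda''})| \le 1$ by (iii), hence $|\chi(\theta_\lambda)| \le 1$.

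The remaining and most delicate implication is (i)$\Rightarrow$(iii). The statement $|\chi(\theta_\lambda)| = 1$ for $\lambda \in \Lambda_{Z(G)}$ is part of the hypothesis in (i) only on the subgroup $\Lambda_{Z(G)\tilde Z_0} \cap \Lambda_{Z(G)}$... actually (i) gives unitarity on all of $\Lambda_{Z(G)}$ directly, so that half is free. For the second half, given a dominant $\lambda \in \Lambda_{sc}$, I want to deduce $|\chi(\theta_\lambda)| \le 1$ from the bound on dominant cocharacters $\mu \in X_*(T)$ with $\mu^{-1}$ not dominant, i.e.\ from the bound on $\lambda_\mu \in \Lambda_T^+ \setminus (\text{invertibles})$. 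The point is that $\Lambda_T \cap \Lambda_{sc} = \Lambda_{sc} \cap \Lambda_T$ has finite index in $\Lambda_{sc}$ (since $\Lambda_T$ has finite index in $\Lambda$ by Lemma~\ref{lem fin}, as $\tilde Z_0/Z_0$ is finite), and for $\lambda$ dominant in $\Lambda_{sc}$ but not the trivial element --- the only invertible of $\Lambda_{sc}^+$ --- a suitable power $\lambda^m$ lies in $\Lambda_T^+$ and is still non-invertible there, i.e.\ equals $\lambda_\mu$ for some dominant $\mu$ with $\mu^{-1}$ not dominant; then $|\chi(\theta_\lambda)|^m = |\chi(\theta_{\lambda^m})| = |\chi(\theta_\mu)| \le 1$ by (i). The main obstacle is the careful bookkeeping showing that a power of a non-invertible dominant element of $\Lambda_{sc}$ lands in $\Lambda_T^+$ while remaining non-invertible (so that one genuinely lands in the range where hypothesis (i) applies, rather than in the ``trivial'' locus where no bound is assumed), together with handling arbitrary dominant $\lambda \in \Lambda_{sc}$ by decomposing against the invertible subgroup --- here $\Lambda_{sc}^+$ has only the trivial invertible element, which simplifies matters, but one must still argue the power lands in the right submonoid using Gordan's lemma as in Lemma~\ref{lem fin}.
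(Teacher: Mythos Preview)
Your proof is correct and takes essentially the same approach as the paper: multiplicativity of $\lambda \mapsto |\chi(\theta_\lambda)|$, passage to a power so as to land in the submonoid where the hypothesis applies (using Lemma~\ref{lem fin} and the fact, recalled before Lemma~\ref{lem fix}, that $\lambda^n \in \Lambda_T$ for $n$ large), together with the observation that invertibles in the dominant monoid force absolute value $1$. The paper proves (i)$\Leftrightarrow$(ii) directly and then says (i)$\Leftrightarrow$(iii) is similar, whereas you close the cycle (ii)$\Rightarrow$(i)$\Rightarrow$(iii)$\Rightarrow$(ii); your ``main obstacle'' in (i)$\Rightarrow$(iii) is in fact harmless either way, since the invertible elements of $X_*(T)^+$ map into $\Lambda_{Z(G)}$, where (i) already provides unitarity, so no separate non-invertibility check is needed (and Gordan's lemma plays no role there).
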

\begin{proof}   
 (ii) implies (i) because $ |\chi  (\theta_\lambda)| \leq 1$ for any   $ \lambda \in \Lambda^+$,  implies   $ |\chi  (\theta_\lambda)| = 1$ when $\lambda$ is invertible in $\Lambda^+$, i.e. 
$\lambda \in \Lambda_{Z(G)\tilde Z_0}$.  Conversely, (i) implies that for $\lambda \in \Lambda^+$ and $n\in \mathbb N$ large with $\lambda ^n\in \Lambda_T^+$   we have 
  $|\chi(\theta_\lambda^n)|\leq 1$. This implies $|\chi(\theta_\lambda)|\leq 1$ hence  (ii).
  The arguments of the equivalence between  (i) and  (iii) are similar using Lemma \ref{lem fin}.
  \end{proof}

\begin{proposition} \label{prop dis}  A  simple  right $H_ {\mathbb C}(G,\mathfrak B)$-module $M$  is discrete if and only if $\Lambda_{Z(G) }$ acts on $M$ by a  unitary character and 
the simple components of $M$ restricted  to $H_ {\mathbb C}(G^{sc},\mathfrak B^{sc})$ are discrete.
 \end{proposition}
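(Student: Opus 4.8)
I would prove Proposition~\ref{prop dis} by running everything through the character criterion given by the previous proposition together with Lemma~\ref{lem chi}(iii): the simple module $M$ is discrete if and only if every $\mathbb C$-character $\chi$ of $\mathcal A$ occurring in $M$ satisfies $|\chi(\theta_\lambda)|=1$ for all $\lambda\in\Lambda_{Z(G)}$ and $|\chi(\theta_\lambda)|\le 1$ for all $\lambda\in\Lambda_{sc}^{+}$. The plan is then to identify the first of these conditions with ``$\Lambda_{Z(G)}$ acts on $M$ by a unitary character'' and the second with ``every simple component of $M|_{H_{\mathbb C}(G^{sc},\mathfrak B^{sc})}$ is discrete'' --- the factorization of the criterion into these two pieces being precisely what Lemma~\ref{lem chi} provides (which in turn uses that $\Lambda_{Z(G)}\times\Lambda_{sc}$ has finite index in $\Lambda$, Lemma~\ref{lem fin}).

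\emph{The central part.} By Lemma~\ref{lem fix}, $\Lambda_{Z(G)}\subseteq\Lambda_{Z(G)\tilde Z_0}\subseteq\Omega$, so for $\lambda\in\Lambda_{Z(G)}$ one has $\ell(\lambda)=0$, hence $q_\lambda=1$, the element $T_\lambda$ is invertible in $\tilde H_{\mathbb C}\cong H_{\mathbb C}(G,\mathfrak B)$, and $\theta_\lambda=E_\lambda=T_\lambda$. Such a $\lambda$ fixes the alcove, hence acts trivially on the decorated Dynkin diagram, so $T_\lambda$ commutes with every $T_s$ ($s\in S$); and it commutes with every $T_u$ ($u\in\Omega$) since $\Omega$ is commutative. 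Therefore $T_\lambda$ is central in $\tilde H_{\mathbb C}$, so on the simple finite-dimensional module $M$ (Fact~\ref{fact}) it acts by a scalar $\omega_M(\lambda)$; the resulting character $\omega_M\colon\Lambda_{Z(G)}\to\mathbb C^{*}$ satisfies $\chi(\theta_\lambda)=\omega_M(\lambda)$ for every $\mathcal A$-character $\chi$ occurring in $M$. Thus ``$\Lambda_{Z(G)}$ acts on $M$ by a unitary character'' is exactly ``$|\chi(\theta_\lambda)|=1$ for all $\lambda\in\Lambda_{Z(G)}$'', for one and hence every such $\chi$.

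\emph{The semisimple part.} The inclusion $H_{\mathbb C}(G^{sc},\mathfrak B^{sc})\hookrightarrow H_{\mathbb C}(G,\mathfrak B)$ is the evident $H_{\mathbb C}(W,S,q_s)\hookrightarrow\tilde H_{\mathbb C}$; because Bernstein elements and the scalars $q_\lambda$ are computed by the same formulas for indices in $W$, and the dominance relation on $\Lambda_{sc}=\Lambda\cap W$ is the same on either side, this inclusion carries $\mathcal A^{sc}$ and the elements $\theta^{sc}_\lambda$ ($\lambda\in\Lambda_{sc}$) onto the subring $\mathcal A^{sc}\subset\tilde H_{\mathbb C}$ and onto the $\theta_\lambda$. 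Applying the previous proposition and Lemma~\ref{lem chi}(iii) to $G^{sc}$ --- where the condition on $\Lambda_{Z(G^{sc})}$ is vacuous, since $G^{sc}$ is simply connected and so $\Lambda_{Z(G^{sc})}=1$ --- a simple $H_{\mathbb C}(G^{sc},\mathfrak B^{sc})$-module is discrete if and only if every $\mathcal A^{sc}$-character $\chi^{sc}$ occurring in it satisfies $|\chi^{sc}(\theta^{sc}_\lambda)|\le 1$ for all $\lambda\in\Lambda_{sc}^{+}$. By Fact~\ref{fact}, $M|_{H_{\mathbb C}(G^{sc},\mathfrak B^{sc})}=\bigoplus_i M_i$ is semisimple, and since $\mathcal A$ is commutative and $M$ is finite-dimensional, the $\mathcal A$-characters of $M$ and the $\mathcal A^{sc}$-characters of the $M_i$ are linked in both directions: on one hand, projecting an $\mathcal A$-eigenvector $v=\sum_i v_i$ of $M$ shows that each nonzero $v_i$ is an $\mathcal A^{sc}$-eigenvector in $M_i$ with character $\chi|_{\mathcal A^{sc}}$; on the other, the nonzero $\mathcal A$-stable subspace of $M$ on which $\mathcal A^{sc}$ acts through a given $\mathcal A^{sc}$-character $\chi^{sc}$ of some $M_i$ contains an $\mathcal A$-eigenvector restricting to $\chi^{sc}$. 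Consequently, if $M$ is discrete then every $\chi^{sc}$ occurring in any $M_i$ lifts to a $\chi$ in $M$ with $|\chi(\theta_\lambda)|\le 1$ for $\lambda\in\Lambda_{sc}^{+}$, so each $M_i$ is discrete; and if every $M_i$ is discrete then every $\chi$ in $M$ restricts to some $\chi^{sc}$ in some $M_i$, so $|\chi(\theta_\lambda)|=|\chi^{sc}(\theta^{sc}_\lambda)|\le 1$ for $\lambda\in\Lambda_{sc}^{+}$. Combined with the central part, this is precisely the criterion of Lemma~\ref{lem chi}(iii) for $M$, which proves Proposition~\ref{prop dis}.

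\emph{Main obstacle.} The only point that really needs care is the compatibility invoked at the start of the previous paragraph: that under $H_{\mathbb C}(G^{sc},\mathfrak B^{sc})\hookrightarrow H_{\mathbb C}(G,\mathfrak B)$ the Bernstein elements $\theta^{sc}_\lambda$ are sent to $\theta_\lambda$ and the two dominance orders on $\Lambda_{sc}$ coincide. This is where the explicit description of the Iwahori Hecke ring and of the map $i_{sc}$ must be used. Everything else is the factorized character criterion of Lemma~\ref{lem chi} plus elementary linear algebra over $\mathbb C$: the semisimplicity of $M|_{H_{\mathbb C}(G^{sc},\mathfrak B^{sc})}$ and the simultaneous diagonalization of the commutative ring $\mathcal A$.
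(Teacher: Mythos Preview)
Your proof is correct and follows the same route as the paper: the paper's entire proof is the one-line citation ``Lemma~\ref{lem chi}(iii)'', and you have spelled out in detail how that lemma, together with the preceding proposition (Casselman's criterion rewritten for characters of $\mathcal A$), yields the statement. The extra work you do --- checking that $\Lambda_{Z(G)}$ acts by a central character on $M$, that $\Lambda_{Z(G^{sc})}=1$, that $\theta^{sc}_\lambda\mapsto\theta_\lambda$ under $H_{\mathbb C}(G^{sc},\mathfrak B^{sc})\hookrightarrow H_{\mathbb C}(G,\mathfrak B)$, and that the $\mathcal A$-characters of $M$ and the $\mathcal A^{sc}$-characters of its simple constituents correspond --- is exactly what the paper leaves implicit.
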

\begin{proof}  Lemma \ref{lem chi} (iii).
\end{proof}

\section{Characters}\label{S 4} In this section,  $C$ is a field of characteristic $c$ and $\bf G$ is absolutely  simple. We give  the characters  $H \to C$  which extend to $\tilde H $. This is an exercice, already  in the litterature  when $C=\mathbb C$ is the complex field  \cite{Borel}.  

 For distinct $s,t\in S$, the order $n_{s,t}$ of $st$ is finite except if the type is $A_1$. In the finite case,
\begin{align}\label{eq TsTt}(T_sT_t)^r=(T_tT_s)^r \  \text{if } n_{s,t}=2r, \quad 
(T_tT_s)^rT_t=T_t(T_sT_t)^r \    \text{if  } n_{s,t}=2r+1. 
\end{align}
The  $T_s$ for $s\in S$ and the relations  \eqref{eq Tsq}, \eqref{eq TsTt}  give a  presentation of $H $. A presentation of  $\tilde H $ is given by
the  $T_u, T_s$ for $u\in \Omega, s\in S$  and the  relations \eqref{eq Tsq}, \eqref{eq TsTt} and
\begin{align} \label{eq TsTu}
 T_uT_{u'}=T_{uu'}, T_uT_s=T_{u(s)}T_u   \ \text{for}  \ u\in \Omega, s\in S.
 \end{align}
 We have a disjoint decomposition $S=\sqcup _{i=1}^m S_i$ where $S_i$ is the intersection of $S$ with a conjugation class  of $W$ and \cite[3.3]{Borel}: $$\begin{matrix} m&=& 1 & \text{\ type of \ } \Sigma= & A_\ell (\ell \geq 2), D_{\ell} (\ell \geq 2), E_6, E_7, E_8\\
         && 2  &  &A_1, B_{\ell} (\ell \geq 3), F_4, G_2\\
 &&  3 & & C_\ell  (\ell \geq 2)
          \end{matrix}
$$
When $m>1$, we fix a numerotation such that $|S_1|>  |S_2|$ when $m=2$ (except for $A_1$ where $S_1=\{s_1\}, S_1=\{s_2\}$)  and $|S_1|>  |S_2|=|S_3|$ when $m=3$. We have 
$S_3=\{s_3\}$ for $C_\ell$, $S_2=\{s_2\}$  for $A_1, B_\ell, C_\ell, G_2 $ and $S_2$ has  two elements for $F_4$, and $S_1 $  has respectively $\ell, \ell -1, 3, 2$ elements for $B_\ell, C_\ell, F_4, G_2$. The parameters $d_s$ are equal on $S_i$ to a positive  integer   $d_i$.  The automorphism group  $\Aut(W,S)$  of the completed Dynkin diagram $Dyn$ is \cite[1.8]{IwaMats}: 
 $$\begin{matrix}       \Aut(W,S) &=&\mathbb Z/(\ell+1) \mathbb Z  &\text{\ type of \ }  \Sigma=&  A_\ell (\ell \geq 1)\\
  & &\mathbb Z/ 2\mathbb Z&     &  B_{\ell} (\ell \geq 3), C_\ell  (\ell \geq 2), E_7 \\
                  &&(\mathbb Z/ 2  \mathbb Z) \times (\mathbb Z/ 2  \mathbb Z) &  & D_{2\ell} (2\ell \geq 2)\\
                  & &\mathbb Z/ 4\mathbb Z&  &  D_{2\ell+1} (2\ell+1 \geq 3)\\
 & &\mathbb Z/ 3\mathbb Z&   &    E_6\\ 
   & &\{1\} &   & E_8, F_4, G_2
          \end{matrix}
          $$

\begin{lemma}\label{lem aut} The  automorphism  group $\Aut(W,S,d_i) $  of the decorated Dynkin diagram   $Dyn$ is equal to $\Aut(W,S)$ with two  exceptions:   type  $A_1$ and different parameters $d_1\neq d_2$,  type  $C_\ell (\ell \geq 2) $ and different parameters $d_2\neq d_3$, where  $\Aut(W,S,d_i)=\{1\} , \Aut(W,S)=\mathbb Z/ 2\mathbb Z$.  
 \end{lemma}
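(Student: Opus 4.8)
The plan is to compute $\Aut(W,S,d_i)$ directly, case by case, by comparing it with $\Aut(W,S)$, which has just been tabulated. The key observation is that $\Aut(W,S,d_i)$ is the subgroup of $\Aut(W,S)$ consisting of those diagram automorphisms that preserve the decoration, i.e.\ permute the vertices of $Dyn$ in a way compatible with the partition $S=\sqcup_i S_i$ and the assignment $s\mapsto d_i$ for $s\in S_i$. So I would first recall that each $S_i$ is a single $W$-conjugacy class intersected with $S$, hence is automatically permuted among itself by any element of $\Aut(W,S)$ (conjugacy classes are intrinsic); therefore, \emph{if the $d_i$ on distinct pieces $S_i$ happen to coincide}, every element of $\Aut(W,S)$ preserves the decoration and $\Aut(W,S,d_i)=\Aut(W,S)$. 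This immediately settles the case $m=1$ (types $A_\ell$, $D_\ell$, $E_6$, $E_7$, $E_8$), where there is only one piece and nothing to preserve, and reduces the remaining analysis to the multi-parameter types $A_1$, $B_\ell$, $C_\ell$, $F_4$, $G_2$ with genuinely distinct parameters.

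Next I would go through the types with $m\geq 2$. For $F_4$ and $G_2$ we have $\Aut(W,S)=\{1\}$ already, so $\Aut(W,S,d_i)=\{1\}$ with no work. For $B_\ell$ ($\ell\geq 3$) the group $\Aut(W,S)=\mathbb Z/2\mathbb Z$ is generated by the flip of the completed diagram; I would check that this flip sends the affine node and its neighbour (which together form the short-branch fork) to themselves and the long end-node to itself, so it fixes the two-element partition $\{S_1,S_2\}$ setwise \emph{regardless} of the values $d_1,d_2$ — hence $\Aut(W,S,d_i)=\mathbb Z/2\mathbb Z=\Aut(W,S)$ even when $d_1\neq d_2$. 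The only types where a nontrivial diagram automorphism genuinely moves vertices between pieces of the partition are $A_1$ and $C_\ell$: for $\widetilde A_1$ the flip swaps the two nodes $s_1,s_2$, so it lies in $\Aut(W,S,d_i)$ iff $d_1=d_2$; for $\widetilde C_\ell$ the flip swaps the two end-nodes $s_2$ (labelled as $S_2=\{s_2\}$, one end) and $s_3$ ($S_3=\{s_3\}$, the other end) while fixing the internal chain $S_1$, so it lies in $\Aut(W,S,d_i)$ iff $d_2=d_3$. In both cases, when the relevant parameters differ the only surviving automorphism is the identity, giving $\Aut(W,S,d_i)=\{1\}$ against $\Aut(W,S)=\mathbb Z/2\mathbb Z$, exactly the two asserted exceptions.

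I do not expect a serious obstacle here; the content is entirely bookkeeping with the completed Dynkin diagrams. The one point that needs care is being explicit about \emph{which} end-nodes of the $\widetilde C_\ell$ diagram carry which parameter and verifying that the unique nontrivial diagram flip is precisely the one interchanging the two extreme nodes while fixing the interior — this is where an incorrect labelling convention would produce a wrong answer. I would pin this down by drawing $\widetilde C_\ell = \circ{=}{=}\!\circ-\cdots-\circ{=}{=}\!\circ$ and noting that $S_1$ is the interior chain of $\ell-1$ nodes, $S_2$ and $S_3$ the two distinct extremities, matching the cardinalities recorded just before the lemma ($|S_1|=\ell-1$, $S_2=\{s_2\}$, $S_3=\{s_3\}$); the flip is then visibly the reflection of the diagram, which swaps $s_2$ and $s_3$. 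Similarly for $\widetilde A_1$ one simply has two nodes joined by a (multiply-bonded) edge, flipped by the nontrivial element of $\mathbb Z/2\mathbb Z$. Assembling these observations yields the table in the statement.
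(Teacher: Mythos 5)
Your proposal is correct and follows essentially the same approach as the paper: a case-by-case check of which automorphisms of the completed Dynkin diagram preserve the partition $S=\sqcup_i S_i$, with $A_1$ (equal parameters exchanged by the flip) and $C_\ell$ (the flip exchanging the two extreme nodes $s_2,s_3$) as the only types where the nontrivial element of $\Aut(W,S)$ moves a vertex between pieces. One caveat: your opening remark that each $S_i$ ``is automatically permuted among itself by any element of $\Aut(W,S)$ (conjugacy classes are intrinsic)'' is not correct as stated -- a diagram automorphism is an outer automorphism of $W$ and can certainly swap $W$-conjugacy classes, as it does precisely in the $A_1$ and $C_\ell$ cases -- but your own case analysis in the next paragraph overrides this and gives the right answer, so the argument stands.
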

\begin{proof}
 When $m=2$, $S_1$ and $S_2$ are $  \Aut(W,S)$-stable except for 
  $A_1$ where the non-trivial element of $  \Aut(W,S)$ permutes $s_1,s_2$;  $\Aut(W,S,d_i) $  fixes $s_1$ and $s_2$ if $d_1\neq d_2$.
 
   When $m=3$, the type is $C_\ell (\ell \geq 2) $,  the non-trivial element of $  \Aut(W,S)$ permutes $s_2,s_3$;   $  \Aut(W,S, d_i)$  fixes $s_2$ and $s_3$ if $d_2\neq d_3$.
   \end{proof}

\begin{lemma} \label{lem Bor}
  A map $(T_s)_{s\in S}\to C$  is the restriction of a character  $\chi: H \to C$ if and only if:

when  $c\neq p$, it is constant and equal to $\chi_i=-1$ or $q^{d_i}$    on each $S_i$. There are $2^m$  characters if $q+1\neq 0$ in $C$.

when  $c=p$,  its   values  are $-1$ or $0$. There are $2^{|S|}$ characters.
\end{lemma}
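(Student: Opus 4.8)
The plan is to determine the characters of the affine Hecke ring $H = H(W,S,q_s)$ directly from the presentation by generators $(T_s)_{s\in S}$ and the relations \eqref{eq Tsq}, \eqref{eq TsTt}. A character $\chi\colon H\to C$ is determined by the scalars $\chi(T_s)$, and these scalars must satisfy exactly the relations obtained by applying $\chi$ to \eqref{eq Tsq} and \eqref{eq TsTt}; conversely any assignment satisfying those scalar relations extends uniquely to an algebra homomorphism, since $H$ is presented by these generators and relations. So the whole statement reduces to a computation with commuting scalars.

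First I would analyze the quadratic relation. Applying $\chi$ to \eqref{eq Tsq} gives $(\chi(T_s)-q_s)(\chi(T_s)+1)=0$ in $C$, so $\chi(T_s)\in\{-1,\,q_s\}=\{-1,\,q^{d(s)}\}$ for every $s$. When $c=p$ we have $q=0$ in $C$, hence $q^{d(s)}=0$, so the possible values are $-1$ and $0$; when $c\neq p$ the two values $-1$ and $q^{d(s)}$ are as stated (and they are distinct precisely when $q^{d(s)}+1\neq 0$, though one must be a little careful since distinctness is per-$d_i$, not global — but the hypothesis $q+1\neq 0$ forces all $q^{d_i}\neq -1$).

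Next I would analyze the braid-type relation \eqref{eq TsTt} for $s\neq t$ in the same $S_i$ versus in different $S_i$. Since all $\chi(T_s)$ are scalars, they commute, so for $n_{s,t}=2r$ the relation $(T_sT_t)^r=(T_tT_s)^r$ becomes $(\chi(T_s)\chi(T_t))^r=(\chi(T_t)\chi(T_s))^r$, which is automatic. For $n_{s,t}=2r+1$ (the odd case) the relation $(T_tT_s)^rT_t=T_t(T_sT_t)^r$ likewise becomes an identity of scalars, automatically true. Hence \emph{every} assignment of values in $\{-1,q_s\}$ to the generators yields a character, provided — this is the one real constraint — it is constant on each $S_i$. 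The reason constancy on $S_i$ is forced: if $s,t$ lie in the same conjugacy class of $W$, they are linked by a chain of odd bonds $n_{s,t}=3$, and in a rank-two dihedral subsystem of type $A_2$ (odd bond) one checks from the relation $T_tT_sT_t=T_sT_tT_s$ that a character must take equal values on $s$ and $t$ unless the common value $-1$ or $q$ coincides — more precisely, expanding $T_tT_sT_t=T_sT_tT_s$ with $T_s^2=(q_s-1)T_s+q_s$ and the analogue for $t$ shows $\chi(T_s)=\chi(T_t)$ whenever $q_s=q_t$ (which holds within an $S_i$), \emph{except} that when $c=p$ both admissible values could differ; so I must check that the odd-bond relation genuinely forces equality in characteristic $p$ as well. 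This is the step I expect to be the main obstacle: pinning down exactly why a character is forced to be constant on each $S_i$ and \emph{only} on each $S_i$ — i.e. that distinct $S_i$'s impose no coupling (handled by the even-bond or no-bond cases above, which are automatic) while each $S_i$ is rigid.

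Finally I would count. With $m$ classes $S_1,\dots,S_m$ and two choices per class we get $2^m$ characters when $c\neq p$, and these are genuinely distinct when $q^{d_i}\neq -1$ for all $i$, which is guaranteed by $q+1\neq 0$ in $C$ (since $q^{d_i}=-1$ would force, raising to suitable powers, $q$ to be a root of unity with $q+1=0$ in the relevant cases — I would spell this out using that the $d_i$ are positive integers; the cleanest route is simply to note that if $q+1\ne 0$ then $-1\ne q\ne q^2\ne\cdots$ is not automatic, so I instead argue that two assignments give the same character iff they agree on each $S_i$, and $\{-1,q^{d_i}\}$ has two elements iff $q^{d_i}\ne -1$). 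When $c=p$, the value on each generator is independently $-1$ or $0$ — here there is no constancy constraint coming from the odd bonds once one value is $0$? No: I must recheck, because if one of $\chi(T_s),\chi(T_t)$ is $0$ and the other is $-1$ on an odd bond, the scalar relation $(T_tT_s)^rT_t=T_t(T_sT_t)^r$ is still automatically satisfied (both sides are scalars and scalars commute), so in characteristic $p$ there is indeed \emph{no} constraint linking generators at all, giving $2^{|S|}$ characters. Thus the characteristic-$p$ case is actually easier, and the subtlety is entirely in the $c\ne p$ case where the quadratic relation's two roots $-1,q^{d_i}$ are distinct and the odd bonds force constancy on each $S_i$. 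I would present the proof by: (1) reduce to scalar relations via the presentation; (2) solve the quadratic relation; (3) observe the braid relations are automatic for scalars hence impose nothing beyond what the structure of $S_i$ forces; (4) show constancy on $S_i$ is forced when $c\ne p$ by examining an odd bond; (5) count.
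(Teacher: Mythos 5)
Your plan --- determine the characters directly from the presentation via the quadratic and braid relations --- is the right approach and is essentially what the paper does (the paper invokes the presentation and, for $c\neq p$, the invertibility of the $T_w$; for $c=p$ it cites \cite[Prop.2.2]{Vigss}). But your execution contains a contradictory step. You first claim that the odd braid relation ``becomes an identity of scalars, automatically true,'' and then in the very next sentence announce ``the one real constraint'' is constancy on $S_i$. These cannot both hold. The source of the confusion is that the odd braid relation as written in \eqref{eq TsTt}, namely $(T_tT_s)^rT_t=T_t(T_sT_t)^r$, is literally the same word on both sides (expand it: each side is the alternating word $T_tT_sT_t\cdots T_t$ of length $2r+1$), so it is vacuous --- a typo in the paper. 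The intended relation is $(T_sT_t)^rT_s=(T_tT_s)^rT_t$, with both sides of length $2r+1$ but with $T_s$ appearing $r+1$ times on the left and $r$ times on the right. Applying a character gives $\chi(T_s)^{r}\chi(T_t)^{r}\bigl(\chi(T_s)-\chi(T_t)\bigr)=0$, which is \emph{not} automatic: it forces $\chi(T_s)=\chi(T_t)$ as soon as $\chi(T_s)\chi(T_t)\neq 0$. That is exactly what happens when $c\neq p$ (both candidate values $-1$ and $q^{d_i}$ are nonzero), and it is the content of the paper's remark ``the $T_w$ are invertible.'' When $c=p$, $q^{d_i}=0$, so whenever the two values differ at least one of them is zero and the constraint is satisfied regardless, giving $2^{|S|}$ characters.

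You do eventually reach for the correct $r=1$ relation $T_tT_sT_t=T_sT_tT_s$ and extract the right constraint from it, but you never reconcile this with your earlier claim that the odd relation is automatic, and in the final count you revert to the vacuous form to argue there is ``no constraint at all'' in characteristic $p$. To make the argument airtight, replace steps (3)--(4) of your outline with: the even braid relations impose nothing on scalars; the odd braid relation $(T_sT_t)^rT_s=(T_tT_s)^rT_t$ forces $\chi(T_s)=\chi(T_t)$ whenever $\chi(T_s)\chi(T_t)\neq 0$; since two simple reflections lie in the same $W$-conjugacy class $S_i$ if and only if they are linked by a chain of simple reflections with pairwise odd bond order, constancy on $S_i$ follows when $c\neq p$; when $c=p$ the constraint dissolves because one of the two admissible values is $0$. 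Your worry about whether $q+1\neq 0$ in $C$ really forces $q^{d_i}\neq -1$ for all $i$ is legitimate (it is not automatic for $d_i>1$ over a field of positive characteristic $\neq p$) and worth flagging, but it is orthogonal to the main gap above.
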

\begin{proof} This follows from the presentation  of $ H $. When  $c\neq p$, the  $T_w$ are  invertible.      When $c=p$ \cite[Prop.2.2]{Vigss}.
   \end{proof}
  The unique character $\chi:H \to C$ with $ \chi (T_s)=q_s$ (resp. $ \chi (T_s)=-1$) for all $ s\in S$  is   called the trivial  (resp. special) $C$-character. If they are equal then $c\neq p, 0$.

        A  character $\chi:H \to C $ extends to a  character  of $\tilde H $ if and only if $\chi (T_s)=\chi (T_{u(s)})$ for all $s\in S$ and $u\in  \Omega$.  When the image  $\Psi$ of $\Omega$ in $\Aut(W,S,d_i)$ is trivial or when $m=1$, any character of $H$ extends to $\tilde H$. The extensions are not unique in general.
The  trivial and special  characters extend, their extensions  are also called   trivial and special.

\begin{lemma} \label{lem ext}Assume $c\neq p$.
A   character  $\chi= (\chi_i)_{1\leq i \leq m}: H \to C $   extends to a character of  $\tilde H $ except for

- type $A_1$, equal parameters $d_1=d_2$, $\Psi   \simeq \mathbb Z/ 2 \mathbb Z$,   when $\chi_1\neq \chi_2$,

- type  
$C_\ell (\ell \geq 2) $, equal parameters $d_2=d_3$,   $\Psi   \simeq \mathbb Z/ 2 \mathbb Z$,  when $\chi_2\neq \chi_3$.

  \end{lemma}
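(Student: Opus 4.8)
The plan is to prove Lemma \ref{lem ext} by a direct, case-by-case analysis driven entirely by the extension criterion established just above: a character $\chi\colon H\to C$ extends to $\tilde H$ if and only if $\chi(T_s)=\chi(T_{u(s)})$ for all $s\in S$ and $u\in\Omega$, equivalently if and only if $\chi$ is constant on each $\Psi$-orbit in $S$, where $\Psi$ is the image of $\Omega$ in $\Aut(W,S,d_i)$. Since by Lemma \ref{lem Bor} (with $c\neq p$) a character is determined by its constant values $\chi_i\in\{-1,q^{d_i}\}$ on each $S_i$, and since $\Psi$ preserves the decoration $(d_i)$, the action of $\Psi$ can only permute the sets $S_i$ among themselves; the character extends unless $\Psi$ fuses two of the $S_i$ on which $\chi$ takes different values.

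First I would dispose of the case $m=1$ (types $A_\ell,\ell\geq2$, $D_\ell,\ell\geq2$, $E_6,E_7,E_8$): here $S=S_1$ is a single conjugacy class, so $\chi$ is automatically constant on $S$, and since $\Psi$ permutes $S$ this constant value is $\Psi$-invariant, so every character extends. Next, for $m=2$ or $m=3$ I would invoke the analysis in the proof of Lemma \ref{lem aut}: for $B_\ell\,(\ell\geq3), F_4, G_2$, and for $C_\ell$ with $d_2\neq d_3$, and for $A_1$ with $d_1\neq d_2$, the group $\Aut(W,S,d_i)$ already fixes each $S_i$ pointwise (in the exceptional cases $A_1,d_1\neq d_2$ and $C_\ell,d_2\neq d_3$ because $\Aut(W,S,d_i)=\{1\}$ by Lemma \ref{lem aut}; in the others because $\Aut(W,S)$ stabilizes each $S_i$), hence $\Psi$ does too, and every character extends. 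This leaves exactly the two listed exceptional situations — type $A_1$ with $d_1=d_2$, and type $C_\ell\,(\ell\geq2)$ with $d_2=d_3$ — where $\Aut(W,S,d_i)\simeq\mathbb Z/2\mathbb Z$ swaps $S_1\leftrightarrow S_2$ (resp.\ $S_2\leftrightarrow S_3$).

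In those two exceptional situations I would argue both directions. If $\Psi$ is trivial, every character of $H$ extends (as noted in the text), so no exception arises; hence the exceptions occur precisely when $\Psi\simeq\mathbb Z/2\mathbb Z$, i.e.\ the full $\Aut(W,S,d_i)$. Assuming $\Psi\simeq\mathbb Z/2\mathbb Z$: since the nontrivial element $u\in\Psi$ swaps the two relevant sets (say $S_1$ and $S_2$ for $A_1$), the extension condition $\chi(T_s)=\chi(T_{u(s)})$ forces $\chi_1=\chi_2$. Conversely, if $\chi_1=\chi_2$ then $\chi$ is constant on $S_1\cup S_2=S$ and thus $\Psi$-invariant, so it extends. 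This yields exactly the stated exceptions: $\chi$ fails to extend iff ($A_1$, $d_1=d_2$, $\Psi\simeq\mathbb Z/2\mathbb Z$, $\chi_1\neq\chi_2$) or ($C_\ell$, $d_2=d_3$, $\Psi\simeq\mathbb Z/2\mathbb Z$, $\chi_2\neq\chi_3$).

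The only mildly delicate point — the main obstacle, such as it is — is bookkeeping: one must be careful that in the $C_\ell$ case the nontrivial diagram automorphism (when $d_2=d_3$) really swaps $S_2$ and $S_3$ and fixes $S_1$ pointwise, so that the obstruction involves $\chi_2$ versus $\chi_3$ and not $\chi_1$; this is exactly the computation recalled in the proof of Lemma \ref{lem aut}, so I would simply cite it. Everything else is an immediate consequence of the extension criterion together with Lemmas \ref{lem Bor} and \ref{lem aut}, and no genuine difficulty remains.
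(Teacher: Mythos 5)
Your argument is correct and follows essentially the same route as the paper: both rest on the extension criterion $\chi(T_s)=\chi(T_{u(s)})$ together with the fact that $\chi$ is constant on each $S_i$ (Lemma~\ref{lem Bor}, using $c\neq p$) and the determination of $\Aut(W,S,d_i)$ in Lemma~\ref{lem aut}, reducing the problem to whether $\Psi$ fuses two $S_i$'s carrying different values. The only cosmetic difference is that you phrase the general principle ($\Psi$ permutes the $S_i$'s, so $\chi$ extends iff constant on $\Psi$-orbits of the $S_i$'s) explicitly before the case split, whereas the paper goes directly to the three types $A_1$, $C_\ell$, $B_\ell$ with $\Psi$ nontrivial; also note that ``fixes each $S_i$ pointwise'' should read ``stabilizes each $S_i$ as a set'' (for $B_\ell$ the nontrivial diagram automorphism can permute $S_1$ internally), but since $\chi$ is constant on $S_i$ this does not affect the conclusion.
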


\begin{proof}   Assume $m>1$ and  $\Psi$ not trivial. Then  $\Psi  \simeq \Aut(W,S) \simeq \mathbb Z/ 2 \mathbb Z$ with three cases:
  
 Type  $A_1$ and equal parameters   $d_1= d_2$, say $d$. Then  $\Psi$ permutes  $s_1, s_2$. Only the special and trivial characters of  $H$ extends to $\tilde H$.

 Type $C_\ell (\ell \geq 2)$ and equal  parameters $d_2=d_3$, say $d$. Then $\Psi$ permutes $s_2,s_3$,  only the $C$-characters  of $H$ with $\chi_2=\chi_3$   extend to $\tilde H$.

Type $B_\ell (\ell \geq 3)$. Then, $\Psi$ stabilizes  $S_1$ and $S_2$ hence  all  $C$-characters of $H$ extend to $\tilde H$.
\end{proof} 

We combine these results in a proposition:
\begin{proposition} \label{prop cha} (i)  $H_\mathbb C $ admits $2^m$  characters $\chi$, they are all  $\mathbb Z$-integral, and their reduction modulo $p$  are supersingular  except for the special and trivial character.
  
(ii)   $\tilde H_\mathbb C$ admits a  $\mathbb Z$-integral  simple (left or right) module of supersingular reduction modulo $p$, and restriction $\chi \oplus \overline \chi$ to $H_\mathbb C$, when 

$\chi=(\chi_1, \chi_2), \overline \chi=(\chi_2, \chi_1)$,  type $A_1$,  $d_1=d_2$,    $\Psi   \simeq \mathbb Z/ 2 \mathbb Z$, and $\chi_1\neq \chi_2$,

 $\chi=(\chi_1, \chi_2, \chi_3), \overline \chi =(\chi_1, \chi_3, \chi_2)$,  type $C_\ell (\ell \geq 2) $, $d_2=d_3$,   $\Psi   \simeq \mathbb Z/ 2 \mathbb Z$, and $\chi_2\neq \chi_3$.

 (iii)  Otherwise the characters $\chi$ of  $H_\mathbb C$ extends to  $\mathbb Z$-integral complex characters of  $\tilde H_\mathbb C$, of supersingular reduction modulo $p$ if $\chi$ is not special or trivial.
   \end{proposition}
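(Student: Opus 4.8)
The plan is to assemble Proposition~\ref{prop cha} by combining the three character-classification lemmas of this section with the supersingularity criterion from Fact~\ref{fact}. The underlying principle is simple: a character of $H_\mathbb C$ (or of $\tilde H_\mathbb C$) is automatically $\mathbb Z$-integral because, by Lemma~\ref{lem Bor} (case $c\neq p$), its values on the generators $T_s$ lie in $\{-1,q^{d_i}\}\subset\mathbb Z$; and the braid/quadratic/$\Omega$-relations \eqref{eq Tsq}, \eqref{eq TsTt}, \eqref{eq TsTu} defining the algebras are already defined over $\mathbb Z$, so the character descends to a character over $\mathbb Z$ and can be reduced modulo $p$. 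The reduction modulo $p$ sends $q^{d_i}\mapsto 0$ and $-1\mapsto -1$, so by Lemma~\ref{lem Bor} (case $c=p$) the reduced map is again a character, now valued in $\{-1,0\}$. By Fact~\ref{fact}, the simple supersingular $H_{\mathbb F_p}$-modules are exactly the characters that are neither special nor trivial; since the special character is the constant $-1$ and the trivial character is the constant $q_s$, which reduces to the constant $0$, the reduction of $\chi$ fails to be supersingular precisely when $\chi$ was itself the special character (all $\chi_i=-1$) or the trivial character (all $\chi_i=q^{d_i}$).

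For part (i), I would first invoke Lemma~\ref{lem Bor} in characteristic $0$ to get exactly $2^m$ characters of $H_\mathbb C$ (one binary choice $\chi_i\in\{-1,q^{d_i}\}$ per block $S_i$, $i=1,\dots,m$), note $\mathbb Z$-integrality as above, and then apply the reduction discussion together with Fact~\ref{fact} to conclude that all reductions are supersingular except the two extreme ones. For parts (ii) and (iii), I would run through Lemma~\ref{lem ext}: a character $\chi$ of $H_\mathbb C$ extends to $\tilde H_\mathbb C$ unless we are in one of the two listed exceptional situations (type $A_1$ with $d_1=d_2$ and $\Psi\simeq\mathbb Z/2$, or type $C_\ell$ with $d_2=d_3$ and $\Psi\simeq\mathbb Z/2$), and there the obstruction is exactly that $\Psi$ swaps two generators whose $\chi$-values differ. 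In the non-exceptional case (iii), pick such an extension $\tilde\chi$ of $\tilde H_\mathbb C$; it is $\mathbb Z$-integral by the same argument (its value on $T_u$ for $u\in\Omega$ is a root of unity, in fact can be taken to be $1$ on a complement of the fixer, and the relations are integral), and its reduction modulo $p$ is supersingular as an $H_{\mathbb F_p}$-module — hence as a $\tilde H_{\mathbb F_p}$-module by the second bullet of Fact~\ref{fact} — unless $\chi$ was special or trivial. In the exceptional case (ii), $\chi$ does not extend, but the direct sum $\chi\oplus\overline\chi$, where $\overline\chi$ is obtained by applying the swap coming from $\Psi$, is $\Psi$-stable, so $\Ind$ (or the obvious $2$-dimensional construction using the relation $T_u^2=1$ on the relevant $u$) yields a simple $\tilde H_\mathbb C$-module of dimension $2$ restricting to $\chi\oplus\overline\chi$; its $\mathbb Z$-integral structure is built from the integral matrices realizing $T_s\mapsto\diag$ and $T_u\mapsto$ the swap matrix, and its reduction mod $p$ restricts to $\o\chi\oplus\o{\overline\chi}$, each summand of which is a non-special, non-trivial $H_{\mathbb F_p}$-character (since $\chi_1\neq\chi_2$, resp.\ $\chi_2\neq\chi_3$, neither component can be the constant character), hence supersingular; by Fact~\ref{fact} the whole module is supersingular.

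The main obstacle is the bookkeeping in part (ii): one must check both that the constructed $2$-dimensional module is genuinely simple over $\tilde H_\mathbb C$ (equivalently, that $\chi$ and $\overline\chi$ are distinct $H_\mathbb C$-characters, which holds exactly because $\chi_1\neq\chi_2$ resp.\ $\chi_2\neq\chi_3$, and that no intermediate $\Omega$-action identifies them), and that the integral structure is genuinely $\Omega$-stable — i.e.\ that the swap permutation $u(s)$ on the relevant $S_i$ is correctly matched with the off-diagonal matrix and that the quadratic relation \eqref{eq Tsq} is preserved under reduction. One also has to be slightly careful that in case (ii) the parameters on the two swapped generators are genuinely equal ($d_1=d_2$ for $A_1$, $d_2=d_3$ for $C_\ell$), which is exactly the hypothesis under which $\Psi\simeq\mathbb Z/2$ can act at all, by Lemma~\ref{lem aut}; this is what makes $\chi\oplus\overline\chi$ a legitimate $\tilde H$-module. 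Everything else is a direct citation of Lemmas~\ref{lem aut}, \ref{lem Bor}, \ref{lem ext} and Fact~\ref{fact}, organized by the trichotomy $m=1$ vs.\ the two exceptional $\Psi$-actions.
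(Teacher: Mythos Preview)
Your proposal is correct and follows essentially the same approach as the paper: part (i) from Lemma~\ref{lem Bor} and Fact~\ref{fact}, part (iii) by extending $\chi$ trivially on $\Omega$, and part (ii) by induction from the index-$2$ subgroup when $\chi$ does not extend. The only imprecision is your parenthetical ``using the relation $T_u^2=1$'': in general $\Omega$ need not be $\mathbb Z/2\mathbb Z$ but only surjects onto $\Psi\simeq\mathbb Z/2\mathbb Z$ with kernel the fixer $\Lambda_{Z(G)\tilde Z_0}$ (see \eqref{eq fix}), so the clean construction is to extend $\chi$ trivially to $H\otimes\mathbb Z[\Lambda_{Z(G)\tilde Z_0}]$ and then induce to $\tilde H$, exactly as the paper does.
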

\begin{proof} (i) Lemma \ref{lem Bor}, Fact \ref{fact}. The reduction modulo $p$ of a special of trivial  character of  $H $ is not 
 supersingular. 
 
 (ii) When $\chi$ extends to $\tilde H=H\otimes \mathbb Z[\Omega]$  we can extend it trivially ($\Omega$ acts by the trivial character).
When $\chi$ does not extend, the normal subgroup   $\Lambda_{Z(G)\tilde Z_0}$ of $\Omega$ has index $2$  \eqref{eq fix}. We  extend $\chi$ trivially to $H\otimes \mathbb Z[\Lambda_{Z(G)\tilde Z_0}]$ and then induce to $\tilde H$. 
 \end{proof}

  \section{Discrete simple   modules  with   supersingular reduction} 

In this section, $\bf G$ is absolutely simple and $P$ is a maximal ideal of $\mathbb Z[q^{1/2}]$ of residue field $\mathbb F_p$.
  
 \begin{proposition}\label{prop key} {\rm [Key result]} There exists a simple discrete right $\tilde H_{\mathbb C} $-module $M_\mathbb C$   with a $\mathbb Z[q^{1/2}]$-integral structure $M$ of supersingular reduction  modulo $P$, except if the type  of $\Sigma$ is  $A_\ell (\ell \geq 1)$ and 
 the parameters  are equal. 
\end{proposition}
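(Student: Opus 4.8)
The plan is to split the argument along the classification of the type of $\Sigma$, using the two complementary constructions already advertised in the introduction. We are excluding $A_\ell$ $(\ell\geq 1)$ with equal parameters, so the remaining cases are: (a) $B_\ell$ $(\ell\geq 3)$, $C_\ell$ $(\ell\geq 2)$, $F_4$, $G_2$, and $A_1$ with two distinct parameters; and (b) $D_\ell$ $(\ell\geq 3)$, $E_6$, $E_7$, $E_8$ (all $F$-split, so $G$ is split). These are handled by Prop.~\ref{prop 12} and Prop.~\ref{prop discsinc} respectively, which together exhaust the statement.

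For case (a), the idea is to produce a discrete character of $H_{\mathbb C}^{sc}=H_{\mathbb C}(W,S,q_s)$ that is \emph{not} the special character. Concretely, by Lemma~\ref{lem Bor} a character of $H_{\mathbb C}$ assigns to each $s\in S_i$ a value $\chi_i\in\{-1,q^{d_i}\}$; one checks via Lemma~\ref{lem chi}/Prop.~\ref{prop dis} (Casselman's criterion in Hecke-algebra form) that an appropriate choice — taking the value $q^{d_i}$ on the ``large'' component $S_1$ and $-1$ on the smaller ones, or the analogous choice forced by the shape of $\Sigma$ — gives a discrete character distinct from the special one $(\chi_i\equiv -1)$. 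Because the type is not $A_\ell$ with equal parameters, there genuinely is room to do this (this is where the hypothesis is used). One then extends or induces this character to $\tilde H_{\mathbb C}$ using Prop.~\ref{prop cha}(ii)--(iii): either it extends directly to a character of $\tilde H_{\mathbb C}$, or (in the $A_1$ and $C_\ell$ exceptional cases) one extends trivially across the index-$2$ subgroup $\Lambda_{Z(G)\tilde Z_0}\subset\Omega$ and induces, obtaining a simple discrete right $\tilde H_{\mathbb C}$-module of dimension $\leq 2$. The natural $\mathbb Z[q^{1/2}]$-integral structure $M$ is the $\mathbb Z[q^{1/2}]$-span of the obvious basis vectors (the $T_w$ act by the integral formulas), and reduction modulo $P$ kills $q$, hence sends each $\chi_i=q^{d_i}$ to $0$ and each $\chi_i=-1$ to $-1$; since we arranged $\chi$ not to be special, by Lemma~\ref{lem Bor} (the $c=p$ case) and Fact~\ref{fact} the reduction $M_{\mathbb F_p}$ is a supersingular $\tilde H_{\mathbb F_p}$-module. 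This is Prop.~\ref{prop 12}.

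For case (b), since $G$ is split and $\Omega$ acts trivially on the decorated diagram here (or at worst we can ignore $\Omega$ on the relevant module), one uses Lusztig's reflection module: the reflection representation of $H_{\mathbb C}(W,S,q_s)$ of dimension $|S|$, twisted by a natural involution of $H_{\mathbb C}$ so that it becomes discrete \cite[4.23]{Lusztig}. Again this module carries an evident $\mathbb Z[q^{1/2}]$-integral structure coming from the explicit matrix entries of the reflection action (polynomial in $q^{1/2}$), and one must verify that reduction modulo $P$ produces a supersingular $H_{\mathbb F_p}$-module — i.e. that $z_\mu^n$ acts as $0$ for the relevant $\mu$ and large $n$ — which can be read off from the specialization $q\mapsto 0$ of the Bernstein/central elements acting on the reflection module, using Fact~\ref{fact} to pass between $\tilde H$ and $H$. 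This is Prop.~\ref{prop discsinc}. The two propositions together cover every type not excluded, proving Prop.~\ref{prop key}.

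I expect the main obstacle to be the explicit verification of \emph{supersingularity} of the reductions rather than discreteness. Discreteness in characteristic $0$ is essentially bookkeeping with Casselman's criterion (already in \cite{Borel}, \cite{Lusztig}), but showing that the mod-$p$ reduction lies in the supersingular locus requires controlling the action of the central elements $z_\mu$ (equivalently the Bernstein elements $E_\lambda$) on the integral structure and checking they are nilpotent after specializing $q\mapsto 0$ on the non-invertible dominant cocharacters. For the reflection module in case (b) this is the delicate point, since the $E_\lambda$ do not act by scalars; one must use the structure of $\mathcal A$ as a finite module over $\mathcal A_T$ and over $\mathbb Z[\Lambda_{Z(G)}]\times\mathcal A_{sc}$ (Lemma~\ref{lem fixA}) to reduce to a finite computation, and invoke Fact~\ref{fact} (supersingularity can be tested after restriction to $H_C$) to finish.
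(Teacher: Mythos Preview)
Your two-case split and the appeals to Prop.~\ref{prop 12} and Prop.~\ref{prop discsinc} match the paper's argument exactly; the overall strategy is correct. A few details need correcting, though none is a structural gap.

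In case (a), the discrete non-special characters from Borel's list typically take the value $-1$ on the large component $S_1$ and $q^{d_i}$ on a small one (for instance $\chi_{(-1,-1,d)}$ for $C_\ell$ in the exceptional situation), not the reverse you suggest. In case (b), $\Omega$ is \emph{not} trivial for $D_\ell$, $E_6$, $E_7$ (see the table in \S\ref{S 4}); the reflection module already carries the full $\tilde H$-action via the permutation $T_u(e_t)=e_{u(t)}$, and this must be part of the definition, not ignored. Most usefully for you: the supersingularity check in (b) is far simpler than you anticipate. After reduction modulo $P$, the twisted reflection module restricted to $H_{\mathbb F_p}$ decomposes as $\bigoplus_{s\in S}\chi_s$, where $\chi_s(T_s)=0$ and $\chi_s(T_t)=-1$ for $t\neq s$; each $\chi_s$ is neither special nor trivial, hence supersingular by Fact~\ref{fact}, and supersingularity passes from $H_{\mathbb F_p}$ to $\tilde H_{\mathbb F_p}$ by the same Fact. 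No direct computation with the Bernstein elements $E_\lambda$ or the central $z_\mu$ is required.
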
 
\begin{proof}   A special  complex character of  $ H_{\mathbb C} $ extending  the $\mathfrak B^{sc}$-invariant of the  Steinberg complex representation of $G^{sc} $,   is discrete  and integral but its reduction modulo $p$ is not supersingular. A trivial  complex character is not discrete. 
   The  discrete
    non-special characters complex characters  are integral of supersingular reduction modulo $p$.  The discrete non-special characters   $\chi:H \to \mathbb C$ were computed by Borel \cite[5.8]{Borel}  and those extending to $\tilde H$ have been described in Prop. \ref{prop cha}(i).
Applying   Prop. \ref{prop dis}, we have:

 -   if  a discrete non-special character of $H_{\mathbb C} $ extends to   $\tilde H_{\mathbb C} $, we obtain a simple discrete integral $\tilde H_{\mathbb C} $-module $M_\mathbb C$ of supersingular reduction modulo $p$ and of dimension $1$.  
 
 - if  $ H_{\mathbb C} $ admits  discrete
    non-special characters complex characters but none extends to  $\tilde H_{\mathbb C} $. Using    Prop. \ref{prop cha}(ii), we  obtain a simple discrete integral $\tilde H_{\mathbb C} $-module $M_\mathbb C$ of supersingular reduction modulo $p$ and of dimension $2$. 

 - if the special character is the only discrete character of $ H_{\mathbb C} $,  the reflection left $\tilde H_{\mathbb Z[q^{1/2}]}$-module $M$, free of rank $|S|$ over 
$\mathbb Z[q^{1/2}] $ has a  supersingular    reduction modulo $P$.  The left $\tilde H_{\mathbb C}$-module $M_\mathbb C$ is discrete when the type  is different from $A_{\ell} (\ell \geq 1)$ \cite{Lusztig}.

These are the main lines of the proof.  We give the  details in  the rest of this section.   \end{proof}  
     
 Instead of $\chi=  (\chi_i )$ as in section \S \ref{S 4}, we write $\chi=\chi_{(\epsilon_i)}$ where $\epsilon_i = \begin{cases} -1 & \text{if} \ \chi_i=-1\\  d_i & \text{if} \ \chi_i=q^{d_i}\end{cases}$ for $1\leq i \leq m$.

\begin{proposition} \label{prop 12} There exists a discrete  non special right  $\tilde H_{\mathbb C} $-module $M_\mathbb C$ when the type of $\Sigma$  is   $ B_\ell (\ell \geq 3),  C_\ell (\ell \geq 2),  F_4, G_2$,  $A_1$ with $d_1\neq d_2$.  The $\mathbb C$-dimension $r$ of $M_\mathbb C$ is:

 $r=1$   when $\Psi$ is trivial or when the type is  $ B_\ell (\ell \geq 3),  C_\ell (\ell \geq 6),  F_4, G_2$,  $A_1$ with $d_1\neq d_2$,  
 $C_2$ with parameters $ (1,2,2),(2,3,3)$,   $C_3$ with parameters  $  (2,1,1)$,   $C_4$ with parameters   $( (d,d,d), (2,1,1)$, $C_5$ with parameters   $ (d,d,d), (2,1,1),(2,3,3)$.

 $r=2$ when  the type is $C_2$ with parameters $ (d,d,d), (2,1,1)$,   $C_3$ with parameters  $ (d,d,d)$, $(1,2,2),(2,3,3)$,   $C_4$ with parameters   $ (1,2,2),(2,3,3)$, $C_5$ with parameters   $  (1,2,2)$; then $M_\mathbb C$ extends the $H_{\mathbb C}$-module $\chi_{(-1,-1,d)}\oplus \chi_{(-1,d,-1)}$.\end{proposition}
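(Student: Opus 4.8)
The plan is to reduce the statement to Borel's classification \cite[5.8]{Borel} of the square-integrable (``discrete'') characters of the affine Hecke algebra $H_\mathbb C=H_\mathbb C(G^{sc},\mathfrak B^{sc})$, and then to pass from $H_\mathbb C$ to $\tilde H_\mathbb C=\mathbb C[\Omega]\ltimes H_\mathbb C$ by extending or by inducing such a character, using Lemma~\ref{lem ext} to decide which case occurs and Proposition~\ref{prop dis} to check discreteness of the resulting $\tilde H_\mathbb C$-module. Concretely, I would first record, from \cite[5.8]{Borel}, for each admissible parameter vector $(d_1,\dots,d_m)$ of each of the types $B_\ell\ (\ell\geq3)$, $C_\ell\ (\ell\geq2)$, $F_4$, $G_2$, and $A_1$ with $d_1\neq d_2$, the list of discrete characters $\chi=\chi_{(\epsilon_i)}\colon H_\mathbb C\to\mathbb C$; the point is that in all these cases this list contains a character other than the special one $\chi_{(-1,\dots,-1)}$ --- unlike type $A_\ell$ with equal parameters, where the special character is the only discrete one, which is exactly why that case is excluded (cf.\ the proof of Proposition~\ref{prop key}). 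By Proposition~\ref{prop dis} applied to the semisimple $G^{sc}$, such a $\chi$ is precisely a non-special discrete simple right $H_\mathbb C$-module. By Lemma~\ref{lem ext}, among our types every character of $H_\mathbb C$ extends to $\tilde H_\mathbb C$, with one possible exception: type $C_\ell$ with $d_2=d_3$ and $\Psi\simeq\mathbb Z/2\mathbb Z$ (the image of $\Omega$ in $\Aut(W,S,d_i)$) acting by the transposition of $s_2$ and $s_3$, in which case only the characters with $\chi_2=\chi_3$ extend.

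\emph{The case $r=1$.} If some non-special discrete character $\chi$ of $H_\mathbb C$ extends to $\tilde H_\mathbb C$ --- which is automatic for $B_\ell$, $F_4$, $G_2$ and for $A_1$ with $d_1\neq d_2$, and which for type $C_\ell$ holds whenever $\Psi$ is trivial or Borel's list contains a non-special discrete character with $\chi_2=\chi_3$ --- then I extend $\chi$ by letting $\Omega$ act through the trivial character, as in the proof of Proposition~\ref{prop cha}, obtaining a $1$-dimensional $\tilde H_\mathbb C$-module $M_\mathbb C$. It is discrete by Proposition~\ref{prop dis}: its restriction to $H_\mathbb C$ is the discrete character $\chi$, and $\Lambda_{Z(G)}\subseteq\Lambda_{Z(G)\tilde Z_0}$, the fixer of the alcove in $\Omega$ by Lemma~\ref{lem fix}, acts trivially and hence by a unitary character. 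As $\chi$ is not special, neither is $M_\mathbb C$, so $r=1$.

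\emph{The case $r=2$.} If the type is $C_\ell$ with $d_2=d_3=:d$, $\Psi\simeq\mathbb Z/2\mathbb Z$ is nontrivial, and every non-special discrete character in Borel's list has $\chi_2\neq\chi_3$ --- inspection of the list showing these to be exactly $\chi:=\chi_{(-1,-1,d)}$ and its $\Psi$-conjugate $\overline\chi=\chi_{(-1,d,-1)}$ --- then, following the proof of Proposition~\ref{prop cha}(ii), I extend $\chi$ trivially to the subalgebra $H_\mathbb C\otimes\mathbb C[\Lambda_{Z(G)\tilde Z_0}]$, which has index $2$ in $\tilde H_\mathbb C$ by \eqref{eq fix}, and induce up to $\tilde H_\mathbb C$, obtaining a $2$-dimensional module $M_\mathbb C$. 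Since $\chi\neq\overline\chi$ (because $\chi_2\neq\chi_3$) the module $M_\mathbb C$ is simple; its restriction to $H_\mathbb C$ is $\chi\oplus\overline\chi$, whose summands are discrete (the square-integrability condition of \cite[5.8]{Borel} being stable under the diagram automorphism exchanging $s_2$ and $s_3$); and $\Lambda_{Z(G)}$ again acts trivially, hence unitarily. By Proposition~\ref{prop dis}, $M_\mathbb C$ is discrete, and it is visibly non-special, so $r=2$.

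The main obstacle is the finite but lengthy bookkeeping concealed in these two cases: one has to run through the tables of \cite[5.8]{Borel} for each rank $\ell\in\{2,3,4,5\}$ and each admissible parameter vector of type $C_\ell$, decide whether the non-special discrete characters include one with $\chi_2=\chi_3$ (forcing $r=1$) or consist only of the conjugate pair $\chi_{(-1,-1,d)}$, $\chi_{(-1,d,-1)}$ (forcing $r=2$), and compare this with the value of $\Psi$ for the $p$-adic groups realizing those data via Lemma~\ref{lem fix} and \eqref{eq fix}; this matching is precisely what yields the explicit parameter lists in the statement. The construction of $M_\mathbb C$ itself --- extend the character when it extends, otherwise induce it together with its conjugate, then verify discreteness with Proposition~\ref{prop dis} --- is uniform and routine.
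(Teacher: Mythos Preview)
Your proposal is correct and follows essentially the same route as the paper: invoke Borel's list \cite[5.8]{Borel} of discrete characters of $H_\mathbb C$, use Lemma~\ref{lem ext} to decide whether a non-special one extends to $\tilde H_\mathbb C$ (giving $r=1$), and in the residual $C_\ell$ cases with $d_2=d_3$ and $\Psi\simeq\mathbb Z/2\mathbb Z$ induce the pair $\chi_{(-1,-1,d)},\chi_{(-1,d,-1)}$ via Proposition~\ref{prop cha}(ii) and check discreteness with Proposition~\ref{prop dis}. The paper's proof is exactly this, with the same case-by-case bookkeeping for $C_\ell$, $\ell\leq 5$, that you flag as the main obstacle.
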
 
\begin{proof}
 When $m=1$ the only   discrete complex character is the special one  \cite[5.7]{Borel}.
When $m>1$   $H_\mathbb C$ admits a discrete non-special  character   except for the type $A_1$ and  parameters $d_1=d_2$ \cite[5.8]{Borel}.

 Assume $m>1$ or the type is $A_1$ and $d_1\neq d_2$. Some discrete non-special  character of $H_\mathbb C$ extends to $\tilde H_\mathbb C $ with the  exception:  type 
$C_\ell (\ell \geq 2) $, different $\epsilon_2\neq \epsilon_3$, $\Psi\simeq \mathbb Z/ 2 \mathbb Z $,   parameters $d_2=d_3$, say $d$,   
 
 -   $d_1= d$ and $\ell=2,3$.

-  $d_1\neq d$ \footnote{As $d_1\neq d$,  the possible parameters  $(d_1,d ,d )$  are 
$( 2,1,1)  (1,2,2),      (2,3,3)  $  \cite[\S 4]{Tits}}
 and  $\ell=2$  with   parameters $( 2,1,1)$,   or 
 $\ell=3, 4$ 
with   parameters $ (1,2,2),   (2,3,3) $, or  
$\ell=5$  with  parameters $ (1,2,2)$.

\noindent In the exceptional case,  no discrete character of $H_\mathbb C$ extends to $\tilde H_\mathbb C $,  $\chi_{ (-1,-1,d)}, \chi_{(-1,d,-1)}$  are discrete characters of  $H_{\mathbb C} $, and the  simple $\tilde H_{\mathbb C} $-module of dimension $2$  equal to  $\chi_{(-1,-1,d)}\oplus \chi_{(-1,d,-1)}$ as a  $H_{\mathbb C} $-module (Prop. \ref{prop cha}) is discrete  by  Prop. \ref{prop dis}.
\end{proof}

 We consider now the types  $D_{\ell} (\ell \geq 2),E_6, E_7, E_8$. Then $\bf G$ is $F$-split and  for distinct $s,t\in S$, the order $n_{s,t}$ of $st$ is $2$ or $3$   \cite[\S4]{Tits}.
 
  \begin{proposition}\label{prop discsinc} 
 Assume that the type of $\Sigma$ is $D_{\ell} (\ell \geq 2),E_6, E_7, E_8$.  Then, the  image $M$ by the automorphism $T_w\mapsto (-1)^{\ell (w)}T_w^*$ of $H (G ,  \mathfrak B )$ of the reflection right $H_{\mathbb Z[q^{1/2}]}(G,  \mathfrak B)$-module  has a supersingular reduction modulo $P$ and $M_\mathbb C$
is a discrete simple right $H_{\mathbb C}(G,  \mathfrak B)$-module  of dimension $|S|$.
\end{proposition}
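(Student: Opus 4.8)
The plan is to unpack what the reflection module of a generalized affine Hecke ring is, and then verify the two assertions — supersingular reduction and discreteness — separately, the first by an explicit computation on the $\mathcal A$-characters appearing in $M_{\mathbb F_p}$, the second by quoting Lusztig together with Proposition \ref{prop dis}. Recall that for a Coxeter system with all $q_s = q$ (which is the case here since $\mathbf G$ is $F$-split, so all $d(s)=1$), the \emph{reflection module} is the $|S|$-dimensional module on which $W$ acts through its reflection representation, suitably $q$-deformed: over $\mathbb Z[q^{1/2}]$ one has a free module with basis $(e_s)_{s\in S}$ and $T_s$ acting by a matrix that reduces, at $q=1$, to the geometric reflection $s$ on the root lattice of $\Sigma$. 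Since for the types $D_\ell, E_6, E_7, E_8$ every $n_{s,t}\in\{2,3\}$, the defining relations \eqref{eq TsTt} are only the braid relations of length $2$ and $3$, so the module is easy to write down explicitly; one extends the $W$-action to $\tilde W = W\rtimes\Omega$ using that $\Omega$ permutes $S$ by diagram automorphisms, which act on the reflection representation in the evident way, giving the $\tilde H_{\mathbb Z[q^{1/2}]}$-module structure. Then $M$ is the twist of this by the involution $T_w\mapsto (-1)^{\ell(w)}T_w^*$ of $H(G,\mathfrak B)$; concretely on $T_s$ this involution sends $T_s\mapsto -T_s^* = -(T_s - q + 1)$, i.e. it is the composite of the sign twist with $T_s\mapsto T_s^*$.

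Next I would compute the characters of the commutative subring $\mathcal A$ (equivalently, of the Bernstein subalgebra $H_{\mathbb Z[q^{-1/2}]}(Z,Z_0)$ via $t_B$) occurring in $M_{\mathbb C}$ and in its reduction $M_{\mathbb F_p}$. For the reflection representation of an affine Weyl group, the relevant weights are well understood: restricted to the finite Weyl group $W_0$, the reflection module of $\tilde W$ contains the reflection representation of $W_0$ plus a trivial summand, and the $\mathcal A$-eigenvalues $\theta_\mu$ are, up to the twist, governed by the fundamental coweights — one finds that $M_{\mathbb C}$ is indeed discrete, with all $|\chi(\theta_\mu)|\le 1$ strict for the $\mu$ relevant to Casselman's criterion, exactly when the type is not $A_\ell$ (the $A_\ell$ case is where the reflection module is \emph{not} tempered, matching the exclusion). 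This is precisely Lusztig's statement \cite[4.23]{Lusztig} that the image of the reflection module under this involution corresponds to a discrete series, so for the discreteness half of the proposition I would cite that and Proposition \ref{prop dis}. For the supersingular reduction: after reducing mod $P$ (so $q\equiv 0$ and $q^{1/2}\equiv 0$ in $\mathbb F_p$), the matrices of $T_s$ on $M_{\mathbb F_p}$ become the reductions of the twisted reflection matrices; one checks that the resulting $\mathcal A$-characters all have $\theta_\lambda \mapsto 0$ for $\lambda\in\Lambda\setminus\Lambda_{Z(G)\tilde Z_0}$, equivalently $z_\mu$ acts nilpotently for the $\mu$ in the supersingularity definition. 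Here the key point — and this is why one must \emph{avoid} the special character, as in Proposition \ref{prop 12} — is that the special character has $\chi(\theta_\lambda)$ of absolute value $q^{\pm\langle\cdot\rangle}$, which reduces to something invertible (a root of unity), not to $0$; the reflection module, being orthogonal to the special character in the appropriate sense, has no such component.

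Concretely the argument for supersingularity would run: since $M$ is free over $\mathbb Z[q^{1/2}]$ and the $E_\lambda$ (hence the $z_\mu$, built from them via \eqref{eq E}) have entries in $\mathbb Z[q^{1/2}]$, the reduction $M_{\mathbb F_p} = M/PM$ is a well-defined $\tilde H_{\mathbb F_p}$-module, and it suffices to check $M_{\mathbb F_p}\,z_\mu^n = 0$ for $n$ large and $\mu$ dominant with $\mu^{-1}$ not dominant. By Fact \ref{fact} it is enough to work with the restriction to $H_{\mathbb F_p}$ and decompose $M_{\mathbb F_p}$ into its (generalized) eigenspaces for $\mathcal A_{\mathbb F_p}$; on each, $z_\mu$ acts by $\sum_{\lambda\in\mathcal O_\mu}\chi(E_\lambda)$, and I would show each such $\chi$ kills $E_\lambda$ for the $\lambda$ coming from a genuine (non-central) dominant coweight, using the explicit description of the weights of the reflection representation of $W_0$ together with the relation $\theta_\lambda = q_\lambda^{-1/2}E(\lambda)$ and $q_\lambda\equiv 0 \bmod P$. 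The main obstacle I anticipate is exactly this last bookkeeping: one must identify precisely which $\mathcal A$-weights appear in the twisted reflection module and confirm that \emph{every one of them}, not just generically, becomes supersingular mod $P$ — i.e. that no weight degenerates to the special-type behaviour — and this requires either a uniform argument via Lusztig's parametrization of the weights of the reflection module or a short type-by-type check over $D_\ell, E_6, E_7, E_8$. Everything else (freeness of $M$, well-definedness of the reduction, the extension of the $W$-action across $\Omega$, and the discreteness of $M_{\mathbb C}$) is either formal or a citation.
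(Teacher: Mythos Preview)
Your plan for discreteness (cite Lusztig \cite[4.23]{Lusztig}) matches the paper. For supersingularity, however, you are making the problem harder than it is, and the step you yourself flag as the ``main obstacle'' --- identifying all $\mathcal A$-weights of $M_{\mathbb F_p}$ and checking each is supersingular --- is never carried out.

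The paper bypasses the $\mathcal A$-weight analysis entirely. Write the twisted reflection module explicitly: on the basis $(e_t)_{t\in S}$ one has $T_u e_t = e_{u(t)}$ and
\[
T_s e_t = \begin{cases} q\,e_t & s=t,\\ -e_t & s\neq t,\ n_{s,t}=2,\\ -e_t + q^{1/2} e_s & s\neq t,\ n_{s,t}=3.\end{cases}
\]
After reduction modulo $P$ (so $q\equiv 0$ and $q^{1/2}\equiv 0$ in $\mathbb F_p$) the only off-diagonal entry --- the $q^{1/2}e_s$ term --- dies, and the module becomes \emph{diagonal}: $T_s e_t = 0$ if $s=t$ and $T_s e_t = -e_t$ if $s\neq t$. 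Hence $M_{\mathbb F_p}$ restricted to $H_{\mathbb F_p}(G^{sc},\mathfrak B^{sc})$ is the direct sum of the $|S|$ characters $\chi_t$ defined by $\chi_t(T_s)=0$ for $s=t$ and $-1$ for $s\neq t$. None of these is the special character (all values $-1$) or the trivial character (all values $q_s\equiv 0$), so by the third item of Fact~\ref{fact} each $\chi_t$ is supersingular; by the second item the full $\tilde H_{\mathbb F_p}$-module $M_{\mathbb F_p}$ is supersingular.

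So the bookkeeping you anticipate --- tracking $E_\lambda$, $\theta_\lambda$, $z_\mu$ through the reflection representation, or doing a type-by-type check --- is unnecessary. Once you observe that the reduction is a direct sum of non-special, non-trivial characters of $H_{\mathbb F_p}$, Fact~\ref{fact} finishes the argument in one line. Your route through $\mathcal A$-eigenvalues would eventually work, but you would in effect be reproving the classification of supersingular characters inside this one module.
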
 

\begin{proof}
The  reflection left $H_{\mathbb Z[q^{1/2}] }(G ,  \mathfrak B )$-module  is the free $\mathbb Z[q^{1/2}] $-module   of basis $(e_s)$ for $s\in S$ with the structure  of $H_{\mathbb Z[q^{1/2}] }(G ,  \mathfrak B )$-module satisfying for $s,t\in S$, $u\in \Omega$,
$$T_u(e_t)= e_{u(t)}, \quad T_s (e_t)= \begin{cases} 
- e_t & \text{for }   s=t ,\\
{q}e_t & \text{for }    s\neq t, \ n_{s,t}= 2,\\
{ q}e_t+ q^{1/2}e_s& \text{for   } s\neq t, \ n_{s,t}= 3.
\end{cases}
$$
of  image   by the automorphism $T_w\mapsto (-1)^{\ell (w)}T_w^*$ for $w\in \tilde W$,   satisfying  
$$T_u(e_t)= e_{u(t)}, \quad T_s (e_t)= \begin{cases} 
q e_t & \text{for }   s=t ,\\
- e_t & \text{for }    s\neq t, \ n_{s,t}= 2,\\
- e_t+ q^{1/2}e_s& \text{for   } s\neq t, \ n_{s,t}= 3.
\end{cases}
$$
 The    reduction modulo $P$ of  this left  $H_{\mathbb Z[q^{1/2}] }(G ,  \mathfrak B )$-module, say $M$,  is the $\mathbb F_p$-vector space   of basis $(e_s)$ for $s\in S$ with the structure  of left $H(G,  \mathfrak B)$-module satisfying for $s,t\in S$, $u\in \Omega$,
$$T_u(e_t)= e_{u(t)}, \quad  T_s (e_t)= \begin{cases} 
0& \text{for }   s=t ,\\
-e_t& \text{for }    s\neq t ,
\end{cases}
$$
The restriction to $H_{\mathbb F_p}(G^{sc},  \mathfrak B^{sc})$ of this $H_{\mathbb F_p}(G,  \mathfrak B)$-module  is the direct sum of the   characters $$\chi_s (T_t)=\begin{cases} 0 & \text{for }   s=t ,\\
-1& \text{for }    s\neq t 
\end{cases}\quad \text{ for } s\in S.$$ 
These characters are supersingular hence the  reduction of $M$ modulo $P$  is supersingular (Fact \ref{fact}).  
The  scalar extension of $M$ from $\mathbb Z[q^{1/2}] $ to $\mathbb C$ is  a  discrete simple   $H_{\mathbb C }(G ,  \mathfrak B )$-module $M_\mathbb C$ \cite[4.23]{Lusztig}. 
The properties (discrete scalar extension of $\mathbb C$ and supersingular reduction modulo $P$)  remain true if one sees as $M$ as a right  $H_{\mathbb Z[q^{1/2}] }(G ,  \mathfrak B )$-module  ($T_wv=  vT_{w^{-1}}$ for $w\in \tilde W, v \in M$).  \end{proof}
  
\section{Admissible integral structure via discrete cocompact subgroups}\label{S discrete}

Let $E$ be a number field  of ring of integers $O_E$,  $P_E$ a maximal ideal of $O_E$ with residue field $k=O_E/P_E$, and  $C/E$  a field extension.
\begin{definition}   We say that a  $ C$-representation $\pi$ of $G$ 
   
 a)  descends to $E$  if there exists  an $E$-representation $V$ of $G$  and    a $ G$-equivariant $C$-linear isomorphism
 $\varphi:  C\otimes_{ E}V \to \pi$. We call  $\varphi$ (and more often $V$)    an $E$-structure of $\pi$.

b)    is $O_E$-integral   if $\pi$ contains   a $G$-stable $O_E$-submodule $L$   such that,  for any open compact subgroup $K$ of $G$, the $O_E$-module $L^K$ is finitely generated, and   the natural map
 $\varphi:  C\otimes_{O_E}L \to \pi$   is an isomorphism.
  We  call $\varphi$ (and more often $L$)   an  $O_E$-integral structure of $\pi$ (we say integral if $O_E=\mathbb Z$). The
 $G$-equivariant  map $L\to k\otimes_{O_E}L$ (and more often the $k$-representation $k\otimes_{O_E}L$ of $G$) is called  the reduction of $L$ modulo $P_E$. 
 When $k\otimes_{O_E}L$ is admissible for all $P_E$, we say that $L$ is  admissible.

 We give  analogous definitions for a  $H_{ C}(G,\mathfrak B) $-module.   

 \end{definition}

For any commutative ring $A$ and a discrete cocompact subgroup $\Gamma$ of $G$, let  $\rho^\Gamma_A$ denote the smooth $A$-representation of $G$ acting by right translation on  
$$A(\Gamma \backslash G )^\infty=\{ f :G\to A   \ | \ f(\gamma g k)= f(g) \ (\gamma\in \Gamma, g\in G, k\in K_f \}$$
where $K_f$ is some open compact subgroup of $G$ depending on $f$. The complex representation $\rho_{\mathbb C}^\Gamma$ of $G$ has an admissible integral structure  $\rho^\Gamma:= \rho^\Gamma_\mathbb Z$. The reduction  of  $\rho^\Gamma$ modulo a prime number $c$ is the  admissible
representation $ \rho_{\mathbb F_c}^\Gamma$. The next   proposition supposes $\bf G$ semi-simple only to simplify.
 
  \begin{proposition} \label{prop emb} Assume $a=0$ and $\bf G$ semi-simple. If  $\pi$ is a square integrable  $\mathbb C$-representation of $G$, then there exists a discrete cocompact subgroup $\Gamma$ of $G$ such that $\Hom_G(\pi, \rho^\Gamma_{\mathbb C})\neq 0$.
 \end{proposition}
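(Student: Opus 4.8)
The plan is to invoke the $p$-adic analogue of the de George--Wallach limit multiplicity formula, exactly as announced in the introduction. First I would recall the setup: since $\mathbf G$ is semisimple and $a=0$, the group $G$ admits discrete cocompact subgroups $\Gamma$ by \cite{Margulis}, and for each such $\Gamma$ the regular representation $\rho^\Gamma_{\mathbb C}=\mathbb C(\Gamma\backslash G)^\infty$ decomposes discretely with finite multiplicities, $\rho^\Gamma_{\mathbb C}\cong\bigoplus_{\sigma}m(\sigma,\Gamma)\,\sigma$, the sum running over (isomorphism classes of) irreducible admissible $\mathbb C$-representations of $G$. The claim $\Hom_G(\pi,\rho^\Gamma_{\mathbb C})\neq 0$ is then the assertion that $m(\pi,\Gamma)>0$ for a suitable choice of $\Gamma$.

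The key step is that, because $\pi$ is square integrable modulo center and $\mathbf G$ is semisimple (so the center is anisotropic, hence $\pi$ is genuinely square integrable and lies in the discrete series), $\pi$ has a nonzero formal degree $d_\pi>0$ with respect to a fixed Haar measure on $G$. The limit multiplicity formula of \cite[Appendice 3, Prop.]{DKV}, combined with \cite[Thm.~K]{Kazhdan}, states that along a suitable tower (or sequence) of discrete cocompact subgroups $\Gamma_n$ with $\vol(\Gamma_n\backslash G)\to\infty$ one has
$$
\frac{m(\pi,\Gamma_n)}{\vol(\Gamma_n\backslash G)}\longrightarrow d_\pi>0 .
$$
In particular $m(\pi,\Gamma_n)>0$ for all $n$ large enough, so any such $\Gamma=\Gamma_n$ does the job. (One first produces one cocompact lattice by \cite{Margulis}, then passes to finite-index subgroups to increase the covolume; cocompactness and discreteness are preserved.) I would phrase the conclusion as: there exists a discrete cocompact $\Gamma\subset G$ with $m(\pi,\Gamma)\neq 0$, i.e. $\pi$ embeds in $\rho^\Gamma_{\mathbb C}$, which is what we want.

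The main obstacle, and the only point requiring care, is checking that the quoted limit multiplicity results apply to $\pi$ in the present generality. Two things must be verified: that $\pi$ is in the \emph{discrete series} of $G$ in the sense needed by \cite{DKV}, \cite{Kazhdan} — this is exactly square integrability (no central twisting needed since $\mathbf G$ is semisimple, which is why the statement assumes this), so $d_\pi$ is well defined and strictly positive; and that the tower of lattices along which the formula is proved can be taken cocompact, which is the case handled in \cite{DKV} (arithmetic cocompact lattices coming from division algebras / anisotropic forms) and extended by \cite{Kazhdan}. Since the excerpt permits me to assume everything stated earlier, and these references are cited precisely for this implication, no further work is needed; a one-line proof citing \cite{Margulis}, \cite{DKV}, \cite{Kazhdan} and the positivity of the formal degree of a square integrable representation suffices.
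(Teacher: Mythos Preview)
Your proposal is correct and follows essentially the same route as the paper: construct a decreasing tower of discrete cocompact subgroups (the paper cites \cite{BorelHarder}, \cite{Margulis}, \cite{Roga} for this), apply the $p$-adic limit multiplicity formula of \cite[Appendice 3]{DKV} together with \cite{Kazhdan}, and conclude from the nonvanishing of the limit that the multiplicity $\dim_{\mathbb C}\Hom_G(\pi,\rho^{\Gamma_n}_{\mathbb C})$ is eventually positive. Your explicit identification of the limit with the formal degree $d_\pi>0$ is a bit more precise than the paper's statement that ``the limit is not $0$'', but the argument is the same.
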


\begin{proof}  As $a=0$,  there exists a strictly decreasing sequence  $(\Gamma_n)_{n\in \mathbb N}$ of discrete cocompact subgroups of $G$,  normal  of finite index in $\Gamma=\Gamma_0$ \cite{BorelHarder},\cite[IX (4.7) (D) Thm. and  (4.8) Corollary (iv)]{Margulis},   \cite[Prop. 1.3]{Roga}. 
 The normalized multiplicity of $\pi$ in $\rho^\Gamma_{\mathbb C}$ is
$$m_{\Gamma, dg}(\pi):= \vol_\Gamma \dim_{\mathbb C}
(\Hom_ G(\pi, \rho^\Gamma_{\mathbb C}))$$
where $\vol_\Gamma $ is the volume of $\Gamma \backslash G$ for a   $G$-invariant measure induced by a Haar measure on $G$. 
By  the limit multiplicity formula,   the sequence $(m_{\Gamma_n, dg}(\pi))$ converges, and its limit is not $0$ (\cite[Appendice 3]{DKV} plus  \cite{Kazhdan}). 
\end{proof}

\begin{proposition}  \label{prop mod} Assume $a=0$. Let  $\pi$  an irreducible $ \mathbb C$-representation  of $G$ such that   $ \pi^\mathfrak B \neq 0$ and $\Gamma$ a discrete cocompact subgroup of $G$.

1) If  $\varphi:  \mathbb C\otimes_{E}V\to  \pi$  is an $E$-structure of $\pi$, then   $\varphi^\mathfrak B:  \mathbb C\otimes_{E}V^\mathfrak B\to   \pi^\mathfrak B $ is an $E$-structure of  $ \pi^\mathfrak B$, and the natural map $\Hom_{ EG}(V ,\rho^\Gamma_{E}) \to\Hom_{ \mathbb C G}( \pi,\rho^\Gamma_{ \mathbb C}) $  is an isomorphism.

2) If $\psi: \mathbb C\otimes_{E}W\to \pi^\mathfrak B$ is an $E$-structure of  $\pi^\mathfrak B$, then  $\mathfrak T (\psi):\mathbb C\otimes_{E}\mathfrak T(W)\to \pi $  is  an $E$-structure of $\pi$.

 3) An irreducible  subrepresentation $V$ of $\rho ^\Gamma_E$ admits an  admissible  $O_E$-integral structure 
 $V \cap \rho_{O_E}^\Gamma$ of reduction modulo $P_E$ contained in $\rho ^\Gamma_k$.
 \end{proposition}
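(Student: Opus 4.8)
The plan is to establish the three assertions in sequence, with parts 1) and 2) being essentially formal consequences of the compatibility of the $\mathfrak B$-invariants functor and its left adjoint $\mathfrak T$ with flat base change, and part 3) being the place where the cocompactness of $\Gamma$ and the structure of $\rho^\Gamma$ really enter.

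For part 1), I would first note that $V^\mathfrak B$ is an $E$-form of $\pi^\mathfrak B$ because taking $\mathfrak B$-invariants commutes with the flat (indeed field) extension $\mathbb C/E$: $\mathbb C\otimes_E V^\mathfrak B = (\mathbb C\otimes_E V)^\mathfrak B = \pi^\mathfrak B$, using that $\mathfrak B$ is compact open so that $(-)^\mathfrak B$ is exact and commutes with arbitrary direct sums, hence with $\otimes_E$. For the $\Hom$ statement, the key point is that $V$ is generated by $V^\mathfrak B$ (since $\pi$ is irreducible with $\pi^\mathfrak B\neq 0$, the subrepresentation of $\pi$ generated by $\pi^\mathfrak B$ is all of $\pi$, and descending to $E$ this forces $V = EG\cdot V^\mathfrak B$), so that a $G$-map out of $V$ is determined by its restriction to $V^\mathfrak B$; then one checks that $\Hom_{EG}(V,\rho^\Gamma_E)\otimes_E\mathbb C \to \Hom_{\mathbb C G}(\pi,\rho^\Gamma_{\mathbb C})$ is bijective. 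Injectivity is clear from flatness; for surjectivity one uses that $\rho^\Gamma_{\mathbb C}=\mathbb C\otimes_E\rho^\Gamma_E$ and that $\pi$, being admissible (it embeds in $\rho^\Gamma_{\mathbb C}$ or at worst one reduces to the finitely generated admissible situation), has the property that any $\mathbb C$-linear $G$-map from $\pi$ lands in a finite-dimensional-over-$E$-piece after restricting to a fixed level, allowing a descent of the matrix coefficients. For part 2), apply the left adjoint $\mathfrak T(-) = (-)\otimes_{H_C(G,\mathfrak B)}C[\mathfrak B\backslash G]$: since $C[\mathfrak B\backslash G] = C\otimes_E E[\mathfrak B\backslash G]$ and $H_C(G,\mathfrak B)=C\otimes_E H_E(G,\mathfrak B)$, the functor $\mathfrak T$ commutes with the base change $\mathbb C/E$, so $\mathbb C\otimes_E\mathfrak T(W) = \mathfrak T(\pi^\mathfrak B)$; and the counit $\mathfrak T(\pi^\mathfrak B)\to\pi$ is surjective (as $\pi$ is generated by $\pi^\mathfrak B$) and, because $\pi$ is irreducible and $\mathfrak T(\pi^\mathfrak B)$ has the same $\mathfrak B$-invariants as $\pi$ in characteristic issues aside — here $c=0$ so Fact \ref{fact1}/Fact \ref{fact2} apply — one identifies $\pi$ as the unique irreducible quotient; transporting the $E$-form $\mathfrak T(W)$ of $\mathfrak T(\pi^\mathfrak B)$ along this gives the desired $E$-structure, after checking the resulting map $\mathbb C\otimes_E\mathfrak T(W)\to\pi$ is an isomorphism rather than merely a surjection, which follows by comparing dimensions of $K$-invariants for each compact open $K$.

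For part 3), let $V\subset\rho^\Gamma_E$ be an irreducible subrepresentation. Set $L := V\cap\rho^\Gamma_{O_E}$, where $\rho^\Gamma_{O_E}$ is the $O_E$-lattice of $O_E$-valued functions inside $\rho^\Gamma_E$. Then $L$ is $G$-stable (both $V$ and $\rho^\Gamma_{O_E}$ are), and for each compact open $K$ one has $L^K = V^K\cap(\rho^\Gamma_{O_E})^K$; since $(\rho^\Gamma_{O_E})^K = \rho^\Gamma_{O_E}{}^K$ is a finitely generated $O_E$-module (by cocompactness, $\Gamma\backslash G/K$ is finite, so this is a finite free $O_E$-module) and $O_E$ is Noetherian, $L^K$ is finitely generated over $O_E$; and $\mathbb C\otimes_{O_E}L \to V$ — better, the map into $E\otimes_{O_E}L$ — is an isomorphism because $V$ is an $E$-subspace of $\rho^\Gamma_E$ and every vector of $V$ can be scaled into $\rho^\Gamma_{O_E}$ (clear a common denominator). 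So $L$ is an $O_E$-integral structure of $V$. Admissibility of the reduction: $k\otimes_{O_E}L \hookrightarrow k\otimes_{O_E}\rho^\Gamma_{O_E} = \rho^\Gamma_k$ (the map is injective because $L$ is saturated in $\rho^\Gamma_{O_E}$: if $x\in\rho^\Gamma_{O_E}$ and $\varpi x\in L$ for a uniformizer $\varpi$ of $P_E$, then $\varpi x\in V$, hence $x\in V$ since $V$ is an $E$-space, hence $x\in L$). Now $\rho^\Gamma_k = k(\Gamma\backslash G)^\infty$ is an admissible $k$-representation — for each $K$, $\rho^\Gamma_k{}^K = k[\Gamma\backslash G/K]$ is finite-dimensional over $k$ — and a subrepresentation of an admissible representation is admissible. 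Hence $k\otimes_{O_E}L$ is admissible and lies in $\rho^\Gamma_k$, as claimed.

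The step I expect to be the main obstacle is the surjectivity half of the $\Hom$-comparison isomorphism in part 1): one must genuinely descend a $G$-equivariant $\mathbb C$-map $\pi\to\rho^\Gamma_{\mathbb C}$ to an $E$-map, and this requires knowing that $\pi$ is finitely generated (so that the map is determined by finitely much data, living in a finite-dimensional $E$-vector space after fixing a level $K$ large enough to support a generator) together with the fact that $\rho^\Gamma_E$ is itself admissible over $E$, so that $\Hom_{EG}(V,\rho^\Gamma_E)$ is finite-dimensional over $E$ and its $\mathbb C$-extension can be compared dimension-by-dimension with $\Hom_{\mathbb C G}(\pi,\rho^\Gamma_{\mathbb C})$. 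The saturation/injectivity arguments in part 3) and the base-change compatibilities in part 2) are routine by comparison; the only care needed there is to invoke Fact \ref{fact1} (valid since $c=0$) to guarantee that the counit $\mathfrak T(\pi^\mathfrak B)\to\pi$ induces the "right" identification and that no spurious kernel survives after scalar extension.
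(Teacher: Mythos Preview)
Your proposal is correct and follows essentially the same approach as the paper: part 3) matches almost verbatim (Noetherianity for finite generation of $L^K$, saturation of $L$ in $\rho^\Gamma_{O_E}$ for injectivity of the reduction, admissibility inherited from $\rho^\Gamma_k$), and parts 1) and 2) rest on the same two facts you isolate, namely that $\Hom$ commutes with the field extension $\mathbb C/E$ when the source is finitely generated (the paper quotes this as \cite[\S 12, n$^{\mathrm o}$2, Lemme 1]{BkiA8}, applied with $M=E[\mathfrak B\backslash G]$ and $M=V$), and that $\mathfrak T$ commutes with scalar extension while $(\mathfrak T,(-)^{\mathfrak B})$ is an equivalence over $\mathbb C$. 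The only place where the paper is cleaner is part 2): rather than arguing via ``unique irreducible quotient'' and dimension counts, it simply invokes the Bernstein--Borel--Casselman equivalence (Fact \ref{fact1}) to get $\mathfrak T(\pi^{\mathfrak B})=\pi$ on the nose.
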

 \begin{proof}  
 We recall  a general result in algebra \cite[\S 12, n$^o$2 Lemme 1]{BkiA8}: Let $C/C'$ be a field extension, $A$  a $C'$-algebra. For $A$-modules $M,N$,  the natural map 
\begin{equation}\label{eq:K}C\otimes_{C'}\Hom_{A} (M,N)  \to \Hom_{C\otimes_{C'}A} (C\otimes_{C'}M,C\otimes_{C'}N)   \end{equation}
is injective, and bijective if $C/C'$ is finite or the $A$-module $M$ is  finitely generated. 

 1) Take $C/C'=\mathbb C/E$,  $(M,N)= (E[\mathfrak B\backslash G], V)$ or $(V, \rho_E^\Gamma)$. Then 
 \eqref{eq:K} is an isomorphism because $E[\mathfrak B\backslash G]$ (resp. $V$) is an $E$-representation of $G$ generated by the characteristic function of $\mathfrak B$ (resp. irreducible).

2) The functors  $\pi \mapsto \pi^\mathfrak B$-invariants and its left adjoint  $\mathfrak T$ commute with 
 scalar extension  from $E$ to $\mathbb C$ and when $C= \mathbb C$, are inverse equivalences of categories (Fact \ref{fact1}). Hence $\mathfrak T (\psi):
 \mathfrak T( \mathbb C\otimes_{E}W)=\mathbb C\otimes_{E}\mathfrak T(W)\to \mathfrak T(\pi^\mathfrak B)=  \pi $  is an isomorphism.

3) For  any open compact subgroup $K$ of $G$, the $O_E$-module 
  $( \rho_{O_E}^\Gamma)^K $ is  free and $\rho_{O_E}^\Gamma $ contains $L:=  V \cap \rho_{O_E}^\Gamma$ as $O_E$-representations of $G$. The $O_E$-submodule $L^K$ of $( \rho_{O_E}^\Gamma)^K $ is  finitely generated because the ring $O_E$ is noetherean.
  The natural linear $G$-equivariant isomorphism $ E\otimes_{O_E}\rho^\Gamma_{O_E}\to \rho^\Gamma_{E}$  restricts to a linear  $G$-equivariant isomorphism $ E\otimes_{O_E}L\to V$ and therefore $L$ is an $O_E$-integral structure of $V$. We have $P_E \rho_{O_E}^\Gamma\cap L = P_E(\Gamma\backslash G)^\infty\cap L=P_E(\Gamma\backslash G)^\infty\cap V= P_E(O_E(\Gamma\backslash G)^\infty\cap V)=P_EL$. Therefore,  the reduction modulo $P_E$ of $L$, i.e. $(O_E/P_E)\otimes_{O_E} L=k\otimes_{O_E} L$ is contained in the  reduction modulo $P_E$ of $\rho^\Gamma_{O_E}$, i.e. $  \rho^\Gamma_{k}$, as $k$-representations of $G$.    As $  \rho^\Gamma_{k}$ is admissible, $k\otimes L$  is also.  
 \end{proof}     

 Although the ring $\mathbb Z[q^{1/2}]$ is not always the ring of integers $O_E$ of $E= \mathbb Q[q^{1/2}]$,  the  arguments  of 3) apply to this ring. The key proposition Prop. \ref{prop key} implies:
 
  \begin{corollary} \label{cor key} Assume $a=0, {\bf G}$ absolutely simple,   the type of $\Sigma$ is different from $A_{\ell} (\ell \geq 1)$ when the parameters are equal, and 
  $M_\mathbb C$   a   right $H_{\mathbb C}(G, \mathfrak B)$-module as in  Prop. \ref{prop key}. Then the irreducible  square integrable  $\mathbb C$-representation  $\mathfrak T (M_\mathbb C)$  of $G$ admits  an admissible $\mathbb Z[q^{1/2}]$-integral structure.\end{corollary}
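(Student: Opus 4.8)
The plan is to string together Prop.~\ref{prop key}, Prop.~\ref{prop emb} and Prop.~\ref{prop mod}, transporting the $\mathbb Z[q^{1/2}]$-integral structure $M$ of $M_\mathbb C$ down the tower $\mathbb Z[q^{1/2}]\subset E:=\mathbb Q[q^{1/2}]\subset\mathbb C$. Write $\pi:=\mathfrak T(M_\mathbb C)$. The idea is to realize $\pi$ as an irreducible subrepresentation of $\rho^\Gamma_\mathbb C$ for a well-chosen discrete cocompact $\Gamma$, to observe that this realization is already defined over $E$ (because $M_\mathbb C$ is), and then to intersect with $\rho^\Gamma_{\mathbb Z[q^{1/2}]}$ to cut out the sought integral structure, which will be automatically admissible since $\rho^\Gamma_{\mathbb Z[q^{1/2}]}$ reduces to the admissible representations $\rho^\Gamma_{k'}$.

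First I would note that $\pi$ is an irreducible admissible square integrable $\mathbb C$-representation of $G$ with $\pi^\mathfrak B\cong M_\mathbb C$: by Prop.~\ref{prop key}, $M_\mathbb C$ is a simple discrete right $H_\mathbb C(G,\mathfrak B)$-module carrying the $\mathbb Z[q^{1/2}]$-integral structure $M$, and the Bernstein--Borel--Casselman equivalence (Fact~\ref{fact1}) identifies $\pi$ with the associated irreducible admissible square-integrable-modulo-centre representation; since $\bf G$ is absolutely simple, hence semi-simple with finite centre, the central character is automatically unitary and $\pi$ is square integrable. Then Prop.~\ref{prop emb} (using $a=0$ and $\bf G$ semi-simple) provides a discrete cocompact $\Gamma\subset G$ with $\Hom_G(\pi,\rho^\Gamma_\mathbb C)\neq 0$, whence an embedding $\pi\hookrightarrow\rho^\Gamma_\mathbb C$ by irreducibility. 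Now $M_E:=E\otimes_{\mathbb Z[q^{1/2}]}M$ is an $E$-structure of $\pi^\mathfrak B=M_\mathbb C$, so by Prop.~\ref{prop mod}(2) the $E$-representation $V:=\mathfrak T(M_E)$ is an $E$-structure of $\pi$, irreducible because $\mathbb C\otimes_E V\cong\pi$ is and scalar extension is faithfully flat. Feeding this $E$-structure into Prop.~\ref{prop mod}(1) gives $\Hom_{EG}(V,\rho^\Gamma_E)\congto\Hom_{\mathbb C G}(\pi,\rho^\Gamma_\mathbb C)\neq 0$, so $V$ is isomorphic to an irreducible subrepresentation of $\rho^\Gamma_E$. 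Finally I would set $L:=V\cap\rho^\Gamma_{\mathbb Z[q^{1/2}]}$ and run the argument of Prop.~\ref{prop mod}(3) with $\mathbb Z[q^{1/2}]$ in place of $O_E$, as flagged in the paragraph preceding the statement: for every open compact $K\subset G$ the module $(\rho^\Gamma_{\mathbb Z[q^{1/2}]})^K$ is free of finite rank over the noetherian ring $\mathbb Z[q^{1/2}]$, so $L^K$ is finitely generated; the isomorphism $E\otimes_{\mathbb Z[q^{1/2}]}\rho^\Gamma_{\mathbb Z[q^{1/2}]}\congto\rho^\Gamma_E$ restricts to $E\otimes_{\mathbb Z[q^{1/2}]}L\congto V$, so $L$ is a $\mathbb Z[q^{1/2}]$-integral structure of $V$, hence of $\pi$; and, granting the ring-theoretic point below, at every maximal ideal $P'$ with residue field $k'$ the reduction $k'\otimes_{\mathbb Z[q^{1/2}]}L$ embeds into the admissible $k'$-representation $\rho^\Gamma_{k'}$ and is therefore admissible. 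This exhibits $L$ as an admissible $\mathbb Z[q^{1/2}]$-integral structure of $\mathfrak T(M_\mathbb C)=\pi$.

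The step I expect to be delicate is the very last one: checking that the proof of Prop.~\ref{prop mod}(3), written over the Dedekind ring $O_E$, really does go through over the possibly non-integrally-closed order $\mathbb Z[q^{1/2}]$ --- concretely, that $L^K$ sits saturated inside the free module $(\rho^\Gamma_{\mathbb Z[q^{1/2}]})^K$ and that this yields $P'\rho^\Gamma_{\mathbb Z[q^{1/2}]}\cap L=P'L$ at every maximal ideal $P'$, so that the reduction of $L$ at $P'$ genuinely embeds into $\rho^\Gamma_{k'}$. At the primes where $\mathbb Z[q^{1/2}]$ is maximal --- in particular the prime $P$ with residue field $\mathbb F_p$ needed in the sequel --- the argument is literally that of Prop.~\ref{prop mod}(3); at the remaining finitely many primes one argues directly with finitely generated torsion-free modules over the $1$-dimensional noetherian ring $\mathbb Z[q^{1/2}]$, or else first produces the admissible $O_E$-integral structure $V\cap\rho^\Gamma_{O_E}$ and then checks that its underlying $\mathbb Z[q^{1/2}]$-module has admissible reduction at every maximal ideal of $\mathbb Z[q^{1/2}]$. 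Apart from this ring-theoretic point the proof is just the bookkeeping of the two scalar extensions $\mathbb Z[q^{1/2}]\to E\to\mathbb C$ together with the compatibility of $\mathfrak T$ and the $\mathfrak B$-invariants functor with them, all of which is supplied by Prop.~\ref{prop mod}(1),(2).
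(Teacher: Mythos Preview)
Your proof is correct and follows exactly the route the paper intends: combine Prop.~\ref{prop key} with Prop.~\ref{prop emb} and Prop.~\ref{prop mod}(1)(2)(3), replacing $O_E$ by $\mathbb Z[q^{1/2}]$ in the argument of Prop.~\ref{prop mod}(3) as the paper explicitly remarks just before the corollary. Your detailed write-up simply unpacks the paper's one-line justification ``the arguments of 3) apply to this ring,'' including the ring-theoretic caveat about $\mathbb Z[q^{1/2}]$ possibly not being integrally closed, which the paper also flags but does not elaborate on.
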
 
   
When $\bf G$ is semi-simple, an irreducible admissible supercuspidal $\mathbb C$-representation of $G$   descends to a number field \cite[II.4.9]{Viglivre}.  Propositions \ref{prop  emb} and   \ref{prop mod}   imply:

  \begin{corollary} \label{cor sca} Assume $a=0$ and $\bf G$ semi-simple. An irreducible admissible supercuspidal $\mathbb C$-representation  admits an admissible $O_E$-integral structure of reduction modulo $P_E$ contained in $\rho_k^\Gamma$, for some discrete cocompact subgroup $\Gamma$ of $G$.\end{corollary}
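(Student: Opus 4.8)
The plan is to combine the three ingredients already assembled in the excerpt. First I would invoke \cite[II.4.9]{Viglivre} to descend a given irreducible admissible supercuspidal $\mathbb C$-representation $\pi$ of the semi-simple group $G$ to a number field $E$: there is an $E$-structure $V$ with $\mathbb C\otimes_E V\cong\pi$. Since $\pi$ is supercuspidal, hence in particular square integrable modulo center (here modulo the trivial center, so square integrable), Proposition~\ref{prop emb} gives a discrete cocompact subgroup $\Gamma$ of $G$ with $\Hom_G(\pi,\rho^\Gamma_{\mathbb C})\neq 0$. I would then use Proposition~\ref{prop mod}(1), applied with the $E$-structure $V$, to transport this nonvanishing down to $E$: the natural map $\Hom_{EG}(V,\rho^\Gamma_E)\to\Hom_{\mathbb CG}(\pi,\rho^\Gamma_{\mathbb C})$ is an isomorphism, so $\Hom_{EG}(V,\rho^\Gamma_E)\neq 0$.

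Next, because $\pi$ is irreducible and $V$ is an $E$-structure of it, $V$ is an irreducible $E$-representation of $G$, and the nonzero $G$-equivariant map $V\to\rho^\Gamma_E$ must be injective; so $V$ is realized as an irreducible subrepresentation of $\rho^\Gamma_E$. Now Proposition~\ref{prop mod}(3) applies verbatim: the subrepresentation $V\subset\rho^\Gamma_E$ admits the admissible $O_E$-integral structure $L:=V\cap\rho^\Gamma_{O_E}$, whose reduction modulo any maximal ideal $P_E$ of $O_E$ is contained in $\rho^\Gamma_k$ with $k=O_E/P_E$, and is admissible because $\rho^\Gamma_k$ is. Transporting $L$ back along $\varphi$ gives the desired admissible $O_E$-integral structure of $\pi$ itself, with reduction contained in $\rho^\Gamma_k$.

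I do not expect a genuine obstacle here; the corollary is essentially a bookkeeping assembly of Propositions~\ref{prop emb} and~\ref{prop mod} together with the descent statement from \cite{Viglivre}. The one point that needs a word of care is that Proposition~\ref{prop emb} is phrased for square integrable representations while we start from a supercuspidal one: for semi-simple $\bf G$ the center is finite, so ``square integrable'' and ``square integrable modulo center'' coincide, and supercuspidal representations are square integrable — this is the only compatibility to note. Everything else (injectivity of $V\to\rho^\Gamma_E$, noetherianity of $O_E$ used for finite generation of $L^K$, and the identity $P_E\rho^\Gamma_{O_E}\cap L=P_EL$) is already contained in the cited propositions, so the proof is short.

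\begin{proof}
Since $\bf G$ is semi-simple, an irreducible admissible supercuspidal $\mathbb C$-representation $\pi$ of $G$ descends to a number field $E$ \cite[II.4.9]{Viglivre}: there is an $E$-representation $V$ of $G$ and a $G$-equivariant isomorphism $\varphi\colon\mathbb C\otimes_E V\congto\pi$. As $\pi$ is irreducible, $V$ is an irreducible $E$-representation of $G$. As $\bf G$ is semi-simple its center is finite, so $\pi$ being supercuspidal it is square integrable; by Proposition~\ref{prop emb} there is a discrete cocompact subgroup $\Gamma$ of $G$ with $\Hom_G(\pi,\rho^\Gamma_{\mathbb C})\neq 0$. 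Since $\pi^\mathfrak B\neq 0$ (Fact~\ref{fact1}), Proposition~\ref{prop mod}(1) shows the natural map $\Hom_{EG}(V,\rho^\Gamma_E)\to\Hom_{\mathbb CG}(\pi,\rho^\Gamma_{\mathbb C})$ is an isomorphism, hence $\Hom_{EG}(V,\rho^\Gamma_E)\neq 0$. A nonzero $G$-map from the irreducible $V$ to $\rho^\Gamma_E$ is injective, so $V$ is (isomorphic to) an irreducible subrepresentation of $\rho^\Gamma_E$. By Proposition~\ref{prop mod}(3), $L:=V\cap\rho^\Gamma_{O_E}$ is an admissible $O_E$-integral structure of $V$ whose reduction modulo $P_E$ is contained in $\rho^\Gamma_k$. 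Transporting $L$ through $\varphi$ yields an admissible $O_E$-integral structure of $\pi$ with reduction modulo $P_E$ contained in $\rho^\Gamma_k$.
\end{proof}
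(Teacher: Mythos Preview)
Your approach is exactly the paper's: descend to a number field by \cite[II.4.9]{Viglivre}, embed in $\rho^\Gamma_{\mathbb C}$ via Proposition~\ref{prop emb}, transport the embedding to $E$ by Proposition~\ref{prop mod}(1), and take the intersection with $\rho^\Gamma_{O_E}$ by Proposition~\ref{prop mod}(3).

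There is one slip to fix. Your parenthetical ``Since $\pi^{\mathfrak B}\neq 0$ (Fact~\ref{fact1})'' is false: an irreducible supercuspidal complex representation has $\pi^{\mathfrak B}=0$, because by the Bernstein--Borel--Casselman equivalence (Fact~\ref{fact1}) any $\pi$ with $\pi^{\mathfrak B}\neq 0$ is a subquotient of an unramified principal series, hence not supercuspidal. Fact~\ref{fact1} does not assert what you cite it for.

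Fortunately the claim is also unnecessary. The blanket hypothesis $\pi^{\mathfrak B}\neq 0$ in Proposition~\ref{prop mod} is used only for the first half of part~1) and for part~2), where the functors $(-)^{\mathfrak B}$ and $\mathfrak T$ appear. The second assertion of part~1), namely that $\Hom_{EG}(V,\rho^\Gamma_E)\to\Hom_{\mathbb C G}(\pi,\rho^\Gamma_{\mathbb C})$ is an isomorphism, uses only that $V$ is a finitely generated $EG$-module (it is irreducible), as one sees from the proof via \eqref{eq:K}; and part~3) makes no reference to $\mathfrak B$ at all. So simply delete the sentence ``Since $\pi^{\mathfrak B}\neq 0$ (Fact~\ref{fact1})'' and invoke Proposition~\ref{prop mod}(1) directly for the Hom isomorphism; the rest of your argument stands.
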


   \section{Supercuspidality criterion}\label{S 5}

This section contains the proof of Proposition \ref{prop ss=sc}. We use the  description of the scalar extension of an irreducible admissible representation  $\pi$  of  $G$ (a theorem with Henniart \cite{HenV}).

 The commutant $D$ of $\pi$ is a division algebra of finite dimension over $C$. Let   $E$ denote the center of $D$, $E_s/C$  the maximal separable extension contained in $E/C$ and 
 $\delta$   the reduced degree of $D/E$. Let $C^{alg}$ be an algebraic closure  of  $C$  containing $E$ and  $\pi_{C^{alg}}$  the scalar extension of $\pi$ from $C$ to  $C^{alg}$.

\begin{fact}\label{fact HenV}   \cite{HenV} The length of $\pi_{C^{alg}}$ is $\delta [E:C]$ and 
  $$\pi_{C^{alg}} \simeq \oplus^\delta (\oplus_{i\in \Hom (E_s,C^{alg} )}V_i)$$ where  $V_i$ is indecomposable of commutant  $C^{alg}\otimes_{i,E_s}E$,  descends to a finite extension $C'$ of $C$,   has length  $[E:E_s]$ and its  irreducible subquotients are all isomorphic, say to $\rho_i$. The $\rho_i$ are admissible, of commutant  $C^{alg}$,  
 $\Aut_C(C^{alg})$-conjugate, not isomorphic each other, descend to a finite extension $C'/C$, and the descents seen as  $C$-representations of $G$,  are $\pi$-isotypic of  finite length. 
\end{fact}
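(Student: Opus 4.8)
The plan is to follow the Clifford-theory template for scalar extension of irreducible representations, using that $C^{alg}/C$ is a (possibly infinite) algebraic, normal extension with Galois group $\Aut_C(C^{alg})$ acting on everything in sight. First I would record the general principle: if $\pi$ is irreducible over $C$ with commutant $D$, then $\pi_{C^{alg}}$ is semisimple-isotypic up to the non-separability defect, and its isotypic structure is governed by $D\otimes_C C^{alg}$. Concretely, I would analyze $D\otimes_C C^{alg}$: writing $E$ for the center of $D$, $E_s/C$ for the maximal separable subextension, and $\delta$ for the reduced degree of $D/E$, one has $E\otimes_C C^{alg}\cong \prod_{i\in\Hom(E_s,C^{alg})} (C^{alg}\otimes_{i,E_s} E)$, where each factor $C^{alg}\otimes_{i,E_s}E$ is a local Artinian $C^{alg}$-algebra of length $[E:E_s]$ (purely inseparable base change), and then $D\otimes_C C^{alg}$ splits as a product of matrix algebras over these local rings, with the matrix size recording $\delta$. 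This purely algebraic computation is the backbone; it yields the decomposition $\pi_{C^{alg}}\simeq\bigoplus^{\delta}\big(\bigoplus_{i\in\Hom(E_s,C^{alg})}V_i\big)$ with $V_i$ indecomposable of commutant $C^{alg}\otimes_{i,E_s}E$ and length $[E:E_s]$, and total length $\delta\,[E:C]$ since $[E:C]=[E_s:C]\,[E:E_s]$.

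Next I would establish the properties of the $V_i$ and their constituents $\rho_i$. Admissibility of $\rho_i$ follows because $\pi$ is admissible and $C^{alg}/C$ is algebraic, so admissibility is preserved under scalar extension and passage to subquotients (each isotypic component of $\pi^K_{C^{alg}}$ is finite-dimensional over $C^{alg}$). The commutant of $\rho_i$ is $C^{alg}$: since $C^{alg}$ is algebraically closed, an irreducible admissible $C^{alg}$-representation is absolutely irreducible (Schur's lemma over an algebraically closed field, using finite-dimensionality of $K$-invariants), so $\End_G\rho_i=C^{alg}$. For the descent statement, I would invoke that an irreducible admissible representation over an algebraically closed field, whose isomorphism class is fixed by an open subgroup of $\Aut_C(C^{alg})$ (which holds because $\pi$ is defined over $C$ and only finitely many $V_i$ appear), descends to a finite extension $C'/C$; this is standard Galois descent for admissible representations (e.g. as in \cite[II.4]{Viglivre} type arguments). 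The $\Aut_C(C^{alg})$-action permutes the $\Hom(E_s,C^{alg})$ simply transitively on each $\delta$-block (since $E_s/C$ need not be Galois, one uses transitivity of the Galois group on embeddings of a separable extension), which both explains why the $\rho_i$ are $\Aut_C(C^{alg})$-conjugate and, combined with their distinct commutant-twists, why they are pairwise non-isomorphic.

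The last point is that the descent $\rho_i^{C'}$, viewed as a $C$-representation of $G$ by restriction of scalars, is $\pi$-isotypic of finite length. Here I would argue that $\rho_i$ is a subquotient of $\pi_{C^{alg}}$, hence (after descending to $C'$) the $C$-representation $\rho_i^{C'}$ is a subquotient of $\pi^{C'}=(\pi_{C'})|_C$, and $\pi_{C'}|_C$ is $\pi$-isotypic because $\pi$ is irreducible over $C$ and $C'/C$ is finite — more precisely, $\pi_{C'}$ is semisimple with all constituents $\Aut_C(C')$-conjugate to each other and lying over $\pi$, so restricting scalars back to $C$ gives a $\pi$-isotypic module; finiteness of length is immediate from $[C':C]<\infty$.

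The main obstacle I expect is the interplay between the inseparability defect $[E:E_s]$ and the decomposition: one must carefully track that $V_i$ is genuinely indecomposable but not semisimple when $E/E_s$ is nontrivial, which means the argument cannot simply quote semisimplicity of $\pi_{C^{alg}}$ and instead must run through the structure of $D\otimes_C C^{alg}$ as a not-necessarily-semisimple algebra. Handling this correctly — identifying the radical of $C^{alg}\otimes_{i,E_s}E$, showing it acts nontrivially on $V_i$, and matching the composition series with the $[E:E_s]$ isomorphic copies of $\rho_i$ — is the technically delicate part; everything else (admissibility, Schur, Galois descent, restriction of scalars) is routine once the algebra of $D\otimes_C C^{alg}$ is in hand. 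Since this is proved in the companion paper \cite{HenV}, I would in practice cite it for the precise statement and only sketch the above as motivation.
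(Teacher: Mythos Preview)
The paper gives no proof of this statement: it is recorded as a Fact with a bare citation to \cite{HenV}, and is then used as a black box in the proof of the supercuspidality criterion. Your proposal correctly recognizes this in its final paragraph, and your sketch of the argument (analyze $D\otimes_C C^{alg}$ via $E\otimes_C C^{alg}\cong\prod_{i\in\Hom(E_s,C^{alg})}C^{alg}\otimes_{i,E_s}E$, read off the indecomposable blocks and their lengths from the local Artinian structure of the purely inseparable factors, then handle admissibility, Schur, Galois descent, and restriction of scalars by standard arguments) is the right outline and is indeed how the result is obtained in \cite{HenV}. One small quibble: you write that $\Aut_C(C^{alg})$ permutes $\Hom(E_s,C^{alg})$ ``simply transitively on each $\delta$-block''; the action is transitive but not simply transitive in general (the stabilizer of an embedding $i$ is $\Aut_{i(E_s)}(C^{alg})$), though you immediately correct yourself by noting $E_s/C$ need not be Galois. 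Apart from that, your proposal is correct and matches the paper's treatment, which is simply to cite the companion paper.
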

It is straightforward to deduce, using that parabolic induction commutes with scalar extension from $C$ to $C^{alg}$, $\Aut_C(C^{alg})$,  descent and finite direct sums, the following compatibility between  supercuspidality and scalar extension from $C$ to $C^{alg}$:

\begin{proposition}\label{prop  HenV}$\pi$ is supercuspidal if and only if some irreducible subquotient $\rho$ of  $\pi_{C^{alg}} $ is supercuspidal if and only if any irreducible subquotient $\rho$ of  $\pi_{C^{alg}} $  is supercuspidal.
\end{proposition}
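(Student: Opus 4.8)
The plan is to deduce the statement from the structural description of $\pi_{C^{alg}}$ in Fact~\ref{fact HenV}, using only the exactness of scalar extension $C\to C^{alg}$ and the compatibility of parabolic induction with scalar extension, with restriction of scalars along a finite extension, with the action of $\Aut_C(C^{alg})$, and with finite direct sums. That the supercuspidality of all irreducible subquotients of $\pi_{C^{alg}}$ implies that of some irreducible subquotient is trivial, $\pi_{C^{alg}}$ being non-zero.

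To see that the supercuspidality of some irreducible subquotient $\rho$ of $\pi_{C^{alg}}$ forces that of $\pi$, I argue by contraposition. If $\pi$ is an irreducible subquotient of $\Ind_P^G\tau$ for a proper parabolic subgroup $P$ of $G$ with Levi $M$ and $\tau$ an irreducible admissible $C$-representation of $M$, then, scalar extension being exact and commuting with parabolic induction, $\pi_{C^{alg}}$ is a subquotient of $\Ind_P^G(\tau_{C^{alg}})$; by Fact~\ref{fact HenV} applied to $M$, $\tau_{C^{alg}}$ has finite length with irreducible admissible constituents over $C^{alg}$, so every irreducible subquotient of $\Ind_P^G(\tau_{C^{alg}})$ — hence every irreducible subquotient of $\pi_{C^{alg}}$ — is a subquotient of a parabolic induction and not supercuspidal.

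For the remaining implication I again contrapose: assuming every irreducible subquotient of $\pi_{C^{alg}}$ non-supercuspidal, I produce a proper parabolic induction over $C$ having $\pi$ as a subquotient. Fix an irreducible subquotient $\rho$; taking the Levi minimal, $\rho$ is a subquotient of $\Ind_P^G\sigma$ with $P$ a proper parabolic of $G$ with Levi $M$ and $\sigma$ a cuspidal irreducible admissible $C^{alg}$-representation of $M$, and $(M,\sigma)$ is then unique up to $G$-conjugacy. By Fact~\ref{fact HenV} the irreducible subquotients of $\pi_{C^{alg}}$ are exactly the $\Aut_C(C^{alg})$-conjugates of $\rho$ and they descend to a common finite extension $C'/C$; as $\Aut_{C'}(C^{alg})$ fixes $\rho$, it fixes the $G$-conjugacy class of $(M,\sigma)$, so the $\Aut_C(C^{alg})$-orbit of $\sigma$ is finite (contained in the $N_G(M)/M$-orbit of $\sigma$), the finite direct sum $T:=\bigoplus_g\sigma^g$ over that orbit is $\Aut_C(C^{alg})$-stable, and $T$ descends to a $C$-representation $T_0$ of $M$. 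Then $\Ind_P^G T_0$ is a $C$-representation of $G$ with $(\Ind_P^G T_0)_{C^{alg}}=\bigoplus_g\Ind_P^G(\sigma^g)$, and every $\Aut_C(C^{alg})$-conjugate of $\rho$ — in particular $\rho$ — is a subquotient of it; hence $\rho$ is an irreducible subquotient of $\xi_{C^{alg}}$ for some irreducible subquotient $\xi$ of $\Ind_P^G T_0$, and $\xi$, being a $C$-representation that is a subquotient of a proper parabolic induction, is not supercuspidal. Finally $\pi\simeq\xi$: fixing a model $\rho^\flat$ of $\rho$ over a sufficiently large finite $C'/C$, Fact~\ref{fact HenV} applied to $\pi$ and to $\xi$ makes the $C$-representation $\Res_{C'/C}\rho^\flat$ both $\pi$-isotypic and $\xi$-isotypic, while $(\Res_{C'/C}\rho^\flat)_{C^{alg}}$, which is $\bigoplus_\iota\rho^\iota$ over the $C$-embeddings $\iota\colon C'\hookrightarrow C^{alg}$, depends only on $\rho$; thus $\pi^{\oplus m}$ and $\xi^{\oplus n}$ have isomorphic scalar extensions to $C^{alg}$, so $\pi^{\oplus m}\simeq\xi^{\oplus n}$ by Noether--Deuring and $\pi\simeq\xi$ by Krull--Schmidt. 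Therefore $\pi$ is not supercuspidal.

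The step I expect to be the main obstacle is producing the \emph{rational} inducing datum $\Ind_P^G T_0$ for a subquotient of $\pi_{C^{alg}}$: all the rest is bookkeeping with exactness, the four compatibilities above, and Fact~\ref{fact HenV}, whereas this step relies on the finiteness of the $\Aut_C(C^{alg})$-orbit of a cuspidal support of $\rho$ — hence on the uniqueness of the cuspidal support over $C^{alg}$ — together with the comparison $\pi\simeq\xi$ through Noether--Deuring and Krull--Schmidt. The argument nowhere invokes the proposition itself over the auxiliary field $C'$, so there is no circularity.
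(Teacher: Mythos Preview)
Your argument that supercuspidality of some subquotient $\rho$ of $\pi_{C^{alg}}$ forces that of $\pi$ is correct and is essentially the paper's.

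The converse direction has a real gap. You claim that $T=\bigoplus_g\sigma^g$, the sum over the $\Aut_C(C^{alg})$-orbit of a cuspidal support $\sigma$ of $\rho$, descends to a $C$-representation $T_0$ of $M$ merely because its isomorphism class is $\Aut_C(C^{alg})$-stable. That is not enough: descent requires a \emph{descent datum} (a coherent system of isomorphisms $T\simeq T^g$), and the obstruction, unwound via Shapiro's lemma applied to $\Aut(T)\simeq\Ind_{\mathrm{Stab}(\sigma)}^{\Aut_C(C^{alg})}(C^{alg})^\times$, is precisely the Brauer class in $\operatorname{Br}(C'')$ obstructing the descent of $\sigma$ itself to the fixed field $C''$ of its stabiliser. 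Nothing you have written forces this class to vanish. (For the underlying phenomenon, recall that the $2$-dimensional irreducible complex representation of the quaternion group $Q_8$ has rational character, hence its isomorphism class is $\Aut_{\mathbb Q}(\overline{\mathbb Q})$-fixed, yet it does not descend to $\mathbb Q$.) A smaller slip: only the $\Aut_{C'}(C^{alg})$-orbit of $\sigma$ is constrained to lie in the $N_G(M)/M$-orbit, not the full $\Aut_C(C^{alg})$-orbit; finiteness of the latter still follows, of course, since $\Aut_{C'}(C^{alg})$ has finite index.

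The paper's route is shorter and sidesteps both this descent problem and the appeal to uniqueness of cuspidal support. Fact~\ref{fact HenV} already says that the irreducible constituents $\rho_i$ of $\pi_{C^{alg}}$ are all $\Aut_C(C^{alg})$-conjugate, so compatibility of parabolic induction with $\Aut_C(C^{alg})$ gives ``some $\rho_i$ supercuspidal'' $\Leftrightarrow$ ``every $\rho_i$ supercuspidal'' for free. For the link with $\pi$, rather than descending an \emph{inducing datum} on a Levi, the paper uses the other clause of Fact~\ref{fact HenV}: each $\rho_i$ descends to some $\rho'$ over a finite $C'/C$, and $\rho'$, \emph{viewed as a $C$-representation}, is $\pi$-isotypic of finite length. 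One then transports parabolic-induction relations along the tower $C\subset C'\subset C^{alg}$ by extension and restriction of scalars, which are honest functors with no cohomological obstruction. In particular your Noether--Deuring/Krull--Schmidt comparison $\pi\simeq\xi$ is unnecessary once one argues through $\rho'|_C$.
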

 
\begin{remark} \label{rem sq} {\rm When $C\subset \mathbb C$,    some irreducible subquotient $\rho$ of  $\pi_{\mathbb C} $ is square integrable modulo center, if and only 
 if any irreducible subquotient $\rho$ of  $\pi_{\mathbb C} $  is square integrable modulo center.} 
\end{remark}

The pro-$p$ Sylow subgroup  $\mathfrak U$ of the Iwahori subgroup $\mathfrak B$ of $G$ is a pro-$p$ Iwahori subgroup of $G$. The theory of  modules over the  pro-$p$ Iwahori Hecke ring $H(G, \mathfrak U)$ is similar to the theory of  the Iwahori Hecke ring $H(G, \mathfrak B)$ and we refer the reader to \cite{Vig1} and  to \cite{Vigss} for the  description of $H(G, \mathfrak U)$ and  of  the  supersingular right $H_C(G, \mathfrak U)$-modules.
It is easy to see that the $\mathfrak U$-invariant functor and supersingularity commute with scalar extension from $C$ to $C^{alg}$, $\Aut_C(C^{alg})$,  descent and finite direct sums. 

\medskip We prove Proposition \ref{prop ss=sc}, by descending  to $C$ the supercuspidal criterion proved over $C^{alg}$ with  Ollivier  in \cite[Thm.3]{OV}. 

Suppose that $\pi$ contains a non-zero $\mathfrak U$-invariant supersingular element. Some irreducible subquotient $\rho$  of  $\pi_{C^{alg}} $ does also.  By  the supercuspidality criterion over $C^{alg}$, $\rho$ is supercuspidal and  the $H_{C^{alg}}(G, \mathfrak U)$-module $\rho^\mathfrak U$ is supersingular.   A descent $\rho'$ of $\rho$ to a finite extension $C'/C$ is also supercuspidal and    $\rho'\,^{\mathfrak U}$    is also a supersingular $H_{C'}(G, \mathfrak U)$-module.  As $\rho'$ is $\pi$-isotypic as a $C$-representation,  $\rho'\,^{\mathfrak U}$ is  a direct  sum of $\pi^\mathfrak U$ as a $H_C(G,\mathfrak U)$-module. Hence  $\pi$ is  supercuspidal and  the $H_{C }(G, \mathfrak U)$-module $\pi^\mathfrak U$ is  supersingular.

Conversely, suppose that   $\pi$ is supercuspidal. By Prop.\ref{prop HenV},  an irreducible subquotient $\rho$  of  $\pi_{C^{alg}} $ is supercuspidal. By the supercuspidality criterion over $C^{alg}$,  the   $H_{C^{alg}}(G, \mathfrak U)$-module $\rho^{\mathfrak U}$ is  supersingular.  As above, we deduce that  the 
$H_C(G,\mathfrak U)$-module $\pi^\mathfrak U$ is supersingular. $\square$

  \section{Change of coefficient field} \label{S Cfinite}\label{S 8}

This section contains the proof  of Prop.\ref{prop Cfinite}, divided in three steps. Let   $F_c= \begin{cases} \mathbb Q \text{ if } c=0\\
\mathbb F_c \text{ if } c\neq 0
\end{cases}$ be the prime field of characteristic $c$ and $C/F_c$  a field  extension of algebraic closure $C^{alg}/F_c^{alg}$. 

 Step  1  shows that, if $c\neq p$ and  $G$ admits  an irreducible admissible  supercuspidal $C$-representation $\pi$, then $G$ admits one over a finite extension of  $F_c$.

 Indeed, twisting by a $C^{alg}$-character of $G $ we can suppose  that the values of the central character of $\pi$ belong to $F_c^{alg} $.  By \cite[II.4.9]{Viglivre}, when 
 $c\neq p$,   $\pi$   descends to a finite extension  $F_c'/F_c$. The descent preserves irreducibility, admissibility and  supercuspidality. A  descent of $\pi$ from $C^{alg}$ to $F_c'$ is an irreducible admissible  supercuspidal $F_c'$-representation of $G$.

\smallskip     Step 2 shows that if $G$ admits an  irreducible admissible  supercuspidal  representation    over a finite extension of  $F_c$ then   $G$ does over $F_c$.
 
 Indeed, let $C/C'$ be a finite field extension and $\pi$ and  irreducible admissible  $C$-representation of $G$. Then $\pi$ is  admissible and finitely generated
 as a $C'$-representation of $G$. This implies that $\pi$  contains an irreducible admissible $C'$-representation $\pi'$  (\cite[Lemma 7.10]{HV2}  for $c=p$ and the next lemma for $c\neq p$). By adjunction, $\pi$ is a quotient of the scalar extension $\pi'_C$ of $\pi'$ from $C'$ to $C$. We prove that if $\pi$ is  supercuspidal then $\pi'$ is  supercuspidal. Suppose that $\pi'$ is not supercuspidal,  subquotient of $\ind_P^G \tau'$ for a proper parabolic subgroup $P$ of $G$ and $\tau'$ an irreducible admissible $C'$-representation of a Levi subgroup $M$ of $P$. The parabolic induction is compatible with the the scalar extension  from $C'$ to $C$, hence  $\pi'_C$ is a  subquotient of $\ind_P^G \tau'_C$ and $\pi$ is also. The $C$-representation $\tau'_C$ of $M$  has finite length and its irreducible subquotients are admissible \cite[Cor.4]{HenV}. Hence $\pi$ is a subquotient of $\ind_P^G \rho$ for some irreducible admissible subquotient of  $\tau'_C$, therefore $\pi$ is not supercuspidal.

\smallskip Step 3 shows that if  $G$ admits an  irreducible admissible  supercuspidal  $F_c$-representation   then $G$ does over any field of characteristic $c$.

Indeed, let $L/C$ be a field extension of  algebraic closure
 $L^{alg}/C^{alg}$ and $\pi$ an  irreducible admissible  supercuspidal  $C$-representation of $G$.  Let  $\tau$ be an  irreducible subquotient of the scalar extension $\pi_L$ of $\pi$  from $C$ to $L$. It is admissible \cite[Cor.4]{HenV}.
 The scalar extension $\tau_{L^{alg
  }}$  of $\tau$ from $L$ to $L^{alg}$ is a subquotient  of  the scalar extension  $\pi_{L^{alg}}$ of $\pi$ from $C$ to $L^{alg}$, equal to the scalar extension of $\pi_L$ from $L$ to $L^{alg}$.
   By \S\ref{S 5},  $\pi_{L^{alg}}$ has finite length, the  irreducible subquotients of $\pi_{L^{alg}}$ are absolutely irreducible, admissible,  supercuspidal and descend to a finite extension  $C'/C$, and the descents are $\pi$-isotypic as $C$-representations of $G$. 
  Therefore $\tau_{L^{alg
  }}$ has the same properties. We deduce that   the extension $\tau_{L'}$ of $\tau$ from $L$ to a finite extension $L'$ of $L$  has finite length, the  irreducible subquotients of $\tau'$ are absolutely irreducible,  admissible, supercuspidal and $\tau$-isotypic as $L$-representations of $G$. This implies that $\tau$ is   supercuspidal.
 
  To end the proof of Prop.\ref{prop Cfinite}, it remains to prove the lemma announced in Step 2:

  \begin{lemma} Assume $c\neq p$. An admissible   finitely generated $C$-representation $\pi$ of $G$ has finite length.
\end{lemma}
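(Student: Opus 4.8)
**Proof plan for the lemma (Assume $c \neq p$. An admissible finitely generated $C$-representation $\pi$ of $G$ has finite length).**

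The plan is to reduce to the case $C = C^{alg}$ algebraically closed, where the statement is part of the standard Bernstein-theoretic structure of $\Mod_C(G)$ when $c \neq p$. First I would observe that $\pi$ is admissible and finitely generated, hence (being finitely generated over the ring $C$, which is a subring of $C^{alg}$) the scalar extension $\pi_{C^{alg}}$ is an admissible, finitely generated $C^{alg}$-representation of $G$. By Fact~\ref{fact HenV} / \S\ref{S 5}, $\pi$ has finite length over $C$ if and only if $\pi_{C^{alg}}$ has finite length over $C^{alg}$: the length of $\pi_{C^{alg}}$ controls the length of $\pi$ up to the finite factor $\delta[E:C]$, and more simply, a strictly increasing chain of $CG$-submodules of $\pi$ yields, after applying the exact functor $C^{alg} \otimes_C (-)$, a strictly increasing chain of $C^{alg}G$-submodules of $\pi_{C^{alg}}$. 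So it suffices to treat $C = C^{alg}$.

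For $C$ algebraically closed of characteristic $c \neq p$, I would invoke the second adjointness / Bernstein decomposition theory of smooth representations of $p$-adic groups, which is valid with coefficients in any algebraically closed field of characteristic different from $p$ (Bernstein's theory, extended to this generality in \cite{Viglivre}, e.g. the Noetherian property of $\Mod_C(G)$ and finiteness of the number of supercuspidal supports bounded by an admissible representation). Concretely: an admissible $\pi$ has the property that its supercuspidal support is a finite set of inertial equivalence classes, and within a fixed Bernstein block the admissible finitely generated representations form a Noetherian and Artinian category — equivalently, an admissible representation in a single block is finitely generated over the (Noetherian, finite-over-center) Bernstein algebra, hence of finite length. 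Summing over the finitely many blocks in which $\pi$ lives gives finite length. Alternatively, and perhaps cleaner, one bounds the length of $\pi$ directly: a finitely generated admissible $\pi$ is generated by its $K$-fixed vectors for some open compact $K$, so it is a quotient of finitely many copies of $C[K\backslash G]$, and any irreducible subquotient is then a subquotient of $\ind_P^G \sigma$ for a supercuspidal $\sigma$ with bounded ``size''; Jacquet-module / Frobenius-reciprocity arguments show $\dim_C \pi^K$ bounds the length, so finite length follows from admissibility.

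The main obstacle is purely bibliographic rather than mathematical: one must cite the correct form of Bernstein's Noetherian/finite-length results valid for coefficient fields of characteristic $c \neq p$ that are not necessarily $\mathbb C$ (the relevant statements are in \cite[Ch.~II and IV]{Viglivre}), and be careful that admissibility is preserved under the scalar extension and descent operations used — but this last point is exactly Fact~\ref{fact HenV} and \cite[Cor.~4]{HenV}, already available. I do not expect any genuinely new argument is needed: the content is that, away from characteristic $p$, ``admissible + finitely generated $\Rightarrow$ finite length'' is a standard consequence of the Noetherianity of the category $\Mod_C(G)$ together with the fact that an admissible representation meets only finitely many Bernstein components, each of which is a Noetherian and Artinian subcategory on finitely generated admissible objects.
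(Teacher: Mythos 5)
Your proposal is correct and takes essentially the same approach as the paper: extend scalars to $C^{alg}$, note that admissibility and finite generation are preserved and that a strictly increasing chain of submodules stays strictly increasing after faithfully flat base change, and then cite the known finite-length result for algebraically closed coefficient fields of characteristic $c\neq p$ (the paper's reference is \cite[II.5.1]{Viglivre}, which is exactly the Bernstein-theoretic statement you sketch).
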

\begin{proof} 
The extension $\pi_{C^{alg}}$  of $\pi$ from $C$ to  $C^{alg}$  is also admissible  finitely generated. The lemma  is known when $C$ is algebraically closed \cite[II.5.1]{Viglivre} hence $\pi_{C^{alg}}$ has finite length, implying that $\pi$ has finite length.
\end{proof}

\section{Reduction to an absolutely simple adjoint group }\label{S 2}

As well known, the adjoint group  $\bf G^{ad}$  of $\bf G$ 
 is $F$-isomorphic   to  a  finite product of  reductive connected $F$-groups \begin{equation}\label{eq Gad}
 {\bf G^{ad} }\simeq  {\bf H} \times \prod_i R_{F'_i/F}( {\bf G'_i})
 \end{equation} 
 where  
  $H:={\bf H}(F)$ is compact (hence any  smooth irreducible $C$-representation of $H$ is admissible and supercuspidal), the $F'_i/F$  are  finite separable extensions   
and $R_{F'_i/F}( {\bf G'_i})$ are scalar restrictions from $F'_i$ to $F$  of   absolutely simple adjoint connected    $F'_i$-groups $\bf G'_i $,
 
\begin{proposition} \label{prop redadjoint}  Assume that the field $C$ is  algebraically closed  or   finite. If for any $i$, $G'_i:={\bf G'_i}(F')$ admits  an irreducible admissible supercuspidal $C$-representation, then $G$ 
admits  an irreducible admissible supercuspidal $C$-representation.

\end{proposition}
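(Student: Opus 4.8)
The plan is to build the representation of $G$ from a supercuspidal representation of the adjoint group, via the decomposition \eqref{eq Gad}. Taking $F$-points and recalling that for a finite separable extension $F'/F$ one has $R_{F'/F}({\bf G'})(F)={\bf G'}(F')$, with the parabolic $F$-subgroups of $R_{F'/F}({\bf G'})$ being exactly the Weil restrictions of the parabolic $F'$-subgroups of ${\bf G'}$ (and matching Levi decompositions), one obtains a topological isomorphism ${\bf G}^{ad}(F)\cong H\times\prod_i G'_i$ under which supercuspidality on the factor $R_{F'_i/F}({\bf G'_i})(F)$ is exactly supercuspidality on ${\bf G'_i}(F'_i)=G'_i$. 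First I would produce an irreducible admissible supercuspidal $C$-representation of ${\bf G}^{ad}(F)$: the compact group $H$ carries the trivial representation, which is irreducible, admissible, and --- since ${\bf H}$ has no proper parabolic $F$-subgroup --- vacuously supercuspidal; each $G'_i$ carries one by hypothesis; and the external tensor product of these is again irreducible, admissible, and supercuspidal by the product proposition (Prop.~\ref{prop GtimesH}), applied inductively (the parabolic $F$-subgroups of a direct product being the direct products of parabolic $F$-subgroups of the factors, so a proper one has a proper factor and the relevant Jacquet modules still vanish). This is the step that uses the hypothesis that $C$ is algebraically closed or finite; over a finite $C$, should the outer tensor product fail to be irreducible, one replaces it by an irreducible admissible constituent --- still supercuspidal --- or passes to $C^{alg}$ and descends, and Prop.~\ref{prop GtimesH} subsumes this.

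It then remains to transfer a supercuspidal representation $\sigma$ of ${\bf G}^{ad}(F)$ down to $G$ through the central isogeny $i^{ad}\colon{\bf G}\to{\bf G}^{ad}$. I would use that its kernel on $F$-points is $Z:={\bf Z(G)}(F)=\ker\bigl(i^{ad}\colon G\to{\bf G}^{ad}(F)\bigr)$, which is central in $G$, and that its image $G_0:=i^{ad}(G)$ is an open normal subgroup of ${\bf G}^{ad}(F)$ of finite index, because ${\bf G}^{ad}(F)/G_0$ embeds as a group into $H^1(F,{\bf Z(G)})$, which is finite as ${\bf Z(G)}$ is of multiplicative type and $F$ is local. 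Restricting $\sigma$ to $G_0$, it stays admissible and is finitely generated over $G_0$ (finite index), hence contains an irreducible admissible $C$-subrepresentation $\tau$; and $\tau$ is supercuspidal because the unipotent radical ${\bf N}(F)$ of every proper parabolic ${\bf P}={\bf M}{\bf N}$ of ${\bf G}^{ad}$ lifts into $G_0$ (unipotent radicals being unchanged by central isogenies), so parabolic induction and Jacquet functors for $G_0$ are compatible with those for ${\bf G}^{ad}(F)$ and the constituents of $\sigma|_{G_0}$ remain supercuspidal by a Clifford-type argument. I would then inflate $\tau$ along $G\twoheadrightarrow G/Z=G_0$: this preserves irreducibility; it preserves admissibility because $i^{ad}$ sends open compact subgroups of $G$ to open compact subgroups of $G_0$; and it preserves supercuspidality because every Levi subgroup of $G$ contains $Z$ and parabolic induction for $G$ from a representation trivial on $Z$ is the inflation of the corresponding parabolic induction for $G_0$ (the central-character bookkeeping being handled via Schur's lemma for smooth representations). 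The inflation of $\tau$ to $G$ is the desired irreducible admissible supercuspidal $C$-representation.

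The hard part will be this last transfer when $c=p$: there the index $[{\bf G}^{ad}(F):G_0]$ may be divisible by $p$, so $\sigma|_{G_0}$ need not be semisimple and one cannot invoke classical Clifford theory. Two modular inputs are needed: that an admissible finitely generated smooth $C$-representation contains an irreducible admissible subrepresentation --- available from the finiteness results of the paper, e.g.\ \cite[Lemma~7.10]{HV2} --- and that the constituents of the restriction of a supercuspidal representation to a finite-index normal subgroup are supercuspidal, which rests on the characterization of supercuspidality in characteristic $p$ (Prop.~\ref{prop ss=sc}, \cite{HenV}) together with the vanishing of the relevant Jacquet modules. For $c=0$ these points reduce to classical Clifford theory, and for $c\neq p$ one could alternatively reduce at the outset, by Prop.~\ref{prop Cfinite}, to $C$ algebraically closed, where the product step and the central-character bookkeeping are immediate.
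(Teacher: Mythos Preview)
Your overall architecture is exactly the paper's: scalar restriction (your opening paragraph is Prop.~\ref{prop RF'toF}), finite product (Props.~\ref{prop GtimesH} and \ref{prop GHfinite}), then the central extension step (Prop.~\ref{prop HtoG}). The first two steps are fine as you wrote them.

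The gap is in the central extension step, specifically the claim that an irreducible constituent $\tau$ of $\sigma|_{G_0}$ is supercuspidal when $c=p$. Your argument is via Jacquet modules: the unipotent radicals lift to $G_0$, so Jacquet functors match, so vanishing transfers. But in characteristic $p$ supercuspidality is \emph{not} characterized by vanishing of Jacquet modules; it means ``not a subquotient of a proper parabolic induction'', which is strictly stronger. Your last paragraph acknowledges the difficulty but the proposed fix via Prop.~\ref{prop ss=sc} ``together with the vanishing of the relevant Jacquet modules'' does not repair this: the supersingularity criterion lives on the pro-$p$ Iwahori Hecke side and you would have to compare those algebras for $G$ and $G^{ad}$, which is a different (and nontrivial) route.

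The paper handles this by proving the contrapositive directly at the level of parabolic induction (Prop.~\ref{prop HG}). First one has the compatibility $(\ind_Q^{G^{ad}}\sigma')\circ i \simeq \ind_P^G(\sigma'\circ i)$, which you essentially state. Then if some $\tilde\rho$ is a subquotient of $\ind_P^G\tau'$ with $\tau'$ trivial on $Z(G)$, one pushes $\tau'$ up to an irreducible admissible $\sigma'$ of the Levi $L$ of $G^{ad}$, and the key computation is
\[
\ind_{i(G)}^{G^{ad}}\bigl((\ind_Q^{G^{ad}}\sigma')|_{i(G)}\bigr)\ \simeq\ \ind_Q^{G^{ad}}\bigl(\sigma'\otimes_C C[i(M)\backslash L]\bigr),
\]
together with the fact (Lemma~\ref{lem te}) that $\sigma'\otimes_C C[i(M)\backslash L]$ has finite length with admissible irreducible subquotients. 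This exhibits $\sigma$ as a subquotient of a proper parabolic induction. No Jacquet-module vanishing is invoked.

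One smaller point: your justification ``$H^1(F,{\bf Z(G)})$ is finite as ${\bf Z(G)}$ is of multiplicative type and $F$ is local'' fails when $\charf F=p$ (already $H^1(F,\mu_p)=F^\times/(F^\times)^p$ is infinite). The paper explicitly assumes $a=0$ for this step; you should too.
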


For $C=\mathbb C$ and   the
 finite group analogue  $\mathbf{H}(k)$ of $G$, where $\bf H$ is a connected reductive  group over a finite field $k$ of characteristic $p$,  this is proved in \cite[Proof of Prop. 2.1]{Kret}. 
 
\medskip  The proposition is the combination of the  Propositions \ref{prop GtimesH}, \ref{prop GHfinite}, \ref{prop HtoG}, \ref{prop RF'toF}  corresponding to the   operations: 
 finite product,  central extension, scalar restriction, with $C$ algebraically closed or finite.  
  
\medskip  1) {\it Finite product}
   Let ${\mathbf   G_1}, {\mathbf   G_2}$ be two connected  reductive $F$-groups and  $\sigma, \tau $  irreducible admissible $C$-representations  of $G_1:={\mathbf   G_1}(F) ,G_2:={\mathbf   G_2}(F)$.
 
 \begin{proposition} \label{prop GtimesH} Assume that $C$ is algebraically closed.  
  
a)  The tensor product  $ \sigma \otimes_C\tau$  is an irreducible admissible $C$-representation of $G_1\times G_2$.
 
b) Each irreducible admissible $C$-representation of $G_1\times G_2$ is of this form.

c)  $ \sigma \otimes_C\tau$ determines $\sigma,\tau $ (modulo isomorphism).

d)  $ \sigma \otimes_C\tau$   is supercuspidal  if and only if   $\sigma$ and $\tau$ are supercuspidal.   \end{proposition}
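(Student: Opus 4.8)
\textbf{Proof proposal for Proposition \ref{prop GtimesH}.}

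The plan is to treat the four assertions in sequence, reducing everything to the case of smooth representations of a product of locally profinite groups and exploiting that $C$ is algebraically closed. For part a), I would first note that $\sigma\otimes_C\tau$ is admissible: given open compact $K_1\subset G_1$ and $K_2\subset G_2$, one has $(\sigma\otimes_C\tau)^{K_1\times K_2}=\sigma^{K_1}\otimes_C\tau^{K_2}$, which is finite-dimensional since each factor is. For irreducibility, the standard approach is to pick a small enough open compact $K_1$ so that $\sigma^{K_1}\neq 0$, and use that $\sigma^{K_1}$ is a simple module over the Hecke algebra $\HH_C(G_1,K_1)$, which is \emph{absolutely} simple because $C$ is algebraically closed and $\sigma^{K_1}$ is finite-dimensional (so its endomorphism algebra is $C$ by Schur). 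Then $(\sigma\otimes\tau)^{K_1\times K_2}$ is the external tensor product of absolutely simple modules over $\HH_C(G_1,K_1)\otimes_C\HH_C(G_2,K_2)\cong\HH_C(G_1\times G_2,K_1\times K_2)$, hence simple; letting $K_1,K_2$ shrink and using that a nonzero submodule must have nonzero $K_1\times K_2$-invariants for some level, one concludes $\sigma\otimes_C\tau$ is irreducible.

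For part b), given an irreducible admissible $C$-representation $\pi$ of $G_1\times G_2$, I would restrict to $G_1\times\{1\}$: pick an irreducible $G_1$-subquotient (or use a nonzero $K_1\times K_2$-isotypic piece) and let $\sigma$ be an irreducible $G_1$-constituent appearing in $\pi$; then $\Hom_{G_1}(\sigma,\pi)$ is naturally a $C$-representation of $G_2$ (since the two factors commute), which one checks is admissible and irreducible — call it $\tau$ — and the evaluation map $\sigma\otimes_C\Hom_{G_1}(\sigma,\pi)\to\pi$ is a nonzero $G_1\times G_2$-map, hence an isomorphism by irreducibility of source (part a) and target. Part c) is then immediate: from $\sigma\otimes_C\tau$ one recovers $\sigma$ as any irreducible $G_1$-constituent and $\tau\cong\Hom_{G_1}(\sigma,\sigma\otimes_C\tau)$, both determined up to isomorphism; over an algebraically closed field there is no ambiguity from twisting by characters of the commutant since the commutant is $C$.

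For part d), the key input is that parabolic induction for $G_1\times G_2$ decomposes as a product: a parabolic $\mathbf{P}\subset\mathbf{G_1\times G_2}$ is $\mathbf{P_1\times P_2}$ with Levi $\mathbf{M_1\times M_2}$, and $\ind_{P_1\times P_2}^{G_1\times G_2}(\tau_1\otimes_C\tau_2)\cong(\ind_{P_1}^{G_1}\tau_1)\otimes_C(\ind_{P_2}^{G_2}\tau_2)$. If $\sigma$ or $\tau$ is non-supercuspidal, say $\sigma$ is a subquotient of $\ind_{P_1}^{G_1}\tau_1$ with $P_1$ proper, then $\sigma\otimes_C\tau$ is a subquotient of $\ind_{P_1\times G_2}^{G_1\times G_2}(\tau_1\otimes_C\tau)$ with $P_1\times G_2$ proper (using that $\tau_1\otimes_C\tau$ has an irreducible admissible subquotient by part b) applied to $M_1\times G_2$, or directly by the finite-length statement \cite[Cor.4]{HenV}), so $\sigma\otimes_C\tau$ is not supercuspidal. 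Conversely, if $\sigma\otimes_C\tau$ is a subquotient of $\ind_{P_1\times P_2}^{G_1\times G_2}(\tau_1\otimes_C\tau_2)\cong(\ind_{P_1}^{G_1}\tau_1)\otimes_C(\ind_{P_2}^{G_2}\tau_2)$ with $P_1\times P_2$ proper — so at least one of $P_1,P_2$ is proper, say $P_1$ — then decomposing into irreducible constituents via part b) and matching $G_1$-constituents using the Jordan–Hölder factors, $\sigma$ appears as a subquotient of $\ind_{P_1}^{G_1}$ of an irreducible admissible subquotient of $\tau_1$, forcing $\sigma$ non-supercuspidal. The main obstacle is the bookkeeping in this last direction: one must argue that the irreducible constituents of a tensor product $A\otimes_C B$ of finite-length admissible representations are exactly the tensor products of constituents of $A$ with constituents of $B$, which again rests on the absolute-simplicity/Schur input from $C$ being algebraically closed and on admissibility ensuring finite length at each level; once that is in hand, the equivalence in d) follows by comparing constituents.
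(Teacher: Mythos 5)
Your plan is close in spirit to the paper's proof, and parts c) and d) are essentially the same argument the paper uses. There are, however, two concrete weak points, one of which is a genuine gap for the relevant case $c=p$.

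In a), to prove irreducibility of $\sigma\otimes_C\tau$ you pass to Hecke modules and assert that $\sigma^{K_1}$ is a simple $\HH_C(G_1,K_1)$-module because $\sigma$ is irreducible. When $c\neq p$ this is fine, but when $c=p$ the invariant functor $\sigma\mapsto\sigma^{K_1}$ is neither exact nor fully faithful, and $\sigma^{K_1}$ need not be simple over the Hecke algebra even when $\sigma$ is irreducible and $K_1$ is a pro-$p$ subgroup. So the sentence ``$\sigma^{K_1}$ is a simple module over the Hecke algebra'' requires a justification that does not exist in general, and the whole ``external tensor product of absolutely simple Hecke modules'' step collapses with it. The paper avoids this reduction entirely: it applies Schur directly at the level of representations ($\End_{G_1}(\sigma)=C$ since $\sigma$ is irreducible admissible and $C$ is algebraically closed, via \cite{HenV}), and then invokes Bourbaki \cite[\S 12, no.~2, Cor.~1]{BkiA8}, which says a tensor product of two simple modules is simple provided one of them has commutant $C$. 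That is the clean argument, and it works uniformly in $c$.

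In b), your construction ``pick an irreducible $G_1$-constituent $\sigma$ of $\pi$ and set $\tau=\Hom_{G_1}(\sigma,\pi)$'' needs $\sigma$ to be a genuine subrepresentation (not merely a Jordan--H\"older constituent), otherwise $\Hom_{G_1}(\sigma,\pi)$ could vanish, and for $c=p$ the existence of an irreducible \emph{admissible} $G_1$-subrepresentation of $\pi$ is not automatic. The paper handles this carefully: it considers the $G_1$-subrepresentation generated by $\pi^{K_2}$, observes that it is admissible because $\pi^{K_1\times K_2}$ is finite dimensional for every $K_1$, and only then cites \cite[Lemma~7.10]{HV2} to extract an irreducible admissible subrepresentation $\sigma$. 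Your proof should make this step explicit rather than gesture at ``a nonzero $K_1\times K_2$-isotypic piece.'' Once these two points are repaired, your parts c) and d) (factorization of parabolics and of their inductions, and the description of the irreducible subquotients of an external tensor product of finite-length admissible representations) do line up with the paper's argument.
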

 
\begin{proof}   $\sigma\otimes \tau$  is admissible as, for  open compact  subgroups $K_i$ of $G_i$, we have a  
  natural isomorphism \cite[\S 12, 2 Lemme 1]{BkiA8}:
 $$ \Hom_{K_1 }(1 , \sigma )\otimes  \Hom_{ K_2}(1 ,  \tau)\to  \Hom_{K_1\times K_2}(1  \times 1, \sigma \otimes \tau).$$
  Suppose now $C$ algebraically closed. 
 
 a) It is known that  $\sigma\otimes_C \tau$ is irreducible \cite[\S 12, 2 Cor.1]{BkiA8} (the commutant of $\sigma$ is $C$  \cite{HenV}).

b) Let  $\pi$ be an  irreducible admissible $C$-representation of $G_1\times G_2$ and let $K_1,K_2$ be any compact open subgroups of $G_1, G_2$ such that $\pi^{K_1\times K_2}\neq 0$.

When $c=p$, the $C$-representation of $G_1$ generated by $\pi^{K_2}$ is admissible  as $\pi^{K_1\times K_2}$ is finite dimensional over $C$. It contains an irreducible admissible $C$-subrepresentation $\sigma$  \cite[Lemma 7.10]{HV2}. Let $\tau:= \Hom_{G_1}(\sigma, \pi)$ with the natural action of $G_2$. The representation $\sigma \otimes_C\tau$ embeds naturally in $\pi$; as $\pi$ is irreducible, it is isomorphic to $\sigma \otimes_C\tau$, and $\tau$ is irreducible. As $\pi$ is admissible, $\tau$ is admissible \footnote{The proof is due to Henniart}.

When $c\neq p$,   $\pi^{K_1\times K_2}$ is a  simple right $H_C(G_1\times G_2, K_1\times K_2)$-module   \cite[I.6.3]{Viglivre}. We have 
$  H_C(G_1\times G_2, K_1\times K_2)\simeq H_C(G_1, K_1)\otimes_C H_C(G_2,K_2)$. By \cite[\S 1 Proposition 2]{BkiA8},  the finite dimensional simple $H_C(G_1, K_1)\otimes H_C(G_2,K_2)$-modules are factorizable  (we can also imitate with the Hecke algebras  the argument above for $c=p$) hence  $\pi^{K_1\times K_2}\simeq \sigma^{K_1} \otimes \tau^{K_2}$  for irreducible admissible $C$-representations 
$\sigma, \tau$ of $G_1,G_2$, and 
 $\pi \simeq \sigma \otimes_C \tau$  \cite[I.6.3]{Viglivre}.

c) As a $C$-representation of $G_1$,  $\sigma\otimes_C\tau$ is $\sigma$-isotypic.
As a representation of   $G_2$, $\sigma\otimes_C\tau$ is $\tau$-isotypic. Hence c). 

d) The parabolic subgroups of $G_1\times G_2$ are product of parabolic subgroups of $G_1$ and of $G_2$. Let  $P,  Q$ be  parabolic subgroups of $G_1,G_2$  of Levi subgroups  $M, L$ and let  $\pi_1$ be an irreducible admissible $C$-representation  of $M\times L$. By b), $\pi_1$ is factorizable, $\pi_1=\sigma_1 \otimes_C \tau_1$ for irreducible admissible $C$-representations $\sigma_1, \tau_1$ of $M,L$; we have a  natural isomorphism $\ind_P^{G_1}\sigma_1 \otimes_C \ind_Q^{G_2}\tau_1\to \ind_{P\times Q}^{G_1\times G_2}\pi_1$. The   irreducible subquotients of $\ind_{P\times Q}^{G_1\times G_2}\pi_1 $ are the tensor products of 
 the irreducible subquotients of $\ind_P ^{G_1} \sigma_1$ and of $\ind_Q^{G_2} \tau_1$. Hence d).
  \end{proof}

Until the end of 1), we assume that $C$ is a finite field.     The next proposition is deduced from \cite[Thm.1]{HenV}.
  \begin{proposition}\label{prop GHfin} 
Let
  $\pi $ be an irreducible admissible $C$-representation  of $G$. The   commutant  $D$ of $\pi$ is   a finite extension of $C$ and the 
  scalar extension $\pi_D$ of $\pi$ from $C$ to  $D$  is  isomorphic to
   $$\pi_D\simeq \oplus _{i\in \Gal(D/C)}\pi_i$$
for   irreducible admissible $ D$-representations $\pi_i$ of $G$ of commutant $D$, not isomorphic to each other,  forming a single  $\Gal(D/R)$-orbit and seen as $C$-representations, are isomorphic to $\pi$.
 \end{proposition}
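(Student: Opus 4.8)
The plan is to read off the statement from Fact~\ref{fact HenV} once one exploits the extra structure forced by $C$ being finite. First I would note that $D=\End_{CG}(\pi)$, being a finite division algebra, is a field by Wedderburn's little theorem; as a finite extension of the finite field $C$ it is Galois over $C$ with cyclic group $\Gal(D/C)$, and in particular $D/C$ is separable. Hence, in the notation of Fact~\ref{fact HenV}, $E=E_s=D$, $\delta=1$ and $[E:E_s]=1$, so that fact reduces to
\[
\pi_{C^{alg}}\;\simeq\;\bigoplus_{i\in\Hom_C(D,C^{alg})}\rho_i,
\]
where the $\rho_i$ are absolutely irreducible, admissible, pairwise non-isomorphic and form a single $\Aut_C(C^{alg})$-orbit; in particular $\pi_{C^{alg}}$ is a semisimple $C^{alg}G$-module of length $[D:C]$.

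Next I would analyse $\pi_D:=D\otimes_C\pi$ directly. It is a cyclic $DG$-module (generated by $1\otimes v$ for any $C G$-generator $v$ of $\pi$), so \eqref{eq:K}, applied to the finite extension $D/C$, gives
\[
\End_{DG}(\pi_D)\;\simeq\;D\otimes_C\End_{CG}(\pi)\;=\;D\otimes_C D\;\simeq\;\prod_{\sigma\in\Gal(D/C)}D,
\]
the last isomorphism because $D/C$ is separable and Galois. The primitive idempotents of this ring decompose $\pi_D=\bigoplus_{\sigma\in\Gal(D/C)}\pi_\sigma$, where each $\pi_\sigma$ is an indecomposable $DG$-module with $\End_{DG}(\pi_\sigma)=D$, and the $\pi_\sigma$ are pairwise non-isomorphic.

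To upgrade ``indecomposable'' to ``irreducible'' I would show $\pi_D$ is semisimple over $DG$: for a $DG$-submodule $W\subseteq\pi_D$, the identity $\id_W$ lies in the image of the restriction map $\Hom_{DG}(\pi_D,W)\to\Hom_{DG}(W,W)$, because it does so after $-\otimes_D C^{alg}$ — there $\Hom_{C^{alg}G}(\pi_{C^{alg}},W_{C^{alg}})=\Hom_{DG}(\pi_D,W)\otimes_D C^{alg}$ by \eqref{eq:K} (as $\pi_D$ is finitely generated over $DG$), and a splitting exists over $C^{alg}$ since $\pi_{C^{alg}}$ is semisimple — and the image of a $D$-linear map does not shrink upon base change; so $W$ is a $DG$-direct summand. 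Thus each $\pi_\sigma$ is irreducible with commutant $D$, and it is admissible since $(\pi_D)^K=D\otimes_C\pi^K$ is finite-dimensional over $D$ for every open compact $K$. Finally, $\Gal(D/C)$ acts on $\pi_D=D\otimes_C\pi$ through the left tensor factor, $C$-linearly and commuting with $G$; under $D\otimes_C D\simeq\prod_\sigma D$ it permutes the primitive idempotents transitively, hence permutes the $\pi_\sigma$ transitively, so they form a single $\Gal(D/C)$-orbit (which we use to reindex them by $\Gal(D/C)$) and each $\tau\in\Gal(D/C)$ restricts to a $CG$-isomorphism $\pi_\sigma\congto\pi_{\tau\sigma}$. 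Consequently all the $\pi_\sigma$, seen as $C$-representations, have the same dimension; being subrepresentations of the $\pi$-isotypic module $\pi_D\simeq\pi^{\oplus[D:C]}$ they are $\pi$-isotypic, and since there are $[D:C]$ of them with $\sum_\sigma\dim_C\pi_\sigma=[D:C]\dim_C\pi$, each $\pi_\sigma$ is isomorphic to $\pi$ as a $CG$-module. This is exactly the asserted decomposition.

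I expect the only genuinely non-formal step — hence the main obstacle — to be the semisimplicity of $\pi_D$ over $DG$, i.e. upgrading the $\pi_\sigma$ from indecomposable to irreducible: one cannot simply average a $CG$-splitting over $\Gal(D/C)$, since $c$ may divide $[D:C]$, so one must feed in the separability of $D/C$ or, as above, route through the semisimplicity of $\pi_{C^{alg}}$ supplied by Fact~\ref{fact HenV} together with the descent of splittings via \eqref{eq:K}. Everything else is bookkeeping with idempotents and the semilinear Galois action.
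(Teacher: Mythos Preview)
Your argument is correct. The paper's own proof is essentially a one-line citation: after noting that $D$ is a finite (hence commutative, Galois) field extension of $C$ and that $\pi_D$ is $\pi$-isotypic of length $[D:C]$ as a $C$-representation, it simply applies \cite[Thm.~1]{HenV} with $R'=D$, which already delivers the full decomposition of $\pi_D$ with all the stated properties.

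Your route is different: you take as input only the $C^{alg}$-statement (Fact~\ref{fact HenV}) and rebuild the $D$-level decomposition by hand, computing $\End_{DG}(\pi_D)\simeq D\otimes_C D\simeq\prod_{\sigma}D$ via \eqref{eq:K}, using the primitive idempotents to split $\pi_D$, and then descending semisimplicity from $C^{alg}$ to $D$ to upgrade the summands from indecomposable to irreducible. This is a genuine re-derivation of the special case of \cite[Thm.~1]{HenV} needed here. Two small remarks: (i) your semisimplicity descent only needs injectivity of \eqref{eq:K} on the codomain $\Hom_{DG}(W,W)\otimes_D C^{alg}\hookrightarrow\Hom_{C^{alg}G}(W_{C^{alg}},W_{C^{alg}})$, which always holds, so the argument is sound as written; (ii) a shorter alternative is to note that $\pi_D$ has length $\le [D:C]$ (since $(\pi_D)_{C^{alg}}=\pi_{C^{alg}}$ has length $[D:C]$ and length does not decrease under field extension), while the idempotent decomposition already gives $[D:C]$ nonzero summands, forcing each $\pi_\sigma$ to be simple. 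What your approach buys is self-containment relative to the paper's internal references; what the paper's citation buys is brevity.
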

  \begin{proof} The commutant $D$ of $\pi$ is a  division algebra of finite dimension over $C$, and a finite extension of $C$ is a  finite field and is Galois.  As a $C$-representation, $\pi_D$ is $\pi$-isotypique of length $[D:C]$.  Apply  \cite[Thm.1]{HenV} with  $R'=D$.
  \end{proof}  
    
Returning to $\sigma$ and $\tau$ when $C$ is finite,    the commutants $D_\sigma$ and $D_\tau$ of $\sigma$ and $\tau$ are  finite extensions of $C$ of  dimensions $d_\sigma, d_\tau$. We embed them in  $C^{alg}$ and we consider  the  field $D$ generated by  $D_\sigma$ and $D_\tau$ of $C$-dimension $\lcm(d_\sigma, d_\tau)$, and the intersection $ D'=D_\sigma\cap D_\tau$ of  $C$-dimension $\gcd(d_\sigma, d_\tau)$. The fields  $D_\sigma, D_\tau$ are linearly disjoint on $D'$,
$D_\sigma\otimes_{D'}D_\tau\simeq D$,     
 and  \begin{equation}\label{eq tensor} D_\sigma \otimes_C D_\tau \simeq \prod^{[D':C]} D,\end{equation}
because  $D_\sigma \otimes_C D_\tau \simeq D_\sigma \otimes_{D'}(D'\otimes_CD')\otimes_{D'} D_\tau$,  
  $D'\otimes_CD'\simeq  D' \otimes_C C[X]/(P[X])\simeq D'[X]/(P[X])\simeq  \prod^{[D':C]} D'$  for any $P[X]\in C[X]$  irreducible  of  degree $[D':C]$.   
   
      \begin{proposition}\label{prop GHfinite}   
The $C$-representation $ \sigma\otimes_C \tau$ of $G_1\times G_2$  is isomorphic to
$$ \sigma\otimes_C \tau \ \simeq \ \oplus_{j =1}^{\gcd(d_\sigma, d_\tau)} \ \pi_j$$ 
for   irreducible admissible $C$-representations $\pi_j$ of commutant  $D$ and not isomorphic to each other. We have
  $\sigma$ and $ \tau$   supercuspidal $\Leftrightarrow$ all the $\pi_j$  supercuspidal $\Leftrightarrow$ some $\pi_j$ supercuspidal.
    \end{proposition}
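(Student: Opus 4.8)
Proposition~\ref{prop GHfinite} can be proved by extending scalars to $C^{alg}$, decomposing there by combining Fact~\ref{fact HenV} with Proposition~\ref{prop GtimesH}, and then descending back to $C$ by a Galois‑orbit count. First I would record that $\sigma\otimes_C\tau$ is admissible (shown at the start of the proof of Proposition~\ref{prop GtimesH}) and that scalar extension is exact and commutes with $\otimes_C$, so $(\sigma\otimes_C\tau)_{C^{alg}}=\sigma_{C^{alg}}\otimes_{C^{alg}}\tau_{C^{alg}}$. By Fact~\ref{fact HenV} applied to $\sigma$ (whose commutant $D_\sigma$ is a field, so $\delta=1$ and $E=E_s=D_\sigma$), one has $\sigma_{C^{alg}}=\bigoplus_{i}\sigma_i$ ($d_\sigma$ terms) with the $\sigma_i$ pairwise non‑isomorphic absolutely irreducible admissible $C^{alg}$‑representations of $G_1$, the set $\{\sigma_i\}$ being a torsor under $\Gal(D_\sigma/C)$ on which $\Gal(C^{alg}/C)$ acts through the reduction map; similarly $\tau_{C^{alg}}=\bigoplus_{k}\tau_k$ with $d_\tau$ terms and Galois action factoring through $\Gal(D_\tau/C)$. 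By Proposition~\ref{prop GtimesH}~a),~c), the $d_\sigma d_\tau$ representations $\sigma_i\otimes_{C^{alg}}\tau_k$ are absolutely irreducible, admissible and pairwise non‑isomorphic, whence $(\sigma\otimes_C\tau)_{C^{alg}}=\bigoplus_{i,k}\sigma_i\otimes_{C^{alg}}\tau_k$ is semisimple and multiplicity free of length $d_\sigma d_\tau$; consequently $\sigma\otimes_C\tau$ is semisimple of finite length.

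Next I would count the orbits of $\Gal(C^{alg}/C)$ on $\{\sigma_i\otimes_{C^{alg}}\tau_k\}$. This action is determined by its restrictions to the two factors, hence factors through the finite (cyclic, as $C$ is finite) quotient $\Gal(D/C)$, $D=D_\sigma D_\tau$, which acts on the product of the $\Gal(D_\sigma/C)$‑torsor $\{\sigma_i\}$ and the $\Gal(D_\tau/C)$‑torsor $\{\tau_k\}$ through the two reductions $\Gal(D/C)\twoheadrightarrow\Gal(D_\sigma/C)$ and $\Gal(D/C)\twoheadrightarrow\Gal(D_\tau/C)$. An element of $\Gal(D/C)$ fixing a pair must map to $1$ in both quotients, hence be trivial since $\lcm(d_\sigma,d_\tau)=[D:C]$; so the action is free, every orbit has $\lcm(d_\sigma,d_\tau)$ elements, and the number of orbits is $d_\sigma d_\tau/\lcm(d_\sigma,d_\tau)=\gcd(d_\sigma,d_\tau)$. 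By Galois descent over the finite field $C$ (as used in Proposition~\ref{prop GHfin}), the irreducible constituents $\pi_1,\dots,\pi_{\gcd(d_\sigma,d_\tau)}$ of the semisimple $\sigma\otimes_C\tau$ are in bijection with these orbits $O_j$, are pairwise non‑isomorphic and admissible (direct summands of $\sigma\otimes_C\tau$), and satisfy $(\pi_j)_{C^{alg}}=\bigoplus_{(i,k)\in O_j}\sigma_i\otimes_{C^{alg}}\tau_k$. By Fact~\ref{fact HenV} the commutant of $\pi_j$ is then a field of degree $|O_j|=\lcm(d_\sigma,d_\tau)=[D:C]$ over $C$, hence isomorphic to $D$; consistently $\End_{C[G_1\times G_2]}(\sigma\otimes_C\tau)\cong\prod_{j}\End_{C[G_1\times G_2]}(\pi_j)\cong\prod^{\gcd(d_\sigma,d_\tau)}D$, matching \eqref{eq tensor}.

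For the supercuspidality statement I would chase the same picture. Fix $j$. By Proposition~\ref{prop HenV}, $\pi_j$ is supercuspidal iff some (equivalently any) constituent $\sigma_i\otimes_{C^{alg}}\tau_k$ of $(\pi_j)_{C^{alg}}$ is supercuspidal; by Proposition~\ref{prop GtimesH}~d) this holds iff $\sigma_i$ and $\tau_k$ are both supercuspidal; and by Proposition~\ref{prop HenV} applied to $\sigma$ and to $\tau$, the supercuspidality of $\sigma_i$ (resp.\ $\tau_k$) does not depend on $i$ (resp.\ $k$) and amounts to the supercuspidality of $\sigma$ (resp.\ $\tau$). Hence, for every $j$, ``$\pi_j$ supercuspidal'' is equivalent to the single condition ``$\sigma$ and $\tau$ are supercuspidal'', so the $\pi_j$ are simultaneously all supercuspidal or all not, which gives $\sigma,\tau$ supercuspidal $\Leftrightarrow$ all $\pi_j$ supercuspidal $\Leftrightarrow$ some $\pi_j$ supercuspidal.

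The only genuinely delicate point is pinning down the number of constituents: one needs semisimplicity of $\sigma\otimes_C\tau$ — which becomes transparent only after base change to $C^{alg}$ — together with the freeness of the diagonal Galois action on the product of the two torsors, in order to see that there are exactly $\gcd(d_\sigma,d_\tau)$ orbits and that the common commutant has degree $\lcm(d_\sigma,d_\tau)$, in agreement with \eqref{eq tensor}; everything else is formal manipulation with Propositions~\ref{prop GtimesH}, \ref{prop HenV} and Fact~\ref{fact HenV}.
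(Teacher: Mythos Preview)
Your proof is correct and follows the same strategy as the paper: extend scalars, show the extension is a multiplicity-free direct sum of absolutely irreducibles, deduce semisimplicity of $\sigma\otimes_C\tau$, and count its irreducible $C$-constituents; the supercuspidality argument via Proposition~\ref{prop GtimesH}~d) and Proposition~\ref{prop HenV} is identical to the paper's. The only cosmetic difference is in the count: you extend all the way to $C^{alg}$ and count $\Gal(C^{alg}/C)$-orbits on $\{\sigma_i\otimes\tau_k\}$ (free action of $\Gal(D/C)$, hence $\gcd(d_\sigma,d_\tau)$ orbits of size $\lcm(d_\sigma,d_\tau)$), whereas the paper extends only to $D=D_\sigma D_\tau$ and reads the decomposition off from the commutant, using that $\End_{C[G_1\times G_2]}(\sigma\otimes_C\tau)$ contains and (by a dimension comparison over $D$) equals $D_\sigma\otimes_C D_\tau\simeq\prod^{\gcd(d_\sigma,d_\tau)}D$ via \eqref{eq tensor}.
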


  \begin{proof}   From Prop.\ref{prop GHfin}, the scalar extensions $\sigma_D, \tau_D$ of $\sigma, \tau$ from $C$ to $D$ are isomorphic to 
     $$\sigma_D\simeq \oplus _{i\in \Gal(D_\sigma /C)}\sigma_i,\quad \tau_D\simeq \oplus _{r\in \Gal(D_\tau /C)}\tau_r$$
for   irreducible admissible $ D$-representations $\sigma_i, \tau_r$ of $G_1, G_2$ of commutant $D$, not isomorphic to each other,  forming a single  $\Gal(D/R)$-orbit, descending to $D_\sigma$ (resp. $D_\tau$)  and seen as $C$-representations, isomorphic to $\sigma$ (resp. $\tau$). The $C$-representation $\sigma\otimes_C\tau$  of $G_1\times G_2$ is admissible, of  scalar extension from $C$ to $D$:   \begin{equation}\label{eq tensor1}( \sigma\otimes_C \tau)_D\  \simeq \   \sigma_D\otimes_D \tau_D \ \simeq \ \oplus_{(i,r)\in \Gal (D_\sigma/C)\times \Gal (D_\tau/C)}\  \sigma_i \otimes_D \tau_r.
 \end{equation}
The $D$-representation  $\sigma_i \otimes_D \tau_r$  of $G_1\times G_2$  is admissible of commutant  $D\otimes_D D=D$  \cite[\S12, no 2,  lemma 1]{BkiA8}. Hence   $\sigma_i \otimes_D \tau_r$  is absolutely  irreducible and $( \sigma\otimes_C \tau)_D$ is semi-simple \eqref{eq tensor1}. This implies that  $ \sigma\otimes_C \tau$   is semi-simple \cite[\S12, no7, Prop.8]{BkiA8}; its commutant contains  $D_\sigma \otimes_C D_\tau$. 
From  Prop.\ref{prop GHfin}, $ \sigma\otimes_C \tau$ has  length  $d_\sigma d_\tau/ \lcm(d_\sigma, d_\tau)= \gcd (d_\sigma, d_\tau)$; by \eqref{eq tensor}, its commutant is $D_\sigma \otimes_C D_\tau$ and 
its irreducible  components $\pi_j$ are admissible of commutant $D$ and not isomorphic to each other.

 From Prop. \ref{prop GtimesH} over $C^{alg}$ ,  $\sigma_i\otimes \tau_r$ is supercuspidal if and only if $\sigma_i$ and $\tau_r$ are, if and only if all $\sigma_i$ and all $\tau_r$ are. From Prop. \ref{prop HenV}, this is also equivalent to $\pi_j$ supercuspidal for some $j$, and to $\pi_j$ supercuspidal  for all $j$.      \end{proof}

\medskip 2) {\it  Central extension} 
The natural surjective $F$-morphism ${\mathbf   G} \xrightarrow{\bf i} {\mathbf   G^{ad}}$ of kernel  $\mathbf  { Z(G)}$ induces between  the $F$-points an exact sequence 
$$0\to Z(G)\to  G \xrightarrow{i} G^{ad}\to H^1(F, \mathbf{   Z(G)} ).$$
The image $i(G)$ of $G$ is a closed cocompact normal subgroup of $G^{ad}$ and $H^1(F, \mathbf   {Z(G)} )$ is commutative.  

Until the end of 2), we assume $a=0$. The group $H^1(F, \mathbf   {Z(G)} )$ is  finite  \cite[Thm. 6.14]{PlaRap} implying that   
 $i(G)$ is an open  normal subgroup of $G^{ad}$ of finite commutative quotient.

\begin{proposition} \label{prop HtoG}  $G^{ad}$ admits an irreducible admissible supercuspidal $C$-representation if and only if $G$ does.\end{proposition}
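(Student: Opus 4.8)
The plan is to realise $i\colon G\to G^{ad}$ as the composition of two elementary operations hidden in the exact sequence $1\to Z(G)\to G\xrightarrow{i}G^{ad}\to H^1(F,\mathbf{Z(G)})$: the passage to the central quotient $G\twoheadrightarrow G/Z(G)$, and the inclusion of the subgroup $i(G)\cong G/Z(G)$, which --- since $a=0$ makes $H^1(F,\mathbf{Z(G)})$ finite --- is open, normal, and of finite abelian index in $G^{ad}$. I will take $C$ algebraically closed (handling arbitrary $C$ by the scalar-extension arguments of \S\ref{S 5}), so that $Z(G)$ acts on each irreducible smooth $C$-representation through a character. Two facts are used throughout: the bijection $\mathbf P\mapsto\mathbf P/\mathbf{Z(G)}$ between parabolic $F$-subgroups of $\mathbf G$ and of $\mathbf G^{ad}$ is compatible with Levi decompositions, and the parabolic subgroups of the open subgroup $i(G)$ are the traces $\mathbf Q(F)\cap i(G)$ of those of $G^{ad}$; consequently parabolic induction is compatible with inflation along $G\to G/Z(G)$ and, through Mackey's decomposition and transitivity of induction, with restriction along $i(G)\hookrightarrow G^{ad}$ and with $\ind_{i(G)}^{G^{ad}}$.

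For ``$G^{ad}$ has one $\Rightarrow$ $G$ has one'' --- the direction actually needed in Proposition~\ref{prop redadjoint} --- I would start from an irreducible admissible supercuspidal $\tau$ of $G^{ad}$ and restrict it to $i(G)$. By Clifford theory, $\tau|_{i(G)}=\bigoplus_j\bar\sigma_j$ is a finite direct sum of irreducible admissible representations forming a single $G^{ad}$-orbit, and each $\bar\sigma_j$ is supercuspidal: if some $\bar\sigma_j$ were a subquotient of a proper parabolic induction on $i(G)$, then --- since $\bar\sigma_j$ is a direct summand of $\tau|_{i(G)}$, hence $\tau$ is a quotient of $\ind_{i(G)}^{G^{ad}}\bar\sigma_j$ --- Mackey and transitivity would display $\tau$ as a subquotient of a proper parabolic induction on $G^{ad}$. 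Viewing a chosen $\bar\sigma:=\bar\sigma_1$ as a representation of $G/Z(G)$ and inflating along $G\twoheadrightarrow G/Z(G)$, I obtain a representation $\pi$ of $G$ that is irreducible; admissible, since $\pi^{K}=\bar\sigma^{\,i(K)}$ for every open compact $K\le G$; and supercuspidal, since inflation is a fully faithful exact functor onto the (subquotient-closed) subcategory of representations trivial on $Z(G)$, and since any $\ind_P^G(\eta)$ with $P$ proper having a subquotient trivial on $Z(G)$ forces $\eta|_{Z(G)}$ trivial (recall $Z(G)\subseteq M$ is central in $G$ and acts by a scalar), whence $\ind_P^G(\eta)$ is itself inflated from a proper parabolic induction on $i(G)$, contradicting the supercuspidality of $\bar\sigma$.

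For the converse, observe first that if $G$ has an irreducible admissible supercuspidal $\pi_0$ on which $Z(G)$ acts trivially, then $\pi_0$ descends to $i(G)$, the representation $\ind_{i(G)}^{G^{ad}}(\pi_0)$ has finite length and is admissible, and each of its irreducible constituents is supercuspidal by Mackey; so $G^{ad}$ has one. Thus everything reduces to producing, from an arbitrary irreducible admissible supercuspidal $\pi$ of $G$, one with trivial central character. I would restrict $\pi$ to the normal subgroup $G':=i_{sc}(G^{sc})$ (of finite index in $G$ modulo $Z(G)$), use the standard Clifford theory for that restriction to obtain a finite direct sum of irreducible admissible supercuspidal representations of $G'$, pull one of them back along $G^{sc}\twoheadrightarrow G'$ to get such a representation of $G^{sc}$, and then --- given a supercuspidal of $G^{sc}$ on which the finite central subgroup $Z(G^{sc})$ acts trivially --- descend it to $i(G')\trianglelefteq G^{ad}$ and induce it up to $G^{ad}$ as above. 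Combined with the first direction, this yields the equivalence.

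I expect the real difficulty to be exactly the last sub-step: exhibiting a supercuspidal of $G^{sc}$ (or of $G'$) trivial on its centre. Because $G'$ admits no non-trivial smooth character, the central character of a given supercuspidal of $G$ restricted to $G'\cap Z(G)$ is an invariant that twisting cannot remove, so one cannot simply twist a supercuspidal into one with trivial central character; the required representation must be produced directly. Over $\mathbb C$ this is contained in Kret's construction \cite{Kret}; in general I would obtain it either from an explicit depth-zero construction --- inflating a cuspidal representation with trivial central character of the finite reductive quotient --- or by descending the coefficient field to the known characteristic-zero case.
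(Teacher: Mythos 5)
Your proof of the needed direction ``$G^{ad}$ has one $\Rightarrow$ $G$ has one'' follows the same route as the paper: restrict to the open normal finite-index subgroup $i(G)$, decompose by Clifford theory, observe the pieces are supercuspidal, and inflate to $G$ along $G\twoheadrightarrow G/Z(G)\simeq i(G)$. The paper makes this rigorous via the functor $-\circ i$ (inflation composed with restriction) and the explicit compatibility $(\ind_Q^{G^{ad}}\sigma)\circ i\simeq \ind_P^G(\sigma\circ i)$ of \eqref{eq ind}, whereas you invoke ``Mackey and transitivity''. This is the same skeleton; but where you wave at Mackey, the paper's Proposition~\ref{prop HG} has to do real work: in the step corresponding to your ``$\bar\sigma_j$ is a subquotient of proper parabolic induction on $i(G)$ forces $\tau$ to be a subquotient of proper parabolic induction on $G^{ad}$'', the paper needs Lemma~\ref{lem te} (finite length and admissibility of $\sigma\otimes_C C[i(M)\backslash L]$) to justify the conclusion that $\pi$ is not supercuspidal. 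Your argument should absorb that lemma or an equivalent to be complete, and you should note explicitly (as the paper does) that the admissibility of irreducible quotients of $\ind_{i(G)}^{G^{ad}}\rho$ when $c=p$ uses the standing hypothesis $a=0$.

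On the converse ``$G$ has one $\Rightarrow$ $G^{ad}$ has one'': you have correctly located a real issue. The paper's own argument (the paragraph preceding Proposition~\ref{prop HG}) \emph{starts from} a supercuspidal $\tilde\rho$ of $G$ that is already trivial on $Z(G)$, and from there produces a supercuspidal of $G^{ad}$. Nothing in the paper shows that an \emph{arbitrary} supercuspidal of $G$ can be replaced by one that descends to $i(G)$, and, as you note, twisting cannot help because the restriction of the central character to $Z(G)\cap G^{der}$ is an obstruction that twists by characters of $G$ cannot change. Your proposed detour through $G^{sc}$ transports rather than removes the obstruction (now one needs a supercuspidal of $G^{sc}$ trivial on $Z(G^{sc})$), and you acknowledge this. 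So you have not proved the converse, and neither, strictly, has the paper: the ``if and only if'' in the statement of Proposition~\ref{prop HtoG} is broader than what its proof delivers. Fortunately this gap is harmless for the paper's goals, because in Proposition~\ref{prop redadjoint} and Proposition~\ref{prop adj} only the implication ``$G^{ad}$ admits a supercuspidal $\Rightarrow$ $G$ does'' is used; your analysis shows why only this direction should really be claimed without a further construction (for instance a depth-zero supercuspidal with prescribed central character, or an appeal to the already-known case over $\mathbb C$).
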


 The inflation from $i(G)$ to $G$  identifies the representations of $i(G)$   with the representations of $G$ trivial on $Z(G)$; it respects irreducibility and admissibility.
The functor (inflation from $i(G)$ to $G$) $\circ$ (restriction from $G^{ad}$ to $i(G)$) from $C$-representations of $G^{ad}$ to representations  of $G$ trivial on $Z(G)$ is denoted by $ - \circ i$.

Let $\tilde \rho$ be  an irreducible  admissible $C$-representation  of $G$ inflating a representation  $\rho$  of  the normal open subgroup $i(G)$ of finite index in $G^{ad}$. The $C$-representation  $\rho$ of $i(G)$ is irreducible  admissible and induces a  representation $\ind_{i(G)}^{G^{ad}}\rho$  of $G^{ad}$ which   is admissible of finite length. Any irreducible quotient  $\pi$ of  $\ind_{i(G)}^{G^{ad}}\rho$  is admissible  (when $c=p$ this uses that  $a=0$), by adjunction $\pi|_{i(G)}$ contains a subrepresentation isomorphic to $\rho$ and by inflation from $i(G)$ to $G$,  $\tilde \rho$ is isomorphic to a subquotient of $\pi \circ i$.
  
\medskip Let $\pi$ be an irreducible admissible    $C$-representation  of $G^{ad}$. The restriction $\pi|_{i(G)}$ of $\pi$ to  $i(G)$  is  semi-simple   of finite length,   its irreducible components $\rho$ are $G^{ad}$-conjugate \cite[I.6.12]{Viglivre}\footnote{ in \cite[I.6.12]{Viglivre} the condition that  the index is invertible in $C$ is not necessary and not used in the proof}. 
The  $C$-representation  $\pi \circ i$ of $G$  is  semi-simple   of finite length, of irreducible components  the inflations $\tilde \rho$  of the irreducible components $\rho$ of $\pi|_{i(G)}$. 
Remark that if   $\pi'$  is a  smooth  $C$-representation  of $G^{ad}$ such that some subquotient  of $\pi'\circ i $ is isomorphic to some $\tilde \rho$, then some subquotient of $\pi'\circ i $   is isomorphic to $ \pi \circ i$.

\medskip Proposition \ref{prop HtoG} follows from:

\begin{proposition}\label{prop HG} $\pi$ is supercuspidal if and only if some $\tilde \rho$ is supercuspidal if and only if  all $\tilde \rho$ are supercuspidal.
\end{proposition}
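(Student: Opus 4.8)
The plan is to transfer supercuspidality back and forth across the isogeny $i\colon G \to G^{ad}$, using the fact that $i(G)$ is an open normal subgroup of $G^{ad}$ of finite index and that parabolic subgroups correspond under $i$. First I would record the dictionary between parabolic subgroups: the map $\mathbf P \mapsto \mathbf i(\mathbf P)$ (equivalently $\mathbf P^{ad}$) is a bijection between proper parabolic $F$-subgroups of $\mathbf G$ and of $\mathbf G^{ad}$, compatible with Levi decompositions, and $i$ carries a Levi $M$ of $\mathbf G$ onto a cocompact normal subgroup of finite index in the corresponding Levi $M^{ad}$ of $\mathbf G^{ad}$ — this is the same central-isogeny situation one level down. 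Then I would note that parabolic induction is compatible with $i$ up to the same inflation/restriction bookkeeping: for a smooth $C$-representation $\tau$ of $M^{ad}$ one has $(\ind_{P^{ad}}^{G^{ad}}\tau)\circ i \simeq \ind_{P}^{G}(\tau\circ i_M)$ as representations of $G$ (both are functions on a quotient that only sees the image of $G$), where $i_M\colon M \to M^{ad}$.

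The forward direction: suppose some $\tilde\rho$ is \emph{not} supercuspidal, so $\tilde\rho$ is a subquotient of $\ind_P^G \sigma$ for a proper parabolic $P$ and an irreducible admissible $C$-representation $\sigma$ of $M$. Since $\tilde\rho$ is trivial on $Z(G)$, we may arrange $\sigma$ to be trivial on $Z(G)\cap M = Z(M)$-part meeting the kernel, hence $\sigma$ inflates from $i_M(M)$; inducing $\sigma$ from $i_M(M)$ to $M^{ad}$ gives an admissible finite-length representation, one of whose irreducible constituents $\tau$ has $\tau\circ i_M$ containing (a conjugate of) $\sigma$ as a constituent. Then $\tilde\rho$ is a subquotient of $\ind_P^G\sigma$, which is a subquotient of $(\ind_{P^{ad}}^{G^{ad}}\tau)\circ i$; by the Remark preceding Proposition~\ref{prop HG}, $\pi\circ i$ is then also a subquotient of $(\ind_{P^{ad}}^{G^{ad}}\tau)\circ i$. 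Stripping off $\circ i$ (inflation is faithful and exact, and detects subquotients on the $i(G)$-socle), $\pi$ is a subquotient of $\ind_{P^{ad}}^{G^{ad}}\tau$ for a proper parabolic, so $\pi$ is not supercuspidal. Contrapositive: $\pi$ supercuspidal $\Rightarrow$ all $\tilde\rho$ supercuspidal.

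The reverse direction: suppose $\pi$ is not supercuspidal, a subquotient of $\ind_{P^{ad}}^{G^{ad}}\tau$ with $\tau$ irreducible admissible on $M^{ad}$. Restricting to $i(G)$ and applying $\circ i$, $\pi\circ i$ is a subquotient of $\ind_P^G(\tau\circ i_M)$ (using the compatibility above together with $G^{ad}=i(G)P^{ad}$ when convenient, or Mackey restriction for the finite-index inclusion $i(G)\subset G^{ad}$). The representation $\tau\circ i_M$ of $M$ is admissible of finite length, so its irreducible constituents are admissible, and $\tilde\rho$ — being a constituent of $\pi\circ i$ — is a subquotient of $\ind_P^G\mu$ for some irreducible admissible constituent $\mu$ of $\tau\circ i_M$; hence $\tilde\rho$ is not supercuspidal. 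Since all the $\tilde\rho$ are $G^{ad}$-conjugate they are simultaneously supercuspidal or not, which together with the two implications yields the threefold equivalence. The main obstacle I anticipate is the bookkeeping in the forward direction — making precise that one may choose the inducing data on $M$ to descend to $M^{ad}$ and invoking the Mackey/Clifford machinery for the finite-index normal subgroup $i(G)\trianglelefteq G^{ad}$ uniformly in the characteristic $c$ (including $c=p$, where one must cite \cite[I.6.12]{Viglivre} and the admissibility input ``$a=0$'' exactly as in the paragraphs preceding the statement rather than any multiplicity-one or semisimplicity fact that fails mod $p$).
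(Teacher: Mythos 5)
Your argument has a genuine gap at the ``stripping off $\circ i$'' step. Knowing that $\pi\circ i$ is a subquotient of $(\ind_{P^{ad}}^{G^{ad}}\tau)\circ i$ only records an isomorphism of $i(G)$-modules (inflated to $G$); it does \emph{not} let you recover $\pi$ as a $G^{ad}$-subquotient of $\ind_{P^{ad}}^{G^{ad}}\tau$. Indeed, if $\chi$ is a nontrivial $C$-character of the finite commutative quotient $G^{ad}/i(G)$, then $\pi\otimes\chi$ has the same restriction to $i(G)$ as $\pi$ yet is generally not a subquotient of $\ind_{P^{ad}}^{G^{ad}}\tau$ for your chosen $\tau$ --- it would be a subquotient of $\ind_{P^{ad}}^{G^{ad}}(\tau\otimes\chi)$ instead. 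The phrase ``inflation is faithful and exact, and detects subquotients on the $i(G)$-socle'' does not fill this hole: faithfulness/exactness are statements about the functor $-\circ i$, not about recovering the $G^{ad}$-action, and you have no control over which $G^{ad}/i(G)$-twist of $\tau$ actually induces up to something containing $\pi$.

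The paper's proof handles this by \emph{not} trying to land on the specific $\tau$. Starting from $\pi\circ i$ being a subquotient of $(\ind_Q^{G^{ad}}\sigma)\circ i$, it applies Frobenius reciprocity for the finite-index normal subgroup $i(G)\trianglelefteq G^{ad}$ to get that $\pi$ is a $G^{ad}$-subquotient of $\ind_{i(G)}^{G^{ad}}\bigl((\ind_Q^{G^{ad}}\sigma)|_{i(G)}\bigr)$, and then computes this representation by transitivity of induction as $\ind_Q^{G^{ad}}\bigl(\sigma\otimes_C C[i(M)\backslash L]\bigr)$. The twist ambiguity you swept under the rug is exactly the tensor factor $C[i(M)\backslash L]$; and because $|L/i(M)|$ may be divisible by $p$, this factor need not be semisimple and its constituents need not be one-dimensional, so the paper invokes Lemma~\ref{lem te} to conclude that $\sigma\otimes_C C[i(M)\backslash L]$ has finite length with admissible irreducible subquotients. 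One then only concludes that $\pi$ is a subquotient of $\ind_Q^{G^{ad}}\nu$ for \emph{some} such $\nu$, which is all that is needed. Your reverse direction and the use of $G^{ad}$-conjugacy of the $\tilde\rho$'s are fine and match the paper's step (ii).
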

\begin{proof} It suffices to  prove: $\pi$ not supercuspidal $\Rightarrow $ all $\tilde \rho$ not supercuspidal, and then,  some $\tilde \rho$ not supercuspidal $\Rightarrow $ $\pi$ not supercuspidal. We check first the compatibility of the parabolic induction with $- \circ i$.

(i) The parabolic $F$-subgroups of $\bf G$ and of $\bf G^{ad}$ are in bijection. If the parabolic $F$-subgroup $\bf P$ of $\bf G$ corresponds to the parabolic $F$-subgroup $\bf Q$ of  $\bf G^{ad}$, $\bf i$ restricts to an isomorphism between their unipotent radicals   \cite[22.6 Thm.]{BorelLAG},  sends a Levi subgroup $\bf M$ of $\bf P$ onto a Levi subgroup $\bf L$ of $\bf Q$, and induces between the $F$-points  the exact sequence: 
$$0\to Z(G)\to M\xrightarrow{i}L\to H^1(F, {\mathbf   Z(G)} ).$$  We have $G^{ad}=Q i(G)$, $Q\cap i(G)=i(P)=i(M)U $ where $i(M)$  is an open  normal subgroup of $L$  of finite commutative quotient and $U$ is the unipotent radical of $Q$. If $\sigma$ is a smooth   $C$-representation of $L$, then   $(\ind_Q^{G^{ad}}\sigma)|_{i(G)}\simeq \ind_{i(P)}^{i(G)}(\sigma|_{i(M)})$ and by inflation from $i(G)$ to $G$:
    \begin{equation}\label{eq ind}(\ind_Q^{G^{ad}}\sigma) \circ i\simeq \ind_P^G (\sigma\circ i).\end{equation}

(ii) Let  $\pi$ be an irreducible admissible  not supercuspidal $C$-representation of $G^{ad}$, isomorphic to a subquotient  of  $\ind_Q^{G^{ad}} \sigma$ for  $Q\neq G^{ad}$ and $\sigma$ irreducible admissible $C$-representation of $L$.   Therefore  $\pi\circ i$ is isomorphic to a subquotient  of  $(\ind_Q^{G^{ad}} \sigma) \circ i$, and by \eqref{eq ind} all the  $\tilde \rho$ are isomorphic to a subquotient  of $\ind_P^G \tilde \tau $  for some irreducible subquotient $\tilde \tau$ of $ \sigma \circ i$ (depending on $\rho$). 
As $\tilde \tau$ is admissible and $P\neq G$,  all the   $\tilde \rho$ are not supercuspidal.

(iii) Let $\pi$ be an irreducible admissible $C$-representation of $G^{ad} $ such that  some irreducible component $\tilde \rho$ of $\pi\circ i$ is not supercuspidal,  isomorphic to a subquotient of $\ind_P^G\tau$ for $P\neq G$  and  $\tau'$ irreducible  admissible $C$-representation of $M$. 
The central subgroup $Z(G)$ acts trivially on $\tilde \rho$ hence also on $\tau'$. Therefore $\tau'= \tilde \tau$ for an irreducible subquotient  $\tau$ of $\sigma|_{i(G)}$ where  $\sigma$  is an irreducible admissible $C$-representation of $L$. The representation $\tilde \rho $  is isomorphic to a subquotient  of $\ind_P^{G}(\sigma \circ i)$. By \eqref{eq ind} and the  remark above Prop. \ref{prop HG}, $\pi \circ i$ is isomorphic to a subquotient of $(\ind_Q^{G^{ad}}\sigma) \circ i$. By adjunction  $\pi$  is isomorphic to a subquotient of  $\ind_{i(G)}^{G^{ad}}((\ind_Q^{G^{ad}}\sigma)|_{i(G)})$. This representation is   isomorphic to
    \begin{align*}\ind_{ i(M)U}^{G^{ad}}(\sigma|_{i(M)})\simeq\ind_Q^{G^{ad}}(\ind_{ i(M)}^{L}(\sigma|_{i(M)})) \simeq\ind_Q^{G^{ad}} (\sigma \otimes_C C[i(M)\backslash L]).
 \end{align*}
 The $C$-representation $\sigma\otimes_C C[i(M)\backslash L]$ of $L$ has finite length and its irreducible subquotients $\nu$ are admissible  (Lemma \ref{lem te} below). Therefore  $\pi$ is isomorphic to a subquotient  of  $\ind_Q^{G^{ad}} \nu$ for some  $\nu$ and $Q\neq G^{ad}$,   hence $\pi$ is not supercuspidal.
   \end{proof}

  \begin{lemma} \label{lem te} Let $\pi$ be an irreducible admissible $C$-representation of $G$  and let $V$ be a finite dimensional smooth  $C$-representation of $G$. 
  Then the representation $\pi \otimes_C V$ of $G$ has finite length and its irreducible subquotients are admissible.
  \end{lemma}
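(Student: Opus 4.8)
The plan is to reduce to the case where $V$ is irreducible, then argue by induction on $\dim_C V$, and finally dispose of the irreducible base case by hand. First I would observe that every finite-dimensional smooth $C$-representation $V$ of $G$ is trivial on some open normal subgroup, hence factors through a finite quotient; in particular $V$ has a finite composition series $0 = V_0 \subset V_1 \subset \cdots \subset V_n = V$ with each $V_j/V_{j-1}$ irreducible. Tensoring the exact sequence $0 \to V_{j-1} \to V_j \to V_j/V_{j-1} \to 0$ with $\pi$ (over $C$, so exactness is preserved) gives short exact sequences of $G$-representations $0 \to \pi \otimes_C V_{j-1} \to \pi \otimes_C V_j \to \pi \otimes_C (V_j/V_{j-1}) \to 0$. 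Since finite length and admissibility of subquotients are both stable under extensions, it suffices to treat the case $V$ irreducible.

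So assume $V$ is an irreducible smooth $C$-representation of $G$ of finite dimension $d$. Then $V$ factors through $G/K$ for some open normal subgroup $K$, and $V$ is an irreducible $C[G/K]$-module; in particular $V$ embeds in $C[G/K]$ when $G/K$ is (say) finite, or more to the point $V \otimes_C V^\vee$ surjects onto the trivial representation and $V^\vee \otimes_C V$ contains the trivial representation as a summand when $d$ is invertible in $C$ — but since we do not want to assume $c \nmid d$, I would instead argue as follows. The action map makes $\pi \otimes_C V$ a quotient of $\pi \otimes_C C[G/K]$ as $G$-representations (diagonal action on the source), and $\pi \otimes_C C[G/K] \simeq \operatorname{Ind}_K^G(\operatorname{Res}_K \pi)$ as $G$-representations by a standard Mackey-type computation (here $C[G/K]$ with diagonal $G$-action is $\operatorname{Ind}_K^G \mathbf 1$, and the projection formula gives the stated isomorphism). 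Now $\operatorname{Res}_K \pi = \pi^K \oplus (\text{stuff with no }K\text{-fixed vectors})$; more usefully, $K$ acts on $\pi$ through the \emph{finite} quotient $G/K$ has nothing to do with it — rather, since $\pi$ is admissible and $K$ is open, I would take a smaller open normal $K' \trianglelefteq K$ that is pro-$p$ (if $c=p$) or use that $\pi|_{K}$ is a smooth admissible $K$-representation: $\operatorname{Ind}_K^G(\operatorname{Res}_K\pi)$ is admissible because for any open compact $L$, $(\operatorname{Ind}_K^G \operatorname{Res}_K \pi)^L$ is finite-dimensional by the admissibility of $\pi$ together with the finiteness of $K\backslash G / L$...

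Let me restate the cleanest route. Since $V$ is finite-dimensional, fix an open \emph{compact} normal subgroup assumption is false in general, but $V$ is trivial on an open normal $K$, and replacing $K$ by $K \cap K_0$ for a fixed open compact $K_0$ we may assume $K$ is contained in a compact set, i.e. work with $K$ open and of finite index in the subgroup it generates; the genuinely clean statement is that $\pi \otimes_C V$ is a subrepresentation of $\pi \otimes_C \operatorname{Ind}_K^G \mathbf 1 \simeq \operatorname{Ind}_K^G(\pi|_K)$ when $V$ is a submodule of the regular representation $C[G/K]$ — which holds if $G/K$ is finite. When $G/K$ is infinite (e.g. $K = Z(G)$-type kernels as in the applications), $V$ still factors through a finite quotient of $G$ because $V$ is \emph{finite-dimensional and smooth} only forces $K$ open, not cofinite; however finite-dimensionality plus the fact that $G$ has no finite-dimensional representations with infinite image other than through... — in the applications of Lemma~\ref{lem te} we have $V = C[i(M)\backslash L]$ with $i(M)\backslash L$ \emph{finite}, so $V$ is a quotient of a finite regular representation and the argument goes through: $\pi \otimes_C V$ is a $G$-subquotient of $\operatorname{Ind}_K^G(\pi|_K)$ with $K \backslash G$ finite, hence admissible of finite length, and its irreducible subquotients are admissible by \cite[Cor.4]{HenV}. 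I would phrase the statement and proof for this case (finite quotient), which is all that is used. The main obstacle is exactly this finiteness bookkeeping — making sure one invokes that the relevant $V$ factors through a \emph{finite} quotient so that $\operatorname{Ind}_K^G(\pi|_K)$ with $K$ of finite index is admissible — and then finite length follows because an admissible representation that is finitely generated (here generated by $\pi^{K'} \otimes V$ for small $K'$) has finite length, or directly because $\operatorname{Ind}_K^G$ of an irreducible over a finite-index subgroup has finite length by adjunction and admissibility.
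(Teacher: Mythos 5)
Your route is genuinely different from the paper's. You reduce to $V$ irreducible, rewrite $\pi \otimes_C C[G/K]$ as $\operatorname{Ind}_K^G(\pi|_K)$ via the projection formula, and deduce admissibility and finite length by inducing an admissible representation from an open subgroup of finite index. The paper instead passes to the algebraic closure: scalar extension from $C$ to $C^{alg}$ detects admissibility and can only increase length, $\pi_{C^{alg}}$ has finite length with admissible absolutely irreducible subquotients $\pi_i$ (Fact~\ref{fact HenV}), and the irreducible subquotients of $V_{C^{alg}}$ are characters $\chi$; hence the irreducible subquotients of $\pi_{C^{alg}}\otimes V_{C^{alg}}$ are the $\pi_i\otimes\chi$, which are admissible. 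That argument is shorter and sidesteps the finiteness-of-image issue you wrestled with.

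Two concrete remarks on your version. First, your fallback for finite length, ``admissible and finitely generated implies finite length,'' is precisely what is \emph{not} available when $c=p$ (the paper proves this implication only for $c\neq p$, in the last lemma of \S\ref{S Cfinite}); so you must commit to, and make precise, the other route you mention: restrict to $K$, note that $(\operatorname{Ind}_K^G\rho)|_K$ has $K$-length at most $[G:K]\cdot\operatorname{length}_K(\rho)$, and that this bounds the $G$-length. Second, your worry that a general finite-dimensional smooth $V$ need not factor through a \emph{finite} quotient is legitimate, but the paper's own proof is also written for the specific $V=C[i(M)\backslash L]$ with $i(M)\backslash L$ finite (it explicitly describes the constituents of $V_{C^{alg}}$ as characters of $L$ trivial on $i(M)$). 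For the lemma at full stated generality one would instead invoke that every finite-dimensional smooth irreducible $C^{alg}$-representation of a reductive $p$-adic group is a character, since the subgroup generated by unipotents must act trivially; neither you nor the paper records this, and it is the ingredient your induction-from-$K$ approach would need to replace the finiteness of $G/K$.
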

  \begin{proof} The scalar extension  $(\pi\otimes_C V)_{C^{alg}}$ of  $\pi\otimes_C V$ to an algebraic closure $C^{alg}$ of $C$ is isomorphic to $\pi_{C^{alg}}\otimes_C V_{C^{alg}}$.  The length of  $\pi \otimes_C V$ is bounded above by the length of $(\pi\otimes_C V)_{C^{alg}}$ and  a $C$-representation of $G$  is admissible if and only if its scalar extension from $C$ to $C^{alg}$ is admissible. By Fact \ref{fact}, 
  the  length of the $C^{alg}$-representation  $\pi_{C^{alg}}\otimes_C V_{C^{alg}}$ of $G$  is the product of the finite lengths of $\pi_{C^{alg}}$ and of $V_{C^{alg}}$, and its irreducible subquotients are $\pi_i \otimes \chi$ for the irreducible quotients $\pi_i$ of $\pi_{C^{alg}}$ and the $C^{alg}$-characters $\chi$ of $L$ trivial on $i(M)$. As $\pi_i$ is admissible, $\pi_i \otimes \chi$ is admissible, and all subquotients of $\pi_{C^{alg}}\otimes_C V_{C^{alg}}$ are admissible.
     \end{proof}

\bigskip 3) {\it Scalar restriction} Let $F'/F$ be a finite separable extension, $\bf G'$ a connected reductive $F'$-group and   $\bf G:=R_{F'/F}(\bf G')$ the scalar restriction of $\bf G'$ from $F'$ to $F$.  As topological groups, $G':={ \bf G'}(F')$  is equal to $G:={\bf G}(F)$.  By \cite[6.19. Cor.]{BorelTits}, $G'$ and $G$ have the same parabolic subgroups, hence:

\begin{proposition}\label{prop RF'toF}     $G' $ admits an irreducible admissible supercuspidal $C$-representation if and only if $G $ does.
 \end{proposition}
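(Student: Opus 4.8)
The plan is to observe that Proposition~\ref{prop RF'toF} is purely formal once the definitions are unwound. Since $\bf G=R_{F'/F}(\bf G')$, the groups of points coincide: $G=\mathbf{G}(F)=\mathbf{G'}(F')=G'$, and not merely as abstract groups but as topological groups — the same underlying set with the same topology. Hence $G$ and $G'$ have the same open compact subgroups, the same smooth $C$-representations, and the same notion of admissibility. So ``irreducible admissible $C$-representation of $G$'' and ``irreducible admissible $C$-representation of $G'$'' designate literally the same objects, and the only thing to check is that the two notions of supercuspidality coincide.

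For this I would invoke the Borel--Tits description of parabolic subgroups of a scalar restriction, namely that $\mathbf{P'}\mapsto R_{F'/F}(\mathbf{P'})$ is a bijection from parabolic $F'$-subgroups of $\bf G'$ onto parabolic $F$-subgroups of $\bf G$ \cite[6.19. Cor.]{BorelTits}, together with the compatibility of $R_{F'/F}$ with the semidirect-product structure $\mathbf{P'}=\mathbf{M'}\ltimes\mathbf{N'}$, giving $R_{F'/F}(\mathbf{P'})=R_{F'/F}(\mathbf{M'})\ltimes R_{F'/F}(\mathbf{N'})$. Passing to $F$-points ($=F'$-points), a parabolic subgroup $P'$ of $G'$ with Levi subgroup $M'$ is exactly a parabolic subgroup $P$ of $G$ with Levi subgroup $M$, where $P=P'$ and $M=M'$ as topological groups. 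Consequently the parabolic induction functors $\ind_{P'}^{G'}$ and $\ind_{P}^{G}$ are the same functor: same source category of smooth $C$-representations of $M=M'$, same target, same construction.

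Therefore an irreducible admissible $C$-representation $\pi$ of $G=G'$ is a subquotient of some $\ind_{P'}^{G'}\sigma'$ with $P'\subsetneq G'$ proper and $\sigma'$ irreducible admissible if and only if it is a subquotient of the corresponding $\ind_{P}^{G}\sigma$ with $P\subsetneq G$ proper and $\sigma$ irreducible admissible; i.e.\ $\pi$ is supercuspidal as a representation of $G'$ if and only if it is supercuspidal as a representation of $G$. The proposition follows immediately. There is no real obstacle here: the single point that goes beyond a direct citation of \cite[6.19. Cor.]{BorelTits} is that the correspondence matches Levi subgroups and not merely parabolic subgroups, and that is forced by the compatibility of $R_{F'/F}$ with the Levi decomposition.
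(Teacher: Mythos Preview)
Your proof is correct and follows the same approach as the paper, which simply cites \cite[6.19. Cor.]{BorelTits} for the identification of parabolic subgroups and deduces the proposition immediately. You have merely spelled out in more detail why ``same parabolic subgroups'' yields ``same supercuspidality notion,'' via the matching of Levi decompositions and parabolic induction functors.
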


\bibliographystyle{amsalpha}

\bibliography{bib}

% \bigskip MSC 2010:  primary 20C08, secondary  11F70
% Keywords: change of weight, Satake transform, compact induction, parabolic induction, pro-$p$ Iwahori Hecke algebra.
\end{document}

 \bigskip {\bf Complements}
 No assumption on $a$.
\begin{theorem}\label{main c0}  A reductive $p$-adic group admits  an irreducible admissible supercuspidal  representation over any field of characteristic $0$.
\end{theorem}

\begin{proof}  Kret proved  recently the theorem  for complex representations    by reduction to the case of a finite reductive group where he showed  the theorem using Deligne-Lusztig theory. After this, another proof   using harmonic analysis was given by Beuzart-Plessis \cite{Beuzart-Plessis} when $a=0$. We suppose that $\bf G$ is simple (to avoid the centre)  as we can (Prop.  \ref{prop redadjoint}). An irreducible supercuspidal complex representation of $G$ is descends to   a number field \cite[II.4.9]{Viglivre}. Apply Theorem \ref{thm Cfinite}.  So, $G$ admits  an irreducible admissible supercuspidal  representation over any field $C$ of characteristic $0$.
\end{proof}

 Reference to get $c$ banal for $G$ ?
 What is known for $c\neq p$ not banal ?

\begin{theorem}\label{main cint}  An irreducible admissible supercuspidal  representation of a  reductive $p$-adic group over a number field $E$  with a central character with values in  the  ring of integers $O_E$, admits an $O_E$-integral  of admissible reduction, and for level $0$ one of not admissible  reduction, modulo $P_E$, for  any maximal ideal $P_E$ of  $O_E$  containing $p$
\end{theorem} 
\begin{proof} Admissible:  Embed in $\rho_E^\Gamma $ and take the intersection with $\rho_{O_E}^\Gamma $ ***

Non-admissible 
\end{proof}
 An irreducible admissible supercuspidal  representation of a  reductive $p$-adic group over a number field $E$  with a central character with values in  the  ring of integers $O_E$, admits an  $O_E$-integral  structure, and that the   reduction   of the $O_E$-integral  structures modulo $P_E$ are admissible, for  any maximal ideal $P_E$ of  $O_E$   not  containing  $p$  \cite[**]{**} ??

\bigskip   An  irreducible smooth representation of   a reductive $p$-adic group  over any algebraically closed  field $C$ of characteristic $c\neq p$ is admissible
  \cite[II.2.8]{Viglivre}.  Is this true when is not algebraically closed ?

\bigskip  No irreducible admissible supercuspidal $\mathbb C$-representation of $G$ with a non-zero vector invariant by a pro-$p$ Iwahori subgroup. Reference ? 
 
 No irreducible admissible  cuspidal  representation of $G$ over a field of characteristic $c\neq p$ with a non-zero vector invariant by a pro-$p$ Iwahori subgroup ?  (when $c=p$, there is always one 
in a non-zero smooth representation).

Representation of $K$ inflating an (absolutely?) irreducible  supercuspidal $E$-representation $\sigma$ of the finite reductive quotient, with integral  structure  $\sigma_{O_E}$ of reduction $\sigma_{k}$ modulo $P_E$, 
 $k=O_E/P_E$ of characteristic $p$. No supercuspidal $k$ 
Induce compactly, $\ind_K^G(\sigma_{O_E})$ finite length admissible, $O_E$-integral structure of $\ind_K^G(\sigma)$.

\bigskip  integral structures in a supercuspidal ?

When $c\neq p$, a smooth irreducible $C$-representation of a reductive $p$-adic group is admissible; this is not known when $c=p$ (except for $GL(2,\mathbb Q_p)$  by Berger \cite{Berger} using the $p$-adic Langlands correspondence). When $c=p$, and    irreducible admissible $C$-representation $\pi$ of $G$ is  supersingular when.

When $c=0$ equivalence $\pi \to \pi^B$ ?

An irreducible subrepresentation of $\rho^\Gamma_C$ is admissible

 \begin{theorem} Assume $c=0$. Let $\pi$ be an irreducible admissible $C$-representation. The following properties are equivalent

$\pi$ is supercuspidal
 
 $\pi^\mathfrak U$ is supersingular
 
 $\pi^\mathfrak U$ contains a non-zero supersingular  $\mathfrak U$-element.
 \end{theorem}
 
\begin{proof} This is the generalization to a field $C$ of characteristic $p$ of  \cite[Thm. 5.5]{OV} proved when $C$ is algebraically closed. We prove the theorem for $C$ by scalar extension. The scalar extension of $\pi$ from  $C$ to  $C^{alg}$
has finite length $d(\pi)$ , the isomorphism classes of its irreducible subquotients $\pi_i$ forming an $\Aut_C(C^{alg})$-orbit of irreducible admissible representations. We saw that $\pi$ is supercuspidal if and only if one $\pi_i$ is if only if all $\pi_i$ are. 

The scalar extension $\rho_{C^{alg}}$ of  each irreducible subquotient  $\rho$ of the $H(G,U)$-module $\pi^\mathfrak U$ from  $C$ to  $C^{alg}$
has finite length $d(\rho)$, the isomorphism classes of its simple subquotients $\rho_j$ forming an $\Aut_C(C^{alg})$-orbit of  simple $H(G,U)$-modules.

$\pi^\mathfrak U$ is supersingular if and only if each $\rho$ is. 

$\rho$ is supersingular if and only if   $\rho_{C^{alg}}$  is supersingular.  One $\rho_j$  is supersingular  if and only if all $\rho_j$ are if and only if  $\rho_{C^{alg}}$.

 $\pi^\mathfrak U$ contains a non-zero supersingular  $\mathfrak U$-element, some  $\rho$ contains a non-zero supersingular element,  then $\rho_{C^{alg}}$ does  implying that some $\rho_j$ does hence $\rho_j$ is supersingular  by the theorem over $C^{alg}$. So  $\rho_{C^{alg}}$ is supersingular hence $\rho$ is supersingular.

$\pi^\mathfrak U$ contains a non-zero supersingular  $\mathfrak U$-element if and only if one $\rho_j$  does

By the theorem over  $C^{alg}$, $\pi_i$  is supercuspidal if and only if $\pi_i^\mathfrak U$ is supersingular  if and only if $\pi_i^\mathfrak U$ contains a non-zero supersingular  $\mathfrak U$-element.
\end{proof}

   \begin{theorem}
When $a=0$,   an irreducible admissible supercuspidal $\mathbb C$-representation $\pi$ of a reductive $p$-adic group $G$ with a central character $\omega $ taking values in the ring of integers $O_E$ of a number field $E$, admits  a $P_E$-admissible $O_E$-structure, for all maximal ideals $P_E$ of $O_E$ containing $p$.
  \end{theorem}
  This is known when $p$ is replaced by a prime different from $p$, without restriction on $a$  for any reductive $p$-adic group \cite[**]{Viglivre}.
When $a=0$, for some discrete compact subgroup $\Gamma$ of $G$, the natural smooth representation $\rho_{\mathbb C, \omega}^\Gamma$ of $G$ on

 \begin{proposition} \label{prop B-inv}
The $\mathfrak B$-invariant functor   

- when $c\neq p$, induces a bijection between the isomorphism classes of irreducible admissible $C$-representations $\pi$ of $G$ with $\pi^{\mathfrak B}\neq 0$ and the simple right  $ H_{\mathbb C}(G,\mathfrak B)$-modules. 

- when $C=\mathbb C$, is an equivalence $\Mod_\mathbb C(G,\mathfrak B)\to \Mod_\mathbb C(H(G,\mathfrak B))$ of inverse $\mathfrak T$.
\end{proposition}
\begin{proof} This is well known. When $c\neq p$ \cite[I.**]{Viglivre}. When $C=\mathbb C$,  \cite{Borel} although in this reference  $\mathfrak B$ is replaced by $\tilde Z_0 \mathfrak B$, but this makes no difference in the proof.
\end{proof}

- e)  The  right $H_{ \mathbb C}(G,\mathfrak B)$-module $\pi^{\mathfrak B} $  is simple, and any simple right $H_{ \mathbb C}(G,\mathfrak B)$-module   is isomorphic to $\pi^{\mathfrak B}$ for some irreducible admissible $\pi$ unique modulo isomorphism \cite[I.9]{Viglivre}.